\documentclass[12pt]{article}
\usepackage{amsmath,amssymb,amsthm,amsfonts,amscd,euscript,verbatim, t1enc, newlfont, xypic} 
\usepackage{hyperref}
\usepackage{cancel} 
\hypersetup{breaklinks}

 \theoremstyle{definition}
\newtheorem{theo}{Theorem}[subsection]

\newtheorem{pr}[theo]{Proposition}

 \newtheorem{coro}[theo]{Corollary}
  
\theoremstyle{remark}
\newtheorem{rema}[theo]{Remark}

\theoremstyle{definition}
\newtheorem{defi}[theo]{Definition}

 \newcommand\lan{\langle}
\newcommand\ra{\rangle}
 
\newcommand\lo{\mathcal{LO}}
\newcommand\ro{\mathcal{RO}}
\newcommand\io{\mathcal{I}}
\newcommand\jo{\mathcal{J}}
\newcommand\cp{\mathcal{P}}

\newcommand\wchow{{w_{Chow}}}
\newcommand\wchowc{{w_{Chow}^c}}

\newcommand\tchow{{t_{Chow}}}

\newcommand\dmge{DM^{eff}_{gm}{}}

\newcommand\dmgm{DM_{gm}}

\newcommand\dm{DM}

\newcommand\dmc{DM^c}

\newcommand\hsing{H^{EM,\z}}
\newcommand\wsp{w^{sph}}

\newcommand\tr{Tr}

\newcommand\obj{\operatorname{Obj}}
\newcommand\mo{\operatorname{Mor}}
\newcommand\id{\operatorname{id}}
\newcommand\hu{\underline{H}}
\newcommand\cu{\underline{C}}
\newcommand\du{\underline{D}}
\newcommand\au{\underline{A}}
\newcommand\bu{\underline{B}}

\newcommand\eu{\underline{E}}

\newcommand\z{{\mathbb{Z}}}
\newcommand\q{{\mathbb{Q}}}

\newcommand\al{\alpha}

\newcommand\ns{\{0\}}

\newcommand\chow{Chow}

\newcommand\chowe{Chow^{eff}}
\newcommand\ab{Ab}
\newcommand\abfr{FAb}
\newcommand\lvect{L-\operatorname{vect}}

\newcommand\shtop{SH}

 \DeclareMathOperator\ke{\operatorname{Ker}}
 
\DeclareMathOperator\imm{\operatorname{Im}}
\DeclareMathOperator\co{\operatorname{Cone}}

\DeclareMathOperator\kar{\operatorname{Kar}}
\DeclareMathOperator\adfu{\operatorname{AddFun}}
\newcommand\hrt{{\underline{Ht}}}

\newcommand\wkar{\operatorname{Kar}^w_{\min}}

\newcommand\hw{{\underline{Hw}}}

\numberwithin{equation}{subsection}
\begin{document}

 \title{On morphisms killing weights,  weight complexes, and Eilenberg-Maclane (co)homology of spectra} 
  \author{Mikhail V. Bondarko
   \thanks{ 
Sections 1 and 2 of the paper were written under the support of the Russian Science Foundation grant no. 16-11-10200; section 3 was supported by the RFBR grant no. 15-01-03034-a and by Dmitry Zimin's Foundation ``Dynasty''.}}\maketitle

\begin{abstract}
The main goal of this paper is to study when a morphism $g$ in a triangulated category $\cu$ endowed with a weight structure  "kills certain weights" of objects (between  an integer $m$ and some $n\ge m$). 
If $g=\id_M$ (where $M\in \obj \cu$) and $\cu$ is Karoubian, then $g$ kills weights $m,\dots,n$ if and only if there exists a (weight) decomposition of $M$ that {\it avoids} these weights 
 (in the sense earlier defined by J. Wildeshaus).  

We prove the equivalence of several definitions for killing weights. In particular, we describe a family of cohomological functors that "detects" this notion. We also prove that $M$ is {\it without  weights $m,\dots, n$}  (i.e., a decomposition of $M$ avoiding these weights exists) if and only if this condition is fulfilled for its {\it weight complex} $t(M)$.

These results allow us to 
get new (stronger) results on the  conservativity  
 of the weight complex functor $t$. We study in detail the case  
$\cu=\shtop$ (endowed with the {\it spherical weight structure} whose heart consists of coproducts of sphere spectra);  the corresponding weight complex functor is just the one calculating the $H\z$-homology (whereas the terms of weight complexes are 
 free abelian groups). In this case  $g$ kills weights $m,\dots, n$ if and only if $H(g)=0$ for all $H$ represented by elements of $\shtop[m,n]$ (and 
$g$'s satisfying these conditions form an injective class in the sense essentially defined by J.D. Christensen; yet this class is not stable with respect to shifts). Moreover, for any spectrum $M$ there exists a "weakly universal decomposition" $P\to M\to I_0$ for  $I_0\in SH[m,n]$ and $P$ being without weights  $m,\dots,n$ (the latter condition has an easy description in terms of singular homology); thus we obtain a {\it torsion pair}. We also obtain a certain converse to the stable Hurewicz theorem.

\end{abstract}

\tableofcontents

 \section*{Introduction}

Recall that a weight structure $w$ (as independently defined in \cite{bws} and in \cite{konk}) is a tool for endowing objects of a triangulated category $\cu$ with certain weight filtrations; these filtrations yield  functorial ("weight") filtrations and spectral sequences for any (co)homology of these objects. 
In particular, if $w$ is (a version of) the Chow weight structure (as constructed in \cite{bws} and \cite{hebpo}; 
cf. also \S3 of \cite{bonspkar} for a certain list of weight structures of this type) for a certain motivic category then we obtain certain Deligne-type weights for \'etale and singular cohomology (see Remark 2.4.3 of \cite{bws} and Proposition 4.3.1 of \cite{bkl});  the weight structure theory also yields many more interesting filtrations and spectral sequences (including the Atiyah-Hirzebruch ones for the cohomology of spectra).

In a series of papers J. Wildeshaus has studied motives {\it without weights $m,\dots,n$} for $m\le n\in \z$. Those are motives fitting into distinguished triangles of the form $X\to M \to Y$ for $X$
  of weight at most $m-1$ and $Y$ of weight at least $n+1$ (i.e., 
$X\in \cu_{w\le m-1}$ and $Y\in \cu_{w\ge n+1}$ for $w$ being the corresponding Chow weight structure on this motivic category $\cu$). Note that $M$ determines 
its "components" $X$ and $Y$ functorially (in contrast to the case of "ordinary" weight decompositions where $n=m-1$); this yields a way of constructing  "new" (and interesting) motives out of old ones.
 If $M$ is without  weights $m,\dots,n$ (we write this as $M\in \cu_{w\notin[m,n]}$) then the corresponding factors of the weight filtration vanish for any cohomology of $M$. Somewhat amazingly, it is reasonable to expect the following interesting converse to this statement: Deligne's weights for \'etale cohomology "should detect" whether $M\in \cu_{w\notin[m,n]}$ 
 (for motives with rational coefficients; some interesting cases of this conjecture were established in \cite{wildn}, \cite{wildshim}, and  \cite{wild}). 

In the current paper we (mostly) study the condition of being without 
 weights  $m,\dots,n$ for arbitrary $(\cu,w)$. Our main tool is the study of those morphisms that {\it kill these  weights}, i.e.,  of $g\in \cu(M,N)$  "compatible with"  certain morphisms  $w_{\le n}M\to w_{\le m-1}N$ (we write this as $g\in\mo_{
\cancel{[m,n]}}\cu$).  We prove that this definition of killing weights  for $g$ is equivalent to several other ones. 
 $M$ is without weights  $m,\dots,n$ if and only if $\id_M\in\mo_{\cancel{[m,n]}}\cu$ 
 (if $\cu$ is Karoubian which is "usually" the case; we take this property  for the definition of 
 $\cu_{w\notin[m,n]}$ in general).  Certainly, if  $g$ kills weights $m,\dots,n$ then $H(g)$ kills the corresponding factors for any (co)homology $H$ on $\cu$. The converse is also true if one considers all representable $H$ here; unfortunately, \'etale cohomology is not sufficient for studying this property for motives. Yet we describe a certain class of cohomology theories (on $\cu$) such that $g\in\mo_{[\cancel{ m,n]}}\cu$ if and only if $H(g)=0$ for any theory  belonging to this collection. For $w$ being the {\it spherical} weight structure on $\shtop$ (the 
 stable homotopy category of spectra) one should take the theories represented by elements 
of $\shtop[m,n]$ (in the notation of \S3.2 of \cite{marg}, i.e., the homotopy groups of representing objects should vanish in all degrees $\notin[m,n]$; see \S\ref{sshtop} below). If $\cu=\dm$ (a "big" motivic category) then the  general theory gives representing objects belonging to $\dm[m,n]=\dm^{\tchow\le -m}\cap \dm^{\tchow\ge -n}$ (see Remark \ref{rkwmot}(3)); this is somewhat less satisfactory.

We also improve significantly our understanding of the weight complex functor $t$ (from $\cu$ into a certain "weak category of complexes" $K_w(\hw)$; weight complexes of this sort were defined in \S2.2 of \cite{bpure} following \S3 of \cite{bws}) in this paper.  Here $\hw\subset \cu$ is the additive category of objects of weight zero (i.e., $\obj \hw=\cu_{w\le 0}\cap \cu_{w\ge 0}$); so it would be reasonable to expect that $t$ (along with weight spectral sequences) "sees" the "finite weight part" of $\cu$ and ignores  "infinitely small and infinitely large weights". Our current methods yield  very precise statements of this sort; we also describe in detail the defect for $t$ to "detect" $\cu_{w\le 0}$ and $\cu_{w\ge 0}$ (see Theorem \ref{tdegen}). In particular, we prove that the singular homology of a spectrum 
 $M\in \obj \shtop$ is concentrated in non-negative degrees (in the "usual" notation; in our current notation we write $\hsing_i(M)=\ns$ for $i>0$) if and only if $M$ is an extension of a $-1$-connective spectrum by an acyclic one; this is a certain converse to the stable Hurewicz theorem (see Remark \ref{rdwss}(\ref{icsh})). 
 Moreover, $t(M)$ sees whether  $M\in \cu_{w\notin[m,n]}$. 

Now we discuss the relation of our definitions to the notion of an {\it injective  class of morphisms} (that is dual to the notion of a {\it projective class} that was central in \cite{christ}; injective classes are  "somewhat closer" to    $\mo_{\cancel{[m,n]}}\cu$ in our main examples).  By definition (see Definition \ref{dhop}(2) 
 below),  any injective class can be described as $\{f:\ H(f)=0\}$ for $H$ running through a certain family of cohomological functors $\cu\to \ab$; the same is 
  true for $\mo_{\cancel{[m,n]}}\cu$ by Theorem \ref{tkwhom}(I).  Yet in contrast to the definition of injective classes, we do not have to demand that all these $H$ are representable (one may say that the existence of weight decompositions gives a substitute for this representability condition along with the  corresponding version of the existence of enough injective objects; cf. 
	\S2.3  of  \cite{christ}). Note however that for $\cu=\dm$ or $\cu=\shtop$ (and also for $\cu=D(\au)$ for $\au$ being an abelian category with enough projectives; see Remark \ref{rtst}(5)) 
	the classes $\mo_{\cancel{[m,n]}}\cu$ are  injective (since a $t$-structure 
{\it adjacent} to $w$ exists in this case; see Proposition \ref{ptp}(2)). 
	Yet they are certainly not shift-stable in contrast to the main projective classes 
	 studied by J.D. Christensen (see the beginning of \S3 of ibid.); this distinction of our focus of study from the 
	one of ibid. is possibly even more important.
	One may say that our definition of $\mo_{\cancel{[m,n]}}\cu$   is more flexible and "takes into account filtrations". In particular, 
if 
 a $t$-structure  adjacent to $w$ exists then for any $l\ge 0$ the morphism class $\cap_{m\in \z}\mo_{\cancel{[m,m+l]}}\cu$ is shift-stable and injective; 
 yet one certainly cannot recover single 
 $\mo_{\cancel{[m,m+l]}}\cu$'s from this intersection. 
 Respectively, no reasonable analogues of 
 $\cu_{w\notin[m,n]}$ can be described using  shift-stable injective classes of $\cu$-morphisms. Still, our Theorem \ref{tprkw}(\ref{iprkwcomp}) is rather similar to Proposition 3.3 of ibid.\footnote{Note that the proof of loc. cit. actually does not require shift-stability of the corresponding ideal.} 
 Moreover, the aforementioned intersection construction yields an  interesting  (shift-stable) injective class in the case   $l=0$ 
since $\cap_{m\in \z}\mo_{\cancel{[m,m]}}\cu$ equals $\{g\in \mo\cu:\ t(g)=0\}$.

Furthermore, if a $t$-structure adjacent to $w$ exists then for any  $M\in \obj \cu$ there exists a "weakly universal decomposition" $P\to M\to I_0$ for  $I_0\in \cu[m,n]$ (see Definition \ref{dtstr}(II,III)) and $P\in \cu_{w\notin[m,n]}$. Thus $\cu_{w\notin[m,n]}$ and  $\cu[m,n]$ yield a {\it torsion pair} in the sense of \cite[Definition 1.2.2]{bpure}  (see  
see Proposition \ref{ptp}(2) below for more detail).  
This is also an important notion; in particular,  for $\cu=\shtop$ it may be interesting study the "interaction" of the ideals of morphisms characterized by the condition $\pi_i(g)=0$ (for $i$ running through $\z$; the intersection of all these classes is the class of {\it ghost morphisms}  defined in \S7 of \cite{christ}).
	
  Now we describe the contents of the paper in more detail; some more information of this sort can also be found at the beginnings of sections. 
  
  In \S\ref{sold} we recall some basics on weight structures; only a few (somewhat technical) statements are new here. The reader not much interested in compactly generated triangulated categories (such as $\shtop$), in projective and injective classes of morphisms, and in 
	 torsion  pairs may probably ignore \S\ref{svtt}. 
  
  In \S\ref{snews} we introduce our main definitions  of morphisms killing weights $m,\dots,n$ and of objects without these weights.
	For $\cu=K^b(\lvect)$ (or $=K(\lvect)$) and the stupid weight structure for this category 
	we have $g\in \mo_{\cancel{[m,n]}}\cu$ if and only if $H_i(g)=0$ 
	 for $-n\le i\le -m$ (in our numbering of homology and weights); yet the general definition is somewhat more complicated.
	We also prove 
	 several interesting properties of our notions; 
 we sometimes 
	 demand   $\cu$ to be Karoubian (i.e., if all idempotent $\cu$-endomorphisms yield  splittings of objects in it) in the formulations of this section. In particular,  our notion of being without weights $m,\dots,n$ in a non-Karoubian $\cu$  is somewhat more general than the one used by Wildeshaus.  
	We closely relate our main notions to the weight complex functor $t$; these results imply (in the Karoubian case) that $t(g)$ is an isomorphism if and only if $t(g)$ is an extension of an object of "infinitely large weight"  by a one of infinitely small weight. We (essentially) call  extensions of the latter type {\it weight-degenerate objects} and study them in detail.
	 We study in detail the case $\cu=\shtop$; for the corresponding {\it spherical} weight structure  the weight complex functor yields complexes of free abelian groups computing the $H\z$-homology of spectra, 
	cellular filtrations of spectra yield their weight Postnikov towers, and   weight spectral sequences are Atiyah-Hirzebruch ones. 
	We also relate the main subjects of this paper to  to {\it torsion pairs} and {\it injective classes} (under the assumption that there exists a $t$-structure adjacent to $w$; we recall that some statement on the existence of an adjacent $t$ were proved in \cite{bpure}).
  
	 In \S\ref{ssupl} we use the results of \cite{bonspkar} to generalizing the results of the previous section to not (necessarily) Karoubian triangulated categories.
	One of the results that we obtain is  crucial for \cite{binters}.
We also construct certain examples illustrating the distinctions between the non-Karoubian and the Karoubian case.


The author 
is deeply grateful to prof. J. Daniel Christensen and to prof. J\"org Wildeshaus for their very interesting remarks on the contents of the paper. 
He would also  like to express his gratitude to
 prof. Fernando Muro,  prof. Thomas Goodwillie, prof. Tyler Lawson, prof.   Qiaochu Yuan, and to  other users of the Mathoverflow forum for their very useful comments.

\section{Notation and a reminder on  weight structures  }\label{sold}

In this section we 
recall (a part of) the theory of  weight structures.

In \S\ref{snotata} we introduce some 
notation and 
conventions.

In \S\ref{ssws} we  recall some basics on weight structures. The only new statement of this section is Proposition \ref{pbw}(\ref{icompidemp}) (it is rather technical but quite important for this paper).

In \S\ref{sswc} we recall some 
properties of  weight complex functors. All of them except parts \ref{irwc3} and \ref{irwc6}  of Remark \ref{rwc} were  established in \cite{bws}; cf. also \S2.4 of \cite{bpure}.

In \S\ref{svtt} we discuss the relation of weight structures to cohomology; so we recall the (somewhat more complicated) notions of weight filtrations, weight range, virtual $t$-truncations, and adjacent structures. The reader not much interested in compactly generated triangulated categories (such as $\shtop$), torsion pairs, and  injective classes of morphisms may ignore this section (along with the remarks that mention it later in the text) at the first reading of the paper. 

  \subsection{Some notation and conventions}\label{snotata}
  
For categories $C,D$ we write 
$D\subset C$ if $D$ is a full 
subcategory of $C$.

Given a category $C$ and  $X,Y\in\obj C$ we  will write $C(X,Y)$ for  the set of morphisms from $X$ to $Y$ in $C$.
We will say that $X$ is  a {\it
retract} of $Y$ if $\id_X$ can be factored through $Y$. Note that if $C$ is triangulated or abelian 
then $X$ is a  retract of
$Y$ if and only if $X$ is its direct summand.

For a category $C$ the symbol $C^{op}$ will denote its opposite category.

For a subcategory $D\subset C$
we will say that $D$ is {\it Karoubi-closed} in $C$ if it contains all retracts of its objects in $C$. We will call the smallest Karoubi-closed subcategory $\kar_C(D)$ of $C$ containing $D$  the {\it Karoubi-closure} of $D$ in $C$. 

The {\it idempotent  completion} $\kar(\bu)$ (no lower index) of an additive
category $\bu$ is the category of "formal images" of idempotents in $\bu$
(so $\bu$ is embedded into an idempotent complete category). 

$\cu$ and $\cu'$ will usually denote some triangulated categories.
 We will use the
term {\it exact functor} for a functor of triangulated categories (i.e.,
for a  functor that preserves the structures of triangulated
categories).

For a distinguished triangle $A\to B\to C$  in $\cu$ we will say that $B$ is an {\it extension} of $C$ by $A$.

We will say that a class  $D\subset \obj \cu$ {\it strongly generates} a subcategory $\du\subset \cu$ and write $\du=\lan D\ra_{\cu}$ if $\du$ is the smallest  strictly full triangulated subcategory of $\cu$ such that $D\subset  \obj \du$. Certainly, here we can consider the case $\du=\cu$.


For $X,Y\in \obj \cu$ we will write $X\perp Y$ if $\cu(X,Y)=\ns$.
For $D,E\subset \obj \cu$ we  write $D\perp E$ if $X\perp Y$
 for all $X\in D,\ Y\in E$.
For $D\subset \obj \cu$ we  denote by $D^\perp$ the class
$$\{Y\in \obj \cu:\ X\perp Y\ \forall X\in D\}.$$
  Dually, ${}^\perp{}D$ is the class
$\{Y\in \obj \cu:\ Y\perp X\ \forall X\in D\}$. 

We will say that certain $C_i\in \obj \cu$ {\it Hom-generate} $\cu$ if $\{C_i[j]:\ j\in \z\}^{\perp}=\ns$.

Below $\au$ will always  denote some abelian category; $\bu$ is an additive category.

In this paper all complexes will be cohomological, i.e., the degree of
all differentials is $+1$; respectively, we will use cohomological
notation for their terms. 
We  denote by $K(\bu)$ the homotopy category
of (cohomological) complexes over $\bu$. Its full subcategory of
bounded complexes will be denoted by $K^b(\bu)$. We will write $M=(M^i)$ if $M^i$ are the terms of a complex $M$; $f^i$ will denote the $i$th component of a morphism of complexes $f$. 
If we will say that an arrow (or a sequence of arrows) in $\au$
yields an object of $K^b(\bu)$, we will mean by default  that the last object of
this sequence is in degree $0$. We will always extend a ``finite'' $\bu$-complex by $0$'s to $\pm \infty$
	(to obtain an object of $K^b(\bu)\subset K(\bu)$).

We will call a contravariant additive functor $\cu\to \au$
for an abelian $\au$ {\it cohomological} if it converts distinguished
triangles into long exact sequences. For a cohomological $F$ we will denote $F\circ [-i]$ by $F^i$.

For $I\in \obj \cu$ we will denote the cohomological functor $\cu(-,I)$ (from $\cu$ into $\ab$) by $H_I$.

On the other hand, we will call a covariant functor $F$ satisfying this condition a {\it homological} one or just homology; we will write $F_I$ for the composition $F\circ [i]$. 
So, for an $\au$-complex $(M^i,d^i:M^i\to M^{i+1})$ the object $\ke(d^{i})/\imm d^{i-1}$ is the $i$th homology $H_i(M)$. This convention is compatible with the previous papers of the author; yet it forces us to use a somewhat weird numbering for $H\z$-homology of spectra in \S\ref{sshtop}.

Let $\cu$ be a triangulated category closed with respect to  
coproducts, $B\subset \obj \cu$. 
Then an object $M$ of $\cu$ is said to be {\it compact} if the functor $\cu(M,-)$ commutes with all small coproducts.

$L$ will always be an arbitrary (fixed) field. $\lvect$ will denote the category of finite dimensional $L$-vector spaces.

\subsection{Weight structures: basics}\label{ssws}

\begin{defi}\label{dwstr}

I. A pair of subclasses $\cu_{w\le 0},\cu_{w\ge 0}\subset\obj \cu$ 
will be said to define a weight
structure $w$ for a triangulated category  $\cu$ if 
they  satisfy the following conditions.

(i) $\cu_{w\ge 0},\cu_{w\le 0}$ are 
Karoubi-closed in $\cu$
(i.e., contain all $\cu$-retracts of their objects).

(ii) {\bf Semi-invariance with respect to translations.}

$\cu_{w\le 0}\subset \cu_{w\le 0}[1]$, $\cu_{w\ge 0}[1]\subset
\cu_{w\ge 0}$.

(iii) {\bf Orthogonality.}

$\cu_{w\le 0}\perp \cu_{w\ge 0}[1]$.

(iv) {\bf Weight decompositions}.

 For any $M\in\obj \cu$ there
exists a distinguished triangle
\begin{equation}\label{wd}
X\to M\to Y
{\to} X[1]
\end{equation} 
such that $X\in \cu_{w\le 0},\  Y\in \cu_{w\ge 0}[1]$.

II. The category $\hw\subset \cu$ whose objects are
$\cu_{w=0}=\cu_{w\ge 0}\cap \cu_{w\le 0}$ and morphisms are $\hw(Z,T)=\cu(Z,T)$ for
$Z,T\in \cu_{w=0}$,
 is called the {\it heart} of 
$w$.

III. $\cu_{w\ge i}$ (resp. $\cu_{w\le i}$, resp.
$\cu_{w= i}$) will denote $\cu_{w\ge
0}[i]$ (resp. $\cu_{w\le 0}[i]$, resp. $\cu_{w= 0}[i]$).

IV.  The class $\cu_{w\ge i}\cap \cu_{w\le j}$ will be denoted by $\cu_{[i,j]}$ (so it equals $\ns$ if $i>j$).

$\cu^b\subset \cu$ will be the category whose object class is $\cup_{i,j\in \z}\cu_{[i,j]}$.

V. We will  say that $(\cu,w)$ is {\it  bounded}  if $\cu^b=\cu$ (i.e., if
$\cup_{i\in \z} \cu_{w\le i}=\obj \cu=\cup_{i\in \z} \cu_{w\ge i}$).

Respectively, we will call $\cup_{i\in \z} \cu_{w\le i}$ (resp. $\cup_{i\in \z}\cu_{w\ge
i}$) the class of $w$-{\it bounded above} (resp. $w$-{\it bounded below}) objects; we will say that $w$ is bounded above (resp. bounded below) if all the objects of $\cu$ satisfy this property.

VI. Let $\cu$ and $\cu'$ 
be triangulated categories endowed with
weight structures $w$ and
 $w'$, respectively; let $F:\cu\to \cu'$ be an exact functor.

$F$ is said to be  
{\it  weight-exact} 
(with respect to $w,w'$) if it maps
$\cu_{w\le 0}$ into $\cu'_{w'\le 0}$ and
maps $\cu_{w\ge 0}$ into $\cu'_{w'\ge 0}$.

VII. Let $\bu$ be a 
full subcategory of a triangulated category $\cu$.

We will say that $\bu$ is {\it negative} if
 $\obj \bu\perp (\cup_{i>0}\obj (\bu[i]))$.
\end{defi}

\begin{rema}\label{rstws}

1. A  simple (though quite useful for this paper) example of a weight structure comes from the stupid
filtration on 
$K(\bu)$ (or on $K^b(\bu)$) for an arbitrary additive category
 $\bu$. 
In this case
$K(\bu)_{w\le 0}$ (resp. $K(\bu)_{w\ge 0}$) will be the class of complexes that are
homotopy equivalent to complexes
 concentrated in degrees $\ge 0$ (resp. $\le 0$); see Remark 1.2.3(1) of \cite{bonspkar}.  
 
 The heart of this {\it stupid weight structure} 
is the Karoubi-closure  of $\bu$
 in 
 $K(\bu)$. 

2. A weight decomposition (of any $M\in \obj\cu$) is (almost) never canonical. 

Yet for  $m\in \z$ we will often need some choice of a weight decomposition of $M[-m]$ shifted by $[m]$. So we obtain a distinguished triangle \begin{equation}\label{ewd} w_{\le m}M\to M\to w_{\ge m+1}M \end{equation} 
with some $ w_{\ge m+1}M\in \cu_{w\ge m+1}$, $ w_{\le m}M\in \cu_{w\le m}$; we will call it an {\it $m$-weight decomposition} of $M$.  

 We will often use this notation below (though $w_{\ge m+1}M$ and $ w_{\le m}M$ are not canonically determined by $M$). Moreover, when we will write arrows of the type $w_{\le m}M\to M$ or $M\to w_{\ge m+1}M$ we will always assume that they come from some $m$-weight decomposition of $M$. 
 
3. In the current paper we use the ``homological convention'' for weight structures; 
it was previously used in \cite{wild}, \cite{hebpo}, \cite{brelmot}, \cite{wildat},  \cite{wildn}, \cite{bmm}, \cite{bpure}, 
 \cite{wildshim},  
 and \cite{bonspkar} 
 whereas in 
\cite{bws} and  \cite{bger} 
 the ``cohomological convention'' was used. In the latter convention 
the roles of $\cu_{w\le 0}$ and $\cu_{w\ge 0}$ are interchanged, i.e., one
considers   $\cu^{w\le 0}=\cu_{w\ge 0}$ and $\cu^{w\ge 0}=\cu_{w\le 0}$. So,  a
complex $X\in \obj K(\au)$ whose only non-zero term is the fifth one (i.e.,
$X^5\neq 0$) has weight $-5$ in the homological convention, and has weight $5$
in the cohomological convention. Thus the conventions differ by ``signs of
weights''; 
 $K(\au)_{[i,j]}$ is the class of retracts of complexes concentrated in degrees
 $[-j,-i]$. 
 
  4. The orthogonality axiom in Definition \ref{dwstr}(I) immediately yields that $\hw$ is negative in $\cu$.
 We will mention a certain converse to this statement  below.

\end{rema}

Let us recall some basic  properties of weight structures. 
Starting from this moment we will assume that $\cu$ is (a triangulated category) endowed with a (fixed) weight structure $w$.

\begin{pr} \label{pbw}
Let  
$m\le l\in\z$, $M,M'\in \obj \cu$, $g\in \cu(M,M')$. 

\begin{enumerate}
\item \label{idual}
The axiomatics of weight structures is self-dual, i.e., for $\du=\cu^{op}$
(so $\obj\du=\obj\cu$) there exists the (opposite)  weight
structure $w^{op}$ for which $\du_{w^{op}\le 0}=\cu_{w\ge 0}$ and
$\du_{w^{op}\ge 0}=\cu_{w\le 0}$.

 \item\label{iort}
 $\cu_{w\ge 0}=(\cu_{w\le -1})^{\perp}$ and $\cu_{w\le -1}={}^{\perp} \cu_{w\ge 1}$.

\item\label{iext} 
 $\cu_{w\le 0}$, $\cu_{w\ge 0}$, and $\cu_{w=0}$
are additive. 

\item\label{iadd} A direct sum of (a finite collection of) $m$-weight decompositions of any $M_i\in \obj \cu$ is an $m$-weight decomposition of $\bigoplus M_i$.

\item\label{icompl} 
				For any (fixed) $m$-weight decomposition of $M$ and an $l$-weight decomposition of $M$  (see Remark \ref{rstws}(2))
 $g$ can be extended 
to a 
morphism of the corresponding distinguished triangles:
 \begin{equation}\label{ecompl} \begin{CD} w_{\le m} M@>{c}>>
M@>{}>> w_{\ge m+1}M\\
@VV{h}V@VV{g}V@ VV{j}V \\
w_{\le l} M'@>{}>>
M'@>{}>> w_{\ge l+1}M' \end{CD}
\end{equation}

Moreover, if $m<l$ then this extension is unique (provided that the rows are fixed).

\item\label{icompidemp} 
Assume that we are given a diagram of the form (\ref{ecompl}) and its rows are equal (so,
$M'=M$, $m=l$,  $w_{\le m} M= w_{\le l} M'$); also suppose that $g=\id_M$ and $h$ is an idempotent endomorphism, whereas $\cu$ is Karoubian. Then 
for the decomposition $w_{\le m} M\cong M_1\bigoplus M_0$ 
corresponding to $h$ (i.e., $h$ projects $w_{\le m} M$ onto $M_1$) 
we have $M_0\in \cu_{w=m}$, whereas the upper row of (\ref{ecompl}) 
 can be presented as the direct sum of a certain $m$-weight decomposition $M_1\to M\to M_2$ and of the distinguished triangle $M_0\to 0 \to M_0[1]$.

\item\label{ifact} Assume 
 $M'\in   \cu_{w\ge m}$. Then any $g\in \cu(M,M')$ factors through $w_{\ge m}M$ (for any choice of the latter object).

 \item\label{iwd0} 
 If $M\in \cu_{w\ge m}$ 
 then $w_{\le l}M\in \cu_{[m,l]}$ (for any $l$-weight decomposition of $M$). 
Dually, if  $M\in \cu_{w\le l}$ 
 then $w_{\ge m}M\in \cu_{[m,l]}$. 

 \item\label{ifacth}
Let $M\in \cu_{w\le 0}$,  $N\in \cu_{w\ge 0}$, and fix some weight decompositions $X_1[1]{\to} M[1]\stackrel{f[1]}{\to}   Y_1[1]$ 
and $X_2\stackrel{g}{\to}   N\to Y$ of $M[1]$ and $N$, respectively. Then $Y_1,X_2\in \cu_{w=0}$ and
any  morphism from $M$ into $N$  can be presented as $g\circ h\circ f$ for some $h\in \cu(Y_1,X_2)$.
\end{enumerate}
\end{pr}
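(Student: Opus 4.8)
The plan is to identify the two weight decompositions concretely and then factor an arbitrary $\phi\in\cu(M,N)$ in two stages, passing through the heart objects $Y_1$ and $X_2$. Shifting the chosen weight decomposition of $M[1]$ by $[-1]$ produces a distinguished triangle $X_1\xrightarrow{e}M\xrightarrow{f}Y_1\to X_1[1]$; since $X_1[1]\in\cu_{w\le 0}$ and $Y_1[1]\in\cu_{w\ge 1}$, this is a $(-1)$-weight decomposition of $M$, with $X_1=w_{\le -1}M\in\cu_{w\le -1}$ and $Y_1=w_{\ge 0}M\in\cu_{w\ge 0}$. Because $M\in\cu_{w\le 0}$, the dual part of (\ref{iwd0}) gives $Y_1\in\cu_{[0,0]}=\cu_{w=0}$. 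Symmetrically, $X_2\xrightarrow{g}N\xrightarrow{p}Y\to X_2[1]$ is a $0$-weight decomposition of $N$, so $X_2\in\cu_{w\le 0}$ and $Y\in\cu_{w\ge 1}$; since $N\in\cu_{w\ge 0}$, the first part of (\ref{iwd0}) gives $X_2\in\cu_{[0,0]}=\cu_{w=0}$. This settles the claim $Y_1,X_2\in\cu_{w=0}$.

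For the factorization, fix $\phi\in\cu(M,N)$. First I would factor $\phi$ through $f$: applying the cohomological functor $\cu(-,N)$ to the triangle above yields the exact sequence $\cu(Y_1,N)\xrightarrow{f^*}\cu(M,N)\xrightarrow{e^*}\cu(X_1,N)$, so it suffices to check $\phi\circ e=0$. This is automatic, since $X_1\in\cu_{w\le -1}$ and $N\in\cu_{w\ge 0}$ give $\cu(X_1,N)=\ns$ by orthogonality (equivalently by (\ref{iort})); hence $\phi=\phi'\circ f$ for some $\phi'\in\cu(Y_1,N)$. Next I would factor $\phi'$ through $g$: applying $\cu(Y_1,-)$ to the weight decomposition of $N$ gives $\cu(Y_1,X_2)\xrightarrow{g_*}\cu(Y_1,N)\xrightarrow{p_*}\cu(Y_1,Y)$, so it suffices to check $p\circ\phi'=0$. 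Again this is automatic, since $Y_1\in\cu_{w=0}\subset\cu_{w\le 0}$ and $Y\in\cu_{w\ge 1}$, whence $\cu(Y_1,Y)=\ns$ by the orthogonality axiom. Thus $\phi'=g\circ h$ for some $h\in\cu(Y_1,X_2)$, and combining the two steps gives $\phi=\phi'\circ f=g\circ h\circ f$, as required.

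I do not expect a genuine obstacle here: the argument is a two-fold application of orthogonality together with the long exact sequences attached to the two distinguished triangles. The points that require care are purely organizational — recognizing that shifting the weight decomposition of $M[1]$ yields a $(-1)$-weight decomposition of $M$ (so that $Y_1=w_{\ge 0}M$ lands in the heart rather than in strictly positive weights), and keeping the two distinct orthogonality relations $\cu_{w\le -1}\perp\cu_{w\ge 0}$ and $\cu_{w\le 0}\perp\cu_{w\ge 1}$ apart. It is worth emphasizing that the membership $Y_1\in\cu_{w\le 0}$ established in the first step is exactly what powers the second factorization, so the two assertions of the statement are genuinely linked rather than independent.
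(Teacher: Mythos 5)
Your argument for assertion (\ref{ifacth}) is correct, and it is essentially the paper's own route: you obtain $Y_1,X_2\in\cu_{w=0}$ from assertion (\ref{iwd0}) exactly as the paper does, and where the paper says the factorization "follows from assertion (\ref{ifact}) combined with its dual", you simply inline those two applications as explicit long-exact-sequence-plus-orthogonality arguments ($\cu(X_1,N)=\ns$ since $X_1\in\cu_{w\le -1}$ and $N\in\cu_{w\ge 0}$, giving $\phi=\phi'\circ f$; then $\cu(Y_1,Y)=\ns$ since $Y_1\in\cu_{w\le 0}$ and $Y\in\cu_{w\ge 1}$, giving $\phi'=g\circ h$). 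That unfolding is accurate, including the point that the membership $Y_1\in\cu_{w=0}$ established in the first step is what makes the second step work.

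The genuine gap is coverage: the statement is the whole of Proposition \ref{pbw}, which has nine assertions, and your proposal proves only the ninth. The paper disposes of assertions (\ref{idual}), (\ref{iort}), (\ref{iext}), (\ref{icompl}), and (\ref{iwd0}) by citation to \cite{bws}, derives (\ref{iadd}) from (\ref{iext}) and (\ref{ifact}) from (\ref{icompl}), and gives a genuine argument for (\ref{icompidemp}) --- the one assertion the paper singles out as new and "quite important", since it later powers Theorem \ref{tprkw}(\ref{iwildef2}) and Proposition \ref{pwkar}. Your proposal says nothing about any of these, and (\ref{icompidemp}) in particular cannot be waved away: one must use the Karoubian hypothesis to split $w_{\le m}M\cong M_1\bigoplus M_0$ along the idempotent $h$, use the commutativity $c=c\circ h$ of the left-hand square of (\ref{ecompl}) to see that $c$ kills $M_0$ and hence that the upper row splits off the triangle $M_0\to 0\to M_0[1]$, and then use the Karoubi-closedness of $\cu_{w\le m}$ and $\cu_{w\ge m+1}$ to place $M_1\in\cu_{w\le m}$, $M_2\in\cu_{w\ge m+1}$, and $M_0\in\cu_{w\le m}\cap\cu_{w\ge m+1}[-1]=\cu_{w=m}$. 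As written, your text could replace only the final sentence of the paper's proof, not the proof of the proposition.
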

\begin{proof}

Assertions \ref{idual}, \ref{iort}, \ref{iext}, 
 \ref{icompl}, and \ref{iwd0}, 
 were proved  
in \cite{bws} (cf.  Remark 1.2.3(4) of \cite{bonspkar} and pay attention to Remark
\ref{rstws}(3) above!). 

Assertion \ref{iadd} follows from assertion \ref{iext} immediately (since  direct sums of distinguished triangles are distinguished).

To prove assertion \ref{icompidemp} we note first that $h$ does yield a certain splitting of $w_{\le m}M$ since $\cu$ is Karoubian. Next, since (\ref{ecompl}) is commutative, we obtain that $c$ factors through $M_1$. Hence the upper row of (\ref{ecompl}) can be decomposed into a direct sum of  the distinguished triangle $M_0\to 0 \to M_0[1]$ with a certain triangle $M_1\to M\to M_2$. Lastly, $M_1\in \cu_{w\le m}$ and $M_2\in \cu_{w\ge m+1}$ (since  $\cu_{w\le m}$ and $\cu_{w\ge m+1}$ are Karoubi-closed in $\cu$), whereas $M_0\in \cu_{w\le m}\cap \cu_{w\ge m+1}[-1]=\cu_{w=m}$.

Assertion \ref{ifact} follows from assertion \ref{icompl} immediately. 

Lastly, the assumptions of  assertion \ref{ifacth} imply that $Y_1,X_2\in \cu_{w=0}$ according to 
 assertion \ref{iwd0}. The rest of 
 the assertion easily  follows from assertion \ref{ifact} combined with its dual. 

\end{proof}

\begin{rema}\label{rcompl}
Diagrams of the form (\ref{ecompl}) (also in the case $l<m$) are 
 crucial for this paper.

1. An important 
type of this diagrams is the one with $g=\id_M$ (for $M'=M$; cf. part \ref{icompidemp} of the proposition). Note that for $m<l$ the corresponding connecting morphisms in  (\ref{ecompl}) are certainly unique (provided that the rows are fixed);
if $m=l$ then we obtain a certain  (non-unique) "modification" of  an $m$-weight decomposition diagram.

2. One  can "compose"  diagrams of the form (\ref{ecompl}), i.e., for any $q\in \cu(M',M'')$, $k\in \z$, and a morphism of triangles  of the form  
$$\begin{CD} w_{\le l} M'@>{}>>
M'@>{}>> w_{\ge l+1}M'\\
@VV{}V@VV{q}V@ VV{}V \\
w_{\le k} M''@>{}>>
M''@>{}>> w_{\ge k+1}M'' \end{CD}$$
one can compose its vertical arrows with the ones of (\ref{ecompl}) to obtain 
a morphism of distinguished triangles 
$$ \begin{CD} w_{\le m} M@>{c}>>
M@>{}>> w_{\ge m+1}M\\
@VV{}V@VV{q\circ g}V@ VV{}V \\
w_{\le k} M''@>{}>>
M''@>{}>> w_{\ge k+1}M'' \end{CD}$$ 
Note that one does not have to assume $k\ge l$ here (and $l\ge m$ also is not necessary provided that the existence of  (\ref{ecompl}) is known in this case).
Anyway,
 if $k>m$ then 
the composed diagram obtained this way is the only possible 
morphism of triangles compatible with $q\circ g$.

3. Note also that (\ref{ecompl}) can certainly be recovered from its left or right hand square.
\end{rema}

\subsection{On weight complexes } 
\label{sswc}

\begin{defi}\label{dwc}
For an object $M$ of $\cu$ (where $\cu$ is endowed with a weight structure $w$) choose some $w_{\le l}M$ (see Remark \ref{rstws}(2)) for all $l\in \z$.  
For all $l\in \z$ 
connect $w_{\le l-1} M$ with  $w_{\le l} M$ using Proposition \ref{pbw}(\ref{icompl}) (i.e., we consider those 
 unique connecting morphisms that are compatible with $\id_M$; see  Remark \ref{rcompl}(1)). Next,
 take the corresponding triangles 
 \begin{equation}\label{wdeck3}
  w_{\le l-1} M \to  w_{\le l} M \to M^{-l}[l]
 \end{equation}
 (so, we just introduce the notation for the corresponding cones). All of these triangles together with the corresponding morphisms   $ w_{\le l} M\to M$ are called a choice of a {\it weight Postnikov tower} for $M$, whereas the objects $M^i$ together with the morphisms connecting them (obtained by composing the morphisms $M^{-l}\to   (w_{\le l-1} M)[1-l]\to M^{-l+1}$ that come from two consecutive triangles of the type (\ref{wdeck3})) will be denoted by $t(M)$ and is said to be  a choice of a {\it weight complex} for $M$.

Respectively, for some $M,M'\in \obj\cu$, $g\in \cu(M,M')$, and some choices of their weight complexes we will say that a collection of arrows  between the terms of these 
 complexes is a choice of $t(g)$ whenever these arrows come from some morphism of the corresponding weight Postnikov towers 
 that is compatible with $g$. 

\end{defi}

Let us recall some basic properties of weight complexes.

\begin{pr}\label{pbwcomp}
Let $M,M'\in \obj \cu$, $g\in \cu(M,M')$ (where $\cu$ is endowed with a weight structure $w$).

Then the following statements are valid.

\begin{enumerate}
\item\label{iwc0} Any choice of 
$t(M)=(M^i)$ is a complex indeed (i.e., the square of the boundary is zero); all $M^i$ belong to $\cu_{w=0}$.

\item\label{iwc1} Any choice of $t(g)$ is a $C(\hw)$-morphism from the corresponding $t(M)$ to $t(M')$.

\item\label{iwc3} $M$ determines its weight complex $t(M)$ up to  homotopy equivalence.
In particular, if $M\in \cu_{w\ge 0}$, then any choice of 
$t(M)$ is $K(\hw)$-isomorphic to 
a complex with non-zero terms in non-positive degrees only; if $M\in \cu_{w\le 0}$
then $t(M)$ is isomorphic to a complex with non-zero terms in non-negative degrees only.

\item\label{iwcfu} $g$ determines its weight complex $t(g)$ up to the following {\it weak homotopy} equivalence relation: for $\hw$-complexes $A,B$ and morphisms $m_1,m_2\in C(\hw)(A,B)$  we write $m_1\backsim m_2$ if $m_1-m_2=d_Bh+jd_A$ for some collections of arrows $j^*,h^*:A^*\to B^{*-1}$.

Moreover, this equivalence relation is respected by compositions, and so considering morphisms in $K(\hw)$ modulo this relation we obtain an additive category $K_w(\hw)$.

\item\label{iwccone} 
There exist choices of  $t(M)$, $t(M')$, and (a compatible choice of) $t(g)$ such that the cone of $t(g)$ is a choice of a
weight complex of $\co (g)$.

\item\label{iwcfunct} Let $\cu'$ be a triangulated category endowed with a weight structure $w'$; let
 $F:\cu\to \cu'$ be a weight-exact functor. Then for any choice of $t(M)$ (resp. of $t(g)$) the complex $F(M^i)$ (resp. of $F_*(t(g))$) is a weight complex of $F(M)$ (resp. a choice of $t(F(g))$) with respect to $w'$.
\end{enumerate} 

\end{pr}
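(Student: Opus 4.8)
The plan is to reduce every assertion to the manipulation of weight Postnikov towers; all of this is carried out in \cite{bws}, so below I indicate only the load-bearing steps. First I would treat \ref{iwc0}. That each term $M^i$ lies in $\cu_{w=0}$ amounts to showing that the cone $M^{-l}[l]$ of the connecting map $w_{\le l-1}M\to w_{\le l}M$ lies in $\cu_{[l,l]}$. I would obtain this by applying the octahedral axiom to the composable pair $w_{\le l-1}M\to w_{\le l}M\to M$ (the maps being the ones compatible with $\id_M$ furnished by Proposition~\ref{pbw}(\ref{icompl})); the resulting octahedron exhibits $M^{-l}[l]\in\cu_{w\le l}\cap\cu_{w\ge l}=\cu_{[l,l]}$, so $M^{-l}\in\cu_{w=0}$. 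For $d^2=0$ I would note that $d^{i+1}d^i$ contains, as an inner factor, a composite which after a shift is a pair of consecutive morphisms of the triangle (\ref{wdeck3}) at level $l-1$, hence vanishes. Assertion \ref{iwc1} is then formal: a morphism of weight Postnikov towers consists of commuting squares on every floor, and the differentials of $t(M),t(M')$ are assembled from the connecting maps of those floors, so $t(g)$ commutes with them.

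For \ref{iwc3} and \ref{iwcfu} the point is independence of choices. Given two weight Postnikov towers of the same $M$, I would use Proposition~\ref{pbw}(\ref{icompl}) to produce a morphism of towers compatible with $\id_M$, inducing a chain map; interchanging the two towers and invoking the uniqueness (up to the indeterminacy recorded in Remark~\ref{rcompl}) of such connecting morphisms, I would check that both composites are chain-homotopic to the identity, yielding the homotopy equivalence of \ref{iwc3}. Applied to a fixed $g$, the same comparison shows that any two choices of $t(g)$ differ by the weak homotopy relation $m_1-m_2=d_Bh+jd_A$; a direct check that this relation is respected by composition then produces the additive category $K_w(\hw)$ of \ref{iwcfu}. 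The boundedness refinement of \ref{iwc3} follows by making the economical choices $w_{\le l}M=0$ for $l\le -1$ when $M\in\cu_{w\ge 0}$ (resp. $w_{\le l}M=M$ for $l\ge 0$ when $M\in\cu_{w\le 0}$), which forces the corresponding $M^i$ to vanish outside the asserted range.

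Finally, for \ref{iwccone} I would assemble a weight Postnikov tower of $\co(g)$ from compatible towers of $M$ and $M'$: applying the octahedral axiom floor-by-floor to the chosen maps $w_{\le l}M\to w_{\le l}M'$ yields objects $w_{\le l}\co(g)$ together with triangles that are simultaneously weight decompositions of $\co(g)$ and whose successive cones equal $\co(M^{-l}[l]\to (M')^{-l}[l])$, i.e. the terms of $\co(t(g))$. Assertion \ref{iwcfunct} is immediate from weight-exactness: $F$ carries each weight decomposition triangle to one in $\cu'$, hence a weight Postnikov tower of $M$ to one of $F(M)$ and $t(g)$ to $t(F(g))$, the terms being $F(M^i)\in\cu'_{w'=0}$ since $F(\cu_{w=0})\subseteq\cu'_{w'=0}$.

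The main obstacle I anticipate lies in \ref{iwc3} and \ref{iwccone}: both rest on non-canonical octahedral constructions, and the honest work is to produce the comparison homotopies (resp. the tower of $\co(g)$) coherently enough that they thread through all floors at once. By contrast, the vanishing $d^2=0$ together with assertions \ref{iwc1} and \ref{iwcfunct} are purely formal consequences of the triangulated and weight-exactness axioms.
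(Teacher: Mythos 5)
Your sketch is, in substance, a reconstruction of the proofs of Theorem 3.2.2(II) and Theorem 3.3.1 of \cite{bws} — which is precisely what the paper's own (one-line) proof cites, with part \ref{iwcfunct} noted to be immediate from the definitions. Your treatment of parts \ref{iwc0}, \ref{iwc1}, \ref{iwcfu}, \ref{iwcfunct}, and of the boundedness refinement of \ref{iwc3} (via the economical choices $w_{\le l}M=0$, resp. $w_{\le l}M=M$) is correct and matches those arguments. However, two of your steps would not survive being written out in full.

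The serious gap is in \ref{iwccone}. Applying the octahedral axiom floor-by-floor to the level maps $\phi_l\colon w_{\le l}M\to w_{\le l}M'$ gives triangles $\co(\phi_l)\to\co(g)\to\co(j_l)$ (with $j_l$ the induced map on the $w_{\ge l+1}$-parts), but these are \emph{not} weight decompositions of $\co(g)$: $\co(\phi_l)$ is an extension of $w_{\le l}M[1]$ by $w_{\le l}M'$ and so lies only in $\cu_{w\le l+1}$, while $\co(j_l)$ lies only in $\cu_{w\ge l+1}$; these bounds are sharp in general (take $g=0$, where the construction mixes the shifted tower of $M$ with the unshifted tower of $M'$, so that no $k$ makes the triangle a $k$-weight decomposition). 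Moreover, your identification of the successive cones is wrong: for this tower they are $\co(t(g)^{-l}[l])$, which is an extension of $M^{-l}[l+1]$ by $M'^{-l}[l]$ and hence need not lie in $\cu_{w=l}$ at all, whereas the terms of the chain-complex cone $\co(t(g))$ are the direct sums $M^{i+1}\oplus M'^{i}$, not cones of the components of $t(g)$. The construction that actually works (and this is the real content of Theorem 3.3.1 of \cite{bws}) uses the "staircase" maps: setting $w_{\le l}\co(g):=\co\bigl(w_{\le l-1}M\to w_{\le l}M'\bigr)$ one gets an object of $\cu_{w\le l}$ whose complement $\co\bigl(w_{\ge l}M\to w_{\ge l+1}M'\bigr)$ lies in $\cu_{w\ge l+1}$, and the successive cones split by orthogonality as $(M^{-l+1}\oplus M'^{-l})[l]$ — the correct terms; even then one must still verify that the induced differentials assemble into the cone differential. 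A milder problem of the same nature occurs in \ref{iwc3}: since connecting morphisms between truncations of the \emph{same} weight are not unique (Proposition \ref{pbw}(\ref{icompl}) gives uniqueness only for $m<l$), the two comparison composites can only be shown to be \emph{weakly} homotopic to the identity, not chain-homotopic as you claim; one then needs Proposition 3.1.8(1) of \cite{bws} (recalled in Remark \ref{rwc}(\ref{irwc2})) — a morphism weakly homotopic to an isomorphism is a $K(\hw)$-isomorphism — to conclude the asserted homotopy equivalence.
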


\begin{proof} 
 Assertions \ref{iwc0}--\ref{iwccone} follow immediately from  Theorem 3.2.2(II) and Theorem 3.3.1 of \cite{bws}. 

Assertion \ref{iwcfunct} is an immediate consequence of the definition of a weight complex (and of weight-exact functors). 
\end{proof}

\begin{rema}\label{rwc}

\begin{enumerate}
\item\label{irwc1}  The term "weight complex" originates from \cite{gs}, where a certain complex of Chow motives was constructed for a variety $X$ over a characteristic $0$ field. The weight  complex functor of Gillet and  Soul\'e can be obtained via applying the ("triangulated motivic") weight complex functor $\dmge\to K^b(\chowe)$ (or $\dmgm\to K^b(\chow)$) 
to the {\it motif with compact support} of $X$ (see Proposition 6.6.2 of \cite{mymot}). Certainly, our notion of  a weight complex functor is much more general.

\item\label{irwc2} The weak homotopy equivalence relation was introduced  in \S3.1 of \cite{bws} (independently from the earlier 
Theorem 2.1 of \cite{barrabs}). 
It has several nice properties; in particular, the identity of  a complex if weakly homotopic to $0$ if and only if this complex is contractible (see  Proposition 3.1.8(1) of \cite{bws}).

\item\label{irwc3} 
Let $\bu$ be an additive category and $k\le l\in (\{-\infty\}\cup \z\cup \{+\infty\})$; we  assume in addition that $(k,l)$ differs both from $(-\infty, -\infty)$  and from $(+\infty, +\infty)$. For $m_1,m_2: A\to B$ (for $A,B\in \obj C(\bu)$) we will write $m_1\backsim_{[k,l]}m_2$ if $m_1-m_2$ is weakly homotopic to certain $ m_0$ such that  $m_0^i=0$ for $k\le i \le l$ (and $i\in \z$). In particular, $m_1$ is weakly homotopy equivalent to $m_2$ if  $m_1\backsim_{[-\infty,+\infty]}m_2$.

Then for $m: A\to B$  we have $m\backsim_{[k,l]}0$ if there exist two sequences $h^j, g^j\in \bu(A^j,B^{j-1})$ (for $j\in \z$) such that $d_B^{j-1}\circ h^j+g^{j+1}\circ d_A^j=m^j$ for $k\le j\le l$ and $d_B^{j}\circ h^j\circ d_A^{j}=d_B^{j}\circ g^j\circ d_A^{j}$ for all $j\in \z$. Hence in the case $k=l$ we have $m \backsim_{[k,l]}0$ if and only if $m$ is homotopic to an $m_0$ such that $m_0^k=0$ (since we can take $m_0=m+d_B\circ f+f\circ d_B$, where $f^i=h^i$ for $i\le k$ and $f^i=g^i$ for $i>k$). 
Moreover  (in contrast to the homotopy equivalence relation for morphisms) the weak homotopy possesses the following property: $m\backsim_{[k,l]}0$ if and only if $m\backsim_{[i,i]}0$ for all $k\le i \le l$.
So, the weak homotopy relation has certain advantages over (the usual) homotopy one. 

Furthermore, if $\bu\subset \bu'$  and  $m_1\backsim_{[k,l]}m_2$ in $K(\bu')$ (in the notation introduced above) then $m_1\backsim_{[k,l]}m_2$ in $K(\bu)$ also. 
Lastly (according to the previous part of this remark),  if $\id_A$ is weakly homotopic to zero then $A$ is contractible. 

\item\label{irwc4} Combining these observations with Proposition 3.2.4(2) of \cite{bws} one can easily deduce the following statement. 

Adopt the notation of Proposition \ref{pbwcomp} and  fix certain choices of $t(M)$ and $t(M')$ (as well of the of the corresponding analogues of the triangles (\ref{wdeck3})).
Then the possible choices of $t(f)$ corresponding to this data form a (whole) equivalence class in $C(\hw)(t(M),t(M'))$  with respect to the weak homotopy relation.

\item\label{irwc6} 
Below we will need the following properties of $\bu$-complexes: for $A\in \obj K(\bu)$ we will write $A\backsim_w 0$ (resp. $A\backsim^w 0$) if $A$ is homotopy equivalent to a complex concentrated in non-positive  (resp. in non-negative) degrees.

Certainly, if $A\backsim_w 0$ then $\id_A \backsim_{[1,+\infty]}0_A$;  if $A\backsim^w 0$ then $\id_A \backsim_{[-\infty,-1]}0_A$.
Now we prove that the converse implications are valid also; it certainly suffices to verify the first of these statements (since the second one is its dual).

If  $\id_A \backsim_{[1,+\infty]}0_A$ then $\id_A$ is weakly equivalent to $g\in K(\bu)(A,A)$ such that $g^i=0$ for all $i>0$.
Then $g$ is an automorphism of $A$ 
 (see Proposition 3.1.8(1) of \cite{bws}) and it can be factored through the stupid truncation morphism $A \to A^{\le 0}$ (for $A^{\le 0}=\dots \to A^{-1}\to A^0\to 0\to 0\dots$). 
Hence $A$ is a $K(\bu)$-retract of $A^{\le 0}$. Lastly, 
 Theorem 3.1 of \cite{schnur} (cf. also Remark 2.1.4(2) of \cite{bsnew}) 
 yields that $A$ is homotopy equivalent to a complex concentrated in non-positive degrees indeed.

\item\label{irwc5} Our definition of weight complexes is not (quite) self-dual, since for describing the weight complex of $M\in \obj \cu$ in $\cu^{op}$ (with respect to $w^{op}$; 
see Proposition \ref{pbw}(\ref{idual})) we have to consider $w_{\ge i}M$ instead. One may say that there exist "right" and "left" weight complex functors possessing similar properties. They are actually isomorphic if $\cu$ embeds into a category that possesses a model (see Remark 1.5.9(1) of \cite{bws}); the author does not know whether this is true (for weight complexes of morphisms) in general. Yet Proposition \ref{pwckill}(\ref{iwckill3}) below 
(along with Remark \ref{rkwsd})
 demonstrate that switching to left weight complexes would not have affected 
the relation $\backsim_{[k,l]}$ for weight complexes of morphisms. 

\item\label{irwc7} 
 It appears that $t$ can "usually" be "enhanced" to an  exact functor $t_{st}:\cu\to K(\hw)$; see Corollary 3.5 of \cite{sosnwc} and \S6.3 of \cite{bws}.


\end{enumerate}

\end{rema}

\subsection{Weight filtrations, virtual $t$-truncations, and adjacent structures}\label{svtt}

Now suppose that we are given a cohomological (or just any contravariant) functor $H:\cu\to \au$, where $\au$ is an abelian category. 
We  recall that weight structures yield functorial {\it weight filtrations} for $H(-)$ (that vastly generalize the weight filtration of Deligne for  \'etale and singular homology of varieties). We also consider virtual $t$-truncations for $H$ (as defined in \S2.5 of \cite{bws} and studied in more detail in \S2 of \cite{bger}). The latter allow us to '"slice" $H$ into "more simple" pieces (of limited {\it weight range}).  
 These truncations behave as if they were given by
truncations of $H$ in some triangulated 'category of functors' $\du$
with respect to some $t$-structure (whence the name). Moreover,  this is often actually the case (in particular, in  the "motivic" and "topological" settings that will be discussed below); yet the definition does not require the existence of $\du$ (and so, does not depend on its choice). Our choice of  the numbering for them is motivated by the cohomological convention for $t$-structures (that we use in this paper following \cite{bbd}); this convention combined with the homological numbering for weight structures causes  certain (somewhat weird) $-$ signs in the definitions of this section.

\begin{defi}\label{dwfil}
Let $H:\cu\to \au$ be a 
contravariant functor, $m\le n\in \z$.

1. We define the weight filtration for $H(M)$  as $$W^m(H)(X)=\imm(H(w_{\ge m}M)\to H(M));$$ 
here we take an arbitrary choice of  $w_{\ge m}M$.

2.  We define the functor $\tau^{\ge -n }(H)$ by the correspondence $$M\mapsto\imm (H(w_{\le n+1}M)\to H(w_{\le n}M)) ;$$ here we take arbitrary choices of  
$w_{\le n}M$ and $ w_{\le n+1}M$, and connect them as in Remark \ref{rcompl}(1).

3. 
If $H$ is cohomological, we will say that it is of weight range $\ge m$ if it annihilates $\cu_{w\le m-1}$; we will say  that it is   of weight range $[m,n]$ if it also annihilates $\cu_{w\ge n+1}$.
\end{defi}

We 
recall some properties of these notions.

\begin{pr}\label{pwfil}
In the notation of the previous definition the  following statements are valid.

\begin{enumerate}

\item\label{iwfil1} $W^mH(M)$ and  $\tau^{\ge m}(H)$ are $\cu$-functorial in $M$ (for any $m$; in particular, they do not depend on the choices of the corresponding  weight decompositions of $M$).

\item\label{iwfil2} If $H$ is cohomological then  $\tau^{\ge m}(H)$ also is.

\item\label{iwfil3} The functor $\cu(-,M):\cu\to \ab$ if of weight range $\ge m$ if and only if $M\in \cu_{w\ge m}$.

\item\label{iwfil4} If $H$ is (cohomological and) of weight range $\ge m$  then  $\tau^{\ge -n}(H)$ is (also cohomological and)   of weight range $[m,n]$.

\item\label{iwfil5} If $H$ is of weight range $[m,n]$ then the morphism  $H(w_{\ge m}M)\to H(M)$ is surjective and the morphism  
$H(M)\to H(w_{\le n}M)$ is injective (here we take arbitrary choice of the corresponding weight decompositions of $M$ and apply $H$ to their connecting morphisms). 
\end{enumerate}

\end{pr}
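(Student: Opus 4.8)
The unifying device throughout is that a contravariant cohomological $H$ turns each weight decomposition triangle into a long exact sequence in $\au$, which I would combine with the morphism-extension result Proposition \ref{pbw}(\ref{icompl}). I would dispatch part \ref{iwfil3} first, as it is immediate: by the (shifted) orthogonality of Proposition \ref{pbw}(\ref{iort}) one has $\cu_{w\ge m}=(\cu_{w\le m-1})^{\perp}$, so $\cu(-,M)$ annihilates $\cu_{w\le m-1}$ precisely when $\cu(X,M)=\ns$ for all $X\in\cu_{w\le m-1}$, i.e.\ when $M\in(\cu_{w\le m-1})^{\perp}=\cu_{w\ge m}$.

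For part \ref{iwfil5} I would apply $H$ to the two relevant weight decomposition triangles. From $w_{\le m-1}M\to M\to w_{\ge m}M$ one extracts the exact fragment $H(w_{\ge m}M)\to H(M)\to H(w_{\le m-1}M)$; since $H$ has weight range $\ge m$ and $w_{\le m-1}M\in\cu_{w\le m-1}$, the right-hand term vanishes and the first arrow is surjective. Dually, from $w_{\le n}M\to M\to w_{\ge n+1}M$ one gets $H(w_{\ge n+1}M)\to H(M)\to H(w_{\le n}M)$, and weight range $[m,n]$ forces $H(w_{\ge n+1}M)=0$, so the second arrow is injective. Only exactness of the long exact sequence and the two vanishing conditions are used.

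Part \ref{iwfil1} I would handle by a single image-comparison argument applied in two guises. Given two choices of $w_{\ge m}M$, Proposition \ref{pbw}(\ref{icompl}) with $g=\id_M$ supplies a morphism $\phi$ between the two decomposition triangles over $\id_M$; applying the contravariant $H$ yields $H(v')=H(v)\circ H(\phi)$ for the connecting maps $v,v'$, whence $\imm H(v')\subseteq\imm H(v)$, and symmetry gives equality, so $W^m(H)(M)$ is independent of the choice. For $\cu$-functoriality, any $g\in\cu(M,M')$ extends by the same proposition to a morphism of decomposition triangles, so $H(g)\circ H(v')=H(v)\circ H(\psi)$, which shows $H(g)$ carries $W^m(H)(M')$ into $W^m(H)(M)$. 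The identical bookkeeping applies to $\tau^{\ge m}(H)$, where moreover the connecting morphisms $w_{\le n}M\to w_{\le n+1}M$ are unique by the $m<l$ case of Proposition \ref{pbw}(\ref{icompl}).

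The genuinely hard part is \ref{iwfil2}: that $\tau^{\ge m}(H)$ is again cohomological. Here $\tau^{\ge m}(H)(M)$ is a subquotient (an image) of values of $H$, so one cannot simply apply $H$ to a triangle; the plan is to follow \S2 of \cite{bger}. Given a distinguished triangle $M'\to M\to M''$, I would first glue together weight decompositions compatible across the triangle (a commuting ladder of weight Postnikov data), and then run a careful diagram chase in $\au$ — a nine-lemma / exact-couple type argument — to extract the long exact sequence for $\tau^{\ge m}(H)$. I expect the compatible choice of weight decompositions along the triangle, together with the control of the resulting images and boundary maps, to be the main obstacle. Granting \ref{iwfil2}, part \ref{iwfil4} is then routine: cohomologicality is inherited, and the weight-range bounds follow by evaluating $\tau^{\ge -n}(H)$ on $\cu_{w\le m-1}$ and $\cu_{w\ge n+1}$ using the convenient choices $w_{\le n}M=M$ (respectively $w_{\le n}M=0$) permitted by part \ref{iwfil1}, together with the hypothesis that $H$ annihilates $\cu_{w\le m-1}$.
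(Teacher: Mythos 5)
Your proposal is correct. On the items for which the paper actually argues, your proof coincides with the paper's: part \ref{iwfil3} via the orthogonality $\cu_{w\ge m}=(\cu_{w\le m-1})^{\perp}$ of Proposition \ref{pbw}(\ref{iort}); part \ref{iwfil5} by applying $H$ to the two weight decomposition triangles and using the two vanishing hypotheses (this is exactly the paper's "consider the corresponding long exact sequences"); and part \ref{iwfil4} by evaluating on degenerate choices of weight decompositions (the paper phrases this through Proposition \ref{pbw}(\ref{iwd0}), which forces $w_{\le n}M\cong 0$ when $M\in\cu_{w\ge n+1}$ --- the same point as your choice $w_{\le n}M=0$, and both uses rest on the independence of choices from part \ref{iwfil1}). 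The genuine difference is in parts \ref{iwfil1} and \ref{iwfil2}, where the paper gives no argument at all but cites Proposition 2.1.2(2) of \cite{bws} and Theorem 2.3.1 of \cite{bger}. You prove part \ref{iwfil1} directly, and your argument is sound: mutual inclusion of images gives independence of choices, and the crucial ingredient for the $\tau^{\ge m}(H)$ case --- uniqueness of connecting morphisms compatible with a given morphism when the two weight indices differ (the $m<l$ case of Proposition \ref{pbw}(\ref{icompl})) --- is precisely what makes the relevant squares commute and makes the induced map on images independent of the chosen extension; this buys self-containedness that the paper's citation does not provide. For part \ref{iwfil2} you, like the paper, ultimately defer to \cite{bger}; your outline (weight decompositions chosen compatibly along a distinguished triangle, followed by a diagram chase) is indeed the strategy of that reference, so while this is the one place where you do not give a complete argument, it is not a gap relative to the paper's own proof, which proves nothing there either.
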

\begin{proof}
The first part of assertion \ref{iwfil1} is given by Proposition 2.1.2(2) of \cite{bws}.

Its second part along with assertion \ref{iwfil2} is contained in 
Theorem 2.3.1 of \cite{bger}. 

The remaining assertions are contained in Proposition 2.4.4 of \cite{bpure}.

\end{proof}

Now we would like to relate virtual $t$-truncations to actual ones. We will give the definition of a $t$-structure here mostly for fixing the notation; next we define 
 adjacent weight and $t$-structures.

\begin{defi}\label{dtstr}

I. A pair of subclasses  $\cu^{t\ge 0},\cu^{t\le 0}\subset\obj \cu$
 will be said to define a
$t$-structure $t$ if 
they satisfy the
following conditions:

(i) $\cu^{t\ge 0},\cu^{t\le 0}$ are strict, i.e., contain all
objects of $\cu$ isomorphic to their elements.

(ii) $\cu^{t\ge 0}\subset \cu^{t\ge 0}[1]$, $\cu^{t\le
0}[1]\subset \cu^{t\le 0}$.

(iii)  $\cu^{t\le 0}[1]\perp
\cu^{t\ge 0}$.

(iv) For any $M\in\obj \cu$ there exists
a  {\it $t$-decomposition} distinguished triangle
\begin{equation}\label{tdec}
A\to M\to B{\to} A[1]
\end{equation} such that $A\in \cu^{t\le 0}, B\in \cu^{t\ge 0}[-1]$.

II. For $m\in n\in \z$ we will denote $\cu^{t\ge 0}[n]$ by $\cu^{t\ge -n}$; we also consider $\cu^{t\le -m}= \cu^{t\le 0}[m]$ 
and $\cu[m,n]=\cu^{t\le -m}\cap \cu^{t\ge -n}$ (cf. Definition \ref{dwstr}(IV); the $\shtop$-version of this notation can be found in  \S3.2 of \cite{marg}).

III. If $w$ is a weight structure for $\cu$ and $t$ is a $t$-structure for it we will say that $w$ is adjacent to $t$ (or that $t$ is adjacent to $w$) if $\cu^{t\le 0}=\cu_{w\ge 0}$. 

\end{defi}

We  recall  a few well-known properties of  $t$-structures and some basics on adjacent structures.

\begin{rema}\label{rtst}
Let $m\le n\in \z$; let $t$ be a $t$-structure for $\cu$.

1. Recall that the triangle (\ref{tdec}) is canonically (and functorially) determined by $M$. So for  $A'[-1-n]\to M[-1-n]\to B'[-1-n]$ being a $t$-decomposition of $M[-1-n]$  we can denote $B'$ by $t^{\ge -n}M$ (and this notation is functorial in contrast to the setting of weight structures). If $M\in \cu^{t\le -m}$ then we (certainly) have
 $t^{\ge -n}M\in \cu[m,n]$.

Recall also that $\cu^{t\ge -n}=(\cu^{t\le -1-n})^{\perp}$.

2. The heart of $t$ is defined similarly to that of $w$: this is a full subcategory $\hrt$ of $\cu$ with $\obj \hrt=\cu[0,0]$. Recall that $\hrt$ is necessarily an abelian category with short exact sequences corresponding to distinguished triangles in $\cu$.

3. The following statement is 
a particular case of Proposition 2.5.4(1) of \cite{bger} (cf. also Proposition 2.5.6(1) of ibid.):  for $H=\cu(-,M)$ we have $\tau^{\ge-n }(H)\cong \cu(-,t^{\le -n}M)$.

4. We will mostly interested in "topological" and motivic examples of weight structures. In all of these examples (see \S4.6 of \cite{bws} or \S\ref{sshtop} below for the $\shtop$-one, whereas 
a certain list of so-called Chow weight structures is described in \S3 of \cite{bonspkar}; cf. also Remark \ref{rkwmot}(3)
below) $\cu$ is closed with respect to small coproducts and there exists a small additive negative (see Definition \ref{dwstr}(VII)) subcategory $\bu\subset \cu$  such that $\bu$ Hom-generates $\cu$ and the objects of $\bu$ are compact in $\cu$. 

In this setting adjacent $w$ and $t$ for $\cu$ can be constructed as follows: 
one should take $\cu_{w\ge 0}=\cu^{t\le 0}=(\cup_{i< 0}\obj \bu[i])^{\perp}$ and recover the "remaining halves of these structures" using the corresponding orthogonality conditions  (see Theorem 4.5.2 of \cite{bws} or \S A.1 of \cite{brelmot}). Moreover, in this case $\hw$ is the idempotent completion of the category of all small coproducts of objects of $\bu$, whereas $\cu[0,0]=(\cup_{i\in \z\setminus \ns}\obj \bu[i])^{\perp}$. Furthermore, the correspondence sending $M\in \obj \cu$ into the  functor $\bu^{op}\to \ab: B\mapsto \cu(B,M)$, yields an exact equivalence of $\hrt$ with the (abelian) category of all  additive functors $\bu^{op}\to \ab$.

5. Somewhat more simple (yet certainly not "trivial") examples of adjacent structures are given by  appropriate versions of $D(\au)$ for $\au$ being an abelian category with enough projectives; then we have $\hrt\cong \au$ and $\hw\cong \operatorname{Proj} \au$ (the corresponding $w$ is obtained by considering projective hyperresolutions of $\au$-complexes).

6. A full (and functorial) description of functors of weight range $[m,m]$ is given by Proposition 2.4.4(8) of \cite{bpure}. These functors are called {\it $w$-pure} in ibid.; see Remark 2.4.5(5) of ibid. for a certain justification of this terminology.


7.  In Definition 2.1.2 of \cite{bpure} it was said that $w$ is {\bf left} adjacent to $t$ or that $t$ is {\bf right} adjacent to $w$ if $\cu_{w\ge 0}=\cu^{t\le 0}$.
Dually, $w$ was said to be right adjacent to $t$ whenever $\cu_{w\le 0}=\cu^{t\ge 0}$.


We are (currently) not interested in latter case since we do not have the corresponding $t$-structures  in the cases interesting to us. For this reason we will use the convention from Definition \ref{dtstr}(III) instead of saying that $w$ is left (or right) adjacent to  $t$.

We note however that all our definitions and results can be dualized; in particular,   weight filtrations and virtual $t$-truncations can easily be defined for homology 
(by a simple reversion of arrows; cf. Remark 2.4.4(4) of loc. cit.). 

\end{rema}

\section{On morphisms killing weights and the relation to weight complexes}\label{snews}

Recall that (a fixed triangulated category) $\cu$ is assumed to be endowed with a weight structure $w$. 

In \S\ref{sskw} we 
 define morphisms killing weights $m,\dots, n$ and  objects without these weights; we give several equivalent definitions of these notions.

In \S\ref{ssprkw} we establish several interesting properties of our notions. In particular, we prove that an object without weights  $m,\dots, n$ admits a (weight) {\it decomposition  avoiding these weights} (in the sense defined by Wildeshaus) if $\cu$ is Karoubian.
We also relate killing weights to weight filtrations for cohomology and to virtual $t$-truncations of certain representable functors; so we obtain certain
"cohomological detectors" for killing weights (and being without weights  $m,\dots, n$ for objects).

In \S\ref{skwwc} 
we relate our main notions to  the weight complex functor $t$. 
In particular, $M$ is without weights  $m,\dots, n$ if and only if  $t(M)$ 
 possesses this property.
Next we prove that $t$ is "conservative up to degenerate cones" (significantly improving the corresponding results of \S3 of \cite{bws}).

In \S\ref{sshtop} we apply our result to the study of the (topological) stable homotopy category $\shtop$ (endowed with the spherical weight structure that was defined  in \S4.6 of \cite{bws}). We fill in some gaps in the arguments of ibid. and also explain what our (new) definitions and results mean in this setting (they are closely related to the homology and cohomology given by Eilenberg-Maclane spectra). In particular, we prove the converse to  the stable Hurewicz theorem that was mentioned in the introduction.

In \S\ref{stp} we relate the main subjects of this paper to  to {\it torsion pairs} and {\it injective classes} (under the assumption that there exists a $t$-structure adjacent to $w$).

\subsection{Morphisms that kill certain weights: equivalent definitions}\label{sskw}

\begin{pr}\label{pkillw}
Let $g\in \cu(M,N)$ (for some $M,N\in \obj \cu$); $m\le n\in \z$.
Then the following conditions are equivalent.

\begin{enumerate}
\item\label{ikw1}
There exists a choice of  $w_{\le n}M$ and $w_{\ge m}N$ such that the composed morphism
$w_{\le n}M\to M\stackrel{g}{\to}N\to  w_{\ge m}N$ is zero (here the first and the third morphism in this chain come from the corresponding weight decompositions).

\item\label{ikw3}
There exists a choice of  $w_{\le n}M$ and $w_{\le m-1}N$ and of a 
morphism $h$ making the square
\begin{equation}\begin{CD} \label{ekw}
w_{\le n}M@>{}>>M
\\
@VV{
h}V@VV{g}V\\
w_{\le m-1}N@>{}>>N 
\end{CD}\end{equation}
commutative.

\item\label{ikw5}
There exists a choice of  $w_{\ge n+1}M$ and $w_{\ge m}N$ and of a 
morphism $j$ making the square
\begin{equation}\begin{CD} \label{ekw1}
M @>{}>>w_{\ge n+1}M
\\
@VV{
g}V@VV{j}V\\
N @>{}>>w_{\ge m}N 
\end{CD}\end{equation}
commutative.


\item\label{ikw7} Any choice of an $n$-weight decomposition of $M$ and an $m-1$-weight decomposition of $N$ can be completed to a morphism of distinguished triangles of the form
\begin{equation}\label{eccompl} \begin{CD} w_{\le n} M@>{}>>
M@>{}>> w_{\ge n+1}M\\
@VV{h}V@VV{g}V@ VV{j}V \\
w_{\le m-1} N@>{}>>
N@>{}>> w_{\ge m}N \end{CD}\end{equation}

\item\label{ikw8} For any choice of  $m-1$- and $n$-weight decompositions of $M$ and $N$, and for $a$ and $b$ being the corresponding (canonical) connecting morphisms 
$w_{\le m-1} M\to w_{\le n}  M$ and  $w_{\le m-1} N\to w_{\le n}  N$ respectively
(see Remark \ref{rcompl}(1)),
there exists a commutative diagram
\begin{equation}\label{edouble}
 \begin{CD} w_{\le m-1} M@>{a}>>
w_{\le n} M@>{}>> M\\
@VV{c}V@VV{d}V@ VV{g}V \\
w_{\le m-1} N@>{b}>>
w_{\le n}  N@>{}>> N \end{CD}\end{equation}
along with a morphism $h\in \cu(w_{\le n} M, w_{\le m-1} N) $ that turns the corresponding "halves" of the left hand square of (\ref{edouble}) into commutative triangles.

\item\label{ikw9} 
For any choice of the diagram (\ref{edouble}) as above its left hand commutative square can be completed to a morphism of  triangles as follows:
\begin{equation}\label{edoublecone}
 \begin{CD} w_{\le m-1} M@>{a}>>
w_{\le n} M@>{}>>\co(a)\\
@VV{c}V@VV{d}V@ VV{0}V \\
w_{\le m-1} N@>{b}>>
w_{\le n}  N@>{}>> \co(b)\end{CD}\end{equation}

\item\label{ikw9f} There exists a choice of  (\ref{edouble}) such that the corresponding diagram  (\ref{edoublecone}) is a morphism of triangles.

\end{enumerate}

\end{pr}

\begin{proof}
Conditions \ref{ikw1}, \ref{ikw3}, and \ref{ikw5} are equivalent by Proposition 1.1.9 of \cite{bbd} (that is easy; 
 in particular, the  long exact sequence $\dots\to \cu(w_{\le n}M,  w_{\le m-1}N)\to \cu(w_{\le n}M,  N)\to \cu(w_{\le n}M,  w_{\ge m}N)\dots$ yields that condition \ref{ikw1} is equivalent to \ref{ikw3}). 

Loc. cit. also implies 
 that any of these conditions  implies the existence of some diagram of the form (\ref{eccompl}) for the corresponding choices of rows.
One also obtains a diagram of this form for arbitrary choices of these weight decompositions by composing this diagram with the corresponding "change of weight decompositions" diagrams (see Remark \ref{rcompl}(1,2)); so we obtain condition \ref{ikw7}. On the other hand, the latter condition obviously implies  conditions \ref{ikw1}, \ref{ikw3}, and \ref{ikw5}.

Next, condition \ref{ikw8} certainly implies condition \ref{ikw3}. Conversely, 
obtain the commutative diagrams in condition \ref{ikw8} it suffices to take $a$ and $b$ being the canonical connecting morphisms 
$w_{\le m-1} M\to w_{\le n}  M$ and  $w_{\le m-1} N\to w_{\le n}  N$
(see Remark \ref{rcompl}(1)), $c=h\circ a$, and $d=b\circ h$. 

Next, condition  \ref{ikw9} certainly yields condition \ref{ikw9f}. 
Now, consider the long exact sequence $\dots\to \cu(w_{\le n} M,w_{\le m-1} N)\to \cu(w_{\le n} M,w_{\le n} N) \to \cu(w_{\le n} M,\co (b))\to \dots$ (for an arbitrary choice of (\ref{edouble})). If condition \ref{ikw9f} is fulfilled, the composed morphism $w_{\le n} M\stackrel{d}{\to} w_{\le n} N\to \co (b)$ is zero; hence there exists a morphism $h\in \cu(w_{\le n} M,w_{\le m-1} N)$ making the corresponding triangle (a "half" of the left hand square in (\ref{edoublecone})) commutative. Combining this with the commutativity of the right hand square in (\ref{edouble})  we obtain condition \ref{ikw3} once again.

It remains to verify that   condition \ref{ikw8} implies condition \ref{ikw9}. The aforementioned long exact sequence yields the vanishing of the corresponding composed morphism
$w_{\le n} M\to \co (b)$, whereas the long exact sequence $\dots\to \cu(w_{\le n} M,w_{\le m-1} N)\to \cu(w_{\le m-1} M,w_{\le m-1} N) \to \cu(\co(a)[-1],w_{\le m-1} N)\to \dots$ yields the vanishing of the composed morphism $\co(a)[-1]\to w_{\le m-1} N$. We obtain that (\ref{edoublecone}) is a morphism of triangles indeed.
\end{proof}


Now we give the main definitions of this paper.

\begin{defi}\label{dkw}
1. We will say that  a morphism $g$ {\it kills weights $m,\dots, n$} if it satisfies the equivalent conditions of the previous proposition (and   we will say that $f$ kills weight $m$ if $m=n$). We will denote the class of all $\cu$-morphisms killing weights  $m,\dots, n$ by $\mo_{\cancel{[m,n]}}\cu$.

2. We will say that an object $M$   is {\it without weights $m,\dots,n$} if $\id_M$ kills weights  $m,\dots, n$.
 We will denote the class of  $\cu$-objects without weights $m,\dots,n$ by  $\cu_{w\notin[m,n]}$.
\end{defi}

\begin{rema}\label{rkwsd}
1. Obviously, these definitions are self-dual in the following natural sense: 
$g\in \mo_{\cancel{[m,n]}}\cu$ (resp.  $M\in \cu_{w\notin[m,n]}$) if and only if $g$ kills $w^{op}$-weights $-n,\dots,-m$ (resp. $M$ is without $w^{op}$-weights  $-n,\dots,-m$) in $\du=\cu^{op}$ (see Proposition \ref{pbw}(\ref{idual})).

2. Now we describe a simple example that illustrates our definitions.

Let $\bu=\lvect$ (more generally, one can consider any  semi-simple  abelian category here); we endow $\cu=K(\bu)$ or $\cu=K^b(\bu)$ with the stupid weight structure  $w$ (see Remark \ref{rstws}(1)). Then  $M\in \cu_{w\le 0}$ (resp. $\in \cu_{w\ge 0}$) if and only if the homology $H_i(M)=H_0(M[i])$ (see the convention in \S\ref{snotata}) vanishes for $i<0$ (resp. for $i>0$). Hence $g\in \cu(M,N)$ kills weights $m,\dots,n$ (resp. $M$ is without weights  $m,\dots,n$) if and only if 
we have $\cu(-,K[i])(g)=0$ (resp. $\cu(M,K[i])=0$; so we put $K$ in degree $-i$)  for all $i\in \z$, $m\le i\le n$. Thus the functors $\cu(-,K[i])$ for $m\le i\le n$ yield a collection of cohomology theories that detect whether $g\in \mo_{\cancel{[m,n]}}\cu$ 
and $M\in \cu_{w\notin[m,n]}$. 
We 
 do not have so simple "detecting families" of functors in general; yet we will construct quite interesting detecting classes of cohomology below (see Theorem \ref{tkwhom} 
 for the general case and Proposition \ref{pshtopn}(\ref{it10})  for the case $\cu=\shtop$). We prefer considering cohomological detectors (in this paper) for the reasons explained in Remark \ref{rtst}(4)  (cf. also Remark \ref{rkwmot}(6) below).
\end{rema}

\subsection{Some properties of our main notions 
}\label{ssprkw}

\begin{theo}\label{tprkw}
Let  $M,N,O\in \obj \cu$, $h\in \cu(N,O)$, and assume that a morphism $g\in \cu(M,N)$
kills weights $m,\dots, n$
 for some $m\le n\in \z$.
Then
the following statements are valid.

\begin{enumerate}
\item\label{iprkw1}
Assume  $m\le m'\le n'\le n$. Then 
$g$ also kills weights $m',\dots, n'$.

\item\label{iprkwds}
$\mo_{\cancel{[m,n]}}\cu$ is closed with respect to direct sums and retracts (i.e., $\bigoplus g_i$ kills weights $m,\dots, n$ if and only if all $g_i$ do that).

\item\label{iprkwcomp} Assume that 
$h$ kills weights $m',\dots, m-1$ for some $m'<m$. Then $h\circ g$ kills weights 
$m',\dots, n$.

\item\label{iprkwfunct} Let $F:\cu\to \du$ be a weight-exact functor (with respect to a certain weight structure for $\du$) and  assume that 
 $h$ kills weights $m,\dots, n$. Then $F(h)$ kills these weights also.

\item\label{iprkwfunctemb} For $F$ and $h$ as in the previous assertion assume that $F$ is a full embedding and $F(h)\in \mo_{\cancel{[m,n]}}\du$. 
Then $h\in \mo_{\cancel{[m,n]}}\cu$. 

\item\label{iprkwcompobj} Assume that $O$ is without weights $m,\dots, n$ as well as without weights $n+1,\dots, n'$ for some $n'>n$. Then
$O\in  \cu_{w\notin[m',n]}$.

\item\label{iwildef1}
Let there exist a distinguished triangle 
\begin{equation}\label{ewild}
X\to O \to Y\to X[1]
\end{equation}
with $X\in \cu_{w\le m-1}$, $Y\in \cu_{w\ge n+1}$ (we call it a {\it  decomposition avoiding weights  $m,\dots, n$} for $M$). Then (\ref{ewild})  yields  $l$-weight decompositions of $O$ for any $l\in \z$, $m-1\le l\le n$. Moreover, $O$ is without weights $m,\dots,n$, and  
this triangle is unique up to a canonical isomorphism. 

\item\label{iwildef2} Assume that $\cu$ is Karoubian. Then the converse to the previous assertion is true also (i.e., 
any  $O$  without weights $m,\dots,n$ 
 admits a   decomposition avoiding weights  $m,\dots, n$).
\end{enumerate}
\end{theo}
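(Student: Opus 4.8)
The plan is to cash in the Karoubianness hypothesis through Proposition \ref{pbw}(\ref{icompidemp}): I will manufacture an idempotent endomorphism $e$ of a chosen truncation $w_{\le n}O$ that is compatible with $\id_O$, and then read the triangle (\ref{ewild}) off the splitting that \ref{icompidemp} produces. The point is that \ref{icompidemp} turns such an idempotent into a direct-sum decomposition of a weight decomposition triangle, and the summand cut out by $e$ will turn out to live in $\cu_{w\le m-1}$ rather than merely in $\cu_{w\le n}$.

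Since $O$ is without weights $m,\dots,n$, condition \ref{ikw3} of Proposition \ref{pkillw} applied to $\id_O$ supplies choices of $w_{\le n}O$ and $w_{\le m-1}O$ and a morphism $h\colon w_{\le n}O\to w_{\le m-1}O$ with $c_{m-1}\circ h=c_n$, where $c_{m-1}\colon w_{\le m-1}O\to O$ and $c_n\colon w_{\le n}O\to O$ denote the structure maps. As $m-1<n$, Remark \ref{rcompl}(1) (via Proposition \ref{pbw}(\ref{icompl})) gives a canonical connecting morphism $a\colon w_{\le m-1}O\to w_{\le n}O$ with $c_n\circ a=c_{m-1}$. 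The conceptual heart is the identity $h\circ a=\id_{w_{\le m-1}O}$: indeed $c_{m-1}\circ(h\circ a)=c_n\circ a=c_{m-1}$, so $h\circ a-\id$ is annihilated by $(c_{m-1})_*$, hence (by the long exact sequence attached to the triangle $w_{\le m-1}O\to O\to w_{\ge m}O$) factors through $w_{\ge m}O[-1]\in\cu_{w\ge m+1}$; but orthogonality gives $\cu(w_{\le m-1}O,w_{\ge m}O[-1])=\ns$, forcing $h\circ a=\id$.

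Consequently $e:=a\circ h$ is idempotent, since $e^2=a\circ(h\circ a)\circ h=a\circ h=e$, and it is compatible with $\id_O$ because $c_n\circ e=c_{m-1}\circ h=c_n$. I now apply Proposition \ref{pbw}(\ref{icompidemp}) at level $n$ to the object $O$ and the idempotent $e$ (viewing $e$ as the left vertical arrow of a diagram (\ref{ecompl}) with equal rows and middle arrow $\id_O$). It yields a splitting $w_{\le n}O\cong M_1\oplus M_0$ onto whose first summand $e$ projects, with $M_0\in\cu_{w=n}$, and it presents the chosen $n$-weight decomposition as the direct sum of an $n$-weight decomposition $M_1\to O\to M_2$ and the trivial triangle $M_0\to 0\to M_0[1]$. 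Because $h\circ a=\id$ exhibits $a$ as a split monomorphism, we have $M_1=\imm(e)=\imm(a)\cong w_{\le m-1}O$, so $M_1\in\cu_{w\le m-1}$, while $M_2\in\cu_{w\ge n+1}$ since $M_1\to O\to M_2$ is an $n$-weight decomposition. Thus the triangle $M_1\to O\to M_2$ is a decomposition of $O$ avoiding weights $m,\dots,n$, as desired.

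I expect the only delicate point to be the bookkeeping around Proposition \ref{pbw}(\ref{icompidemp}): one must be sure that $e$ genuinely occurs as the left vertical arrow of a diagram (\ref{ecompl}) with equal rows and identity middle arrow, and that the summand onto which $e$ \emph{projects} is precisely the one identified with $w_{\le m-1}O$ through $a$ (and not its complement $M_0\in\cu_{w=n}$). The orthogonality computation giving $h\circ a=\id$ is the substantive step; once it is in place, everything reduces to formal manipulation of the splitting.
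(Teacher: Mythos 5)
You address only part (\ref{iwildef2}) of the theorem (parts \ref{iprkw1}--\ref{iwildef1} are never discussed, though those are routine), and in the part you do treat there is a genuine error at the step you yourself call the conceptual heart. The identity $h\circ a=\id_{w_{\le m-1}O}$ is false in general, and its proof rests on a sign slip in the weight bookkeeping: in the homological convention of this paper (Definition \ref{dwstr}(I,III), Remark \ref{rstws}(3)) the shift $[-1]$ \emph{lowers} weights, so $w_{\ge m}O[-1]$ lies in $\cu_{w\ge m-1}$, not in $\cu_{w\ge m+1}$; since orthogonality only gives $\cu_{w\le m-1}\perp\cu_{w\ge m}$, it does not kill the group $\cu(w_{\le m-1}O,w_{\ge m}O[-1])$, and that group is typically nonzero. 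Concretely, take $\cu=K^b(\lvect)$ with the stupid weight structure, $O=X\oplus Y$ with $X=L$ placed in degree $1$ (so $X\in \cu_{w=-1}$) and $Y=L$ placed in degree $-1$ (so $Y\in \cu_{w=1}$), and $m=n=0$; choose $w_{\le -1}O=X\oplus Z$ with $Z=L$ in degree $1$ and structure morphism killing $Z$ (a legitimate $(-1)$-weight decomposition, with third term $Y\oplus Z[1]\in \cu_{w\ge 0}$), and choose $w_{\le 0}O=X$. Then the canonical $a\colon X\oplus Z\to X$ is the projection killing $Z$, every admissible $h\colon X\to X\oplus Z$ has first component $\id_X$, and hence every composition $h\circ a$ annihilates the summand $Z$; so $h\circ a\neq \id$ for \emph{all} admissible choices. (The uniqueness clause of Proposition \ref{pbw}(\ref{icompl}) requires a strict inequality of the two truncation indices, so it cannot be invoked for endomorphisms of $w_{\le m-1}O$.) As written, therefore, neither the idempotency of $e=a\circ h$ nor the identification $M_1\cong w_{\le m-1}O$ is established.

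Both uses of the false identity can be repaired, and the repair is exactly the paper's proof. Since $a\circ h\circ a$ and $a$ are both left components of morphisms of triangles from the fixed $(m-1)$-weight decomposition to the fixed $n$-weight decomposition that are compatible with $\id_O$, and since $m-1<n$, the uniqueness clause of Proposition \ref{pbw}(\ref{icompl}) \emph{does} apply here and yields the weaker identity $a\circ h\circ a=a$; this already gives $e^2=(a\circ h\circ a)\circ h=a\circ h=e$, so $e$ is idempotent and compatible with $\id_O$, and Proposition \ref{pbw}(\ref{icompidemp}) applies as you intend. For the weight bound on $M_1$ one should not claim an isomorphism with $w_{\le m-1}O$: writing $i\colon M_1\to w_{\le n}O$ and $p\colon w_{\le n}O\to M_1$ for the splitting of $e$, one gets $\id_{M_1}=p\circ e\circ i=(p\circ a)\circ(h\circ i)$, so $M_1$ is a retract of $w_{\le m-1}O$ and lies in $\cu_{w\le m-1}$ because that class is Karoubi-closed. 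With these two substitutions your argument coincides with the proof given in the paper.
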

\begin{proof}

\begin{enumerate}
\item Easy (if we use condition \ref{ikw3} of Proposition  \ref{pkillw}) 
 from Remark \ref{rcompl}(1,2). 

\item  Easy (if we use conditions \ref{ikw1} and \ref{ikw7} of Proposition  \ref{pkillw}); recall Proposition \ref{pbw}(\ref{iadd}). 

\item  Easy since we can compose the diagrams given by  Proposition  \ref{pkillw}(\ref{ikw3}); see Remark \ref{rcompl}(2) again.

\item 
Applying $F$ to (some choice of) the vanishing for $h$ given by condition \ref{ikw1} of Proposition  \ref{pkillw} we get this condition for $F(h)$.

\item For any choice of $w_{\le n}M$ and $w_{\ge m}N$  the composed morphism
$F(w_{\le n}M)\to F(M)\stackrel{g}{\to}N\to F( w_{\ge m}N)$ is zero (see  condition \ref{ikw3} of Proposition  \ref{pkillw}); hence  this condition is fulfilled for $h$.

\item Immediate from the previous assertion (since $\id_O\circ \id_O=\id_O$).

\item Each statement in this assertion easily follows from the previous ones.

(\ref{ewild}) yields the corresponding $l$-weight decompositions of $O$ just by definition. We obtain that $O$ is without weights $m,\dots, n$ immediately (here we can use either condition \ref{ikw3} or condition \ref{ikw5} of Proposition  \ref{pkillw}).
 This triangle (\ref{ewild})  is canonical by Proposition \ref{pbw}(\ref{icompl}) (if we take $M=M'=O$, $g=\id_O$, $m=n-1$ and $l=n$ in it). 

\item The idea is to "modify" any (fixed) $n$-decomposition  of $O$ using Proposition \ref{pbw}(\ref{icompidemp}).

We also fix an $m$-weight decomposition of $O$. By Proposition  \ref{pkillw}(\ref{ikw3}) there exists a commutative square
$$\begin{CD} 
w_{\le n}O@>{}>>O
\\
@VV{
z}V@VV{\id_O}V\\
w_{\le m-1}O@>{}>>O 
\end{CD}$$
Next, Proposition \ref{ecompl} yields the existence and uniqueness of the square
$$\begin{CD} 
w_{\le m-1}O@>{}>>O
\\
@VV{
t}V@VV{\id_O}V\\
w_{\le n}O@>{}>>O 
\end{CD}$$
Now, we can consider multiple compositions of these squares (see Remark \ref{rcompl}). Hence the aforementioned uniqueness statement yields $t=t\circ z\circ t$.
Thus the morphism $u=t\circ z$ is idempotent, and the square $$\begin{CD} 
w_{\le n}O@>{}>>O
\\
@VV{
u}V@VV{\id_O}V\\
w_{\le n}O@>{}>>O 
\end{CD}$$
is commutative. Now we apply Proposition \ref{pbw}(\ref{icompidemp});  for $X$ being the "image" of $u$ we obtain an $n$-weight decomposition 
$X\to O\to Y$. It remains to note that $X\in \cu_{w\le m-1}$ since $u$ factors through $w_{\le m-1}O$.
\end{enumerate} 
\end{proof}

\begin{rema}\label{rwild}
\begin{enumerate}
\item\label{irwild1}
The existence of  a   decomposition of $O$ avoiding weights  $m,\dots, n$ means precisely 
 that $O$  is without weights $m,\dots,n$ in the sense of Definition 1.10 of \cite{wild}. So, our definition of this notion is equivalent to the (older) definition of Wildeshaus (who introduced this term) if $\cu$ is  Karoubian. Moreover, if suffices to assume here that $\cu$ is {\it weight-Karoubian} (see Definition \ref{dpkar}(3) and Proposition \ref{pwkar} below); however,  \S\ref{ssskwild}  demonstrates that this equivalence statement does not hold unconditionally. 

Hence the uniqueness statement in Theorem \ref{tprkw}(\ref{iwildef1}) 
 coincides with Corollary 1.9 of \cite{wild}. 
Below we will also mention the functoriality of weight decompositions of this form. So we formulate the corresponding Proposition 1.7 of ibid. here:  
 if $X_i\to O_i \to Y_i$ are distinguished triangles for $i=1,2$, $X_i\in \cu_{w\le m-1}$, $Y_i\in \cu_{w\ge n+1}$ (and $n\ge m$), then any $g:O_1\to O_2$ uniquely extends to a morphism of these weight decompositions (cf. Theorem \ref{tprkw}(\ref{iwildef1}) once again; note also that the argument used in its proof extends to re-prove loc. cit. without any difficulty).  

\item\label{irwild2}
  Certainly,  $\cu_{w\notin[m,n]}$ is closed with respect to retracts and (finite) direct sums
	(use 
	part \ref{iprkwds} of the theorem; a direct proof is easy also).

\item\label{irwild4}
 Certainly, parts  \ref{iprkw1}--\ref{iprkwcomp} of  Theorem \ref{tprkw} imply that the sum of two morphisms $M\to N$ killing weights $m,\dots,n$ kills these weights also; a direct proof of this fact is  very easy as well.

\item\label{irwildeal}
Similarly to part \ref{iprkwcomp} of our theorem one can easily prove that 
$\mo_{\cancel{[m,n]}}\cu$ is a two-sided ideal of $\mo(\cu)$ (cf. \S1 of \cite{christ}), i.e., that (in addition to 
the additivity properties 
$\mo_{\cancel{[m,n]}}\cu$ that were verified above) for any morphism $j$ composable with $g$ (either from the left or from the right) the corresponding composition kills weights  $m,\dots,n$ also.  

In particular, if $M\in  \cu_{w\notin[m,n]}$ then any $\cu$-morphism from $M$ kills weights $m,\dots,n$.

Moreover, part \ref{iprkwcomp} of the theorem can certainly be re-formulated as follows: $\mo_{\cancel{[m',m-1]}}\circ \mo_{\cancel{[m,n]}}\cu\subset \mo_{\cancel{[m',n]}}\cu$ for any $m'<m$.

\item\label{irwild6} Certainly, part \ref{iprkwfunct}
of the theorem yields that weight-exact functors respect the condition of being without weights $m,\dots,n$, whereas  weight-exact full embeddings "strictly respect" this condition.
Hence weight-exact full embeddings of Karoubian categories also strictly respect the condition of an object to possess a decomposition  avoiding weights  $m,\dots, n$. 
This is also true for  weight-Karoubian categories; see 
 Proposition \ref{pwkar} below.
\end{enumerate}
\end{rema}

Now we relate 
 $\mo_{\cancel{[m,n]}}\cu$ to weights for cohomology and to virtual $t$-truncations. 
We recall that for $I\in \obj \cu$ we write $H_I$ for the cohomological functor $\cu(-,I)$ (on $\cu$).

\begin{theo}\label{tkwhom}

Let $g:M\to N$ be a $\cu$-morphism, $m\le n\in \z$.

I.  The following 
conditions are equivalent.

\begin{enumerate}
\item\label{iwf1} $g$ kills weights $m,\dots,n$.

\item\label{iwf2} $H(g)$ sends $W^{m}(H)(N)$ inside  $W^{n+1}(H)(M)$ for any contravariant functor $H:\cu\to \au$.

\item\label{iwf3} $H(g)$ sends $W^{m}(H_I)(N)$ inside  $W^{n+1}(H_I)(M)$ for 
all $I\in \cu_{w\ge m}$.

\item\label{iwf4} $H(g)=0$ for $H$ being any cohomological functor  ($\cu\to \au$) of $w$-range $[m,n]$.

\item\label{iwf5} $H(g)=0$ for $H=\tau^{\ge-n }(H_I)$ whenever $I\in \cu_{w\ge m}$.

\item\label{iwf0} $H(g)=0$, where $H=\tau^{\ge -n }(H_{I_0})$ for $I_0$ being some  fixed choice of  $w_{\ge m}N$. 

\end{enumerate}

II. The following conditions are equivalent also.
\begin{enumerate}
\item\label{iwf6} $M$ is without weights $m,\dots,n$.

\item\label{iwf7} $H(M)=0$ for $H$ being any cohomological functor ($\cu\to \au$) of $w$-range $[m,n]$.

\item\label{iwf8} $H(M)=\ns$  for $H=\tau^{\ge-n }(H_I)$ 
 whenever $I\in \cu_{w\ge m}$. 

\item\label{iwf9} $H(M)=\ns$, where $H=\tau^{\ge -n }(H_{I_0})$ for $I_0$ being some  fixed choice of  $w_{\ge m}N$. 

\end{enumerate}

\end{theo}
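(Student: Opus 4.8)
The plan is to prove Part~I by running two cycles of implications, both passing through condition~\ref{iwf1} (killing weights), and then to deduce Part~II by specializing to $g=\id_M$ and $N=M$. For the first cycle \ref{iwf1}$\Rightarrow$\ref{iwf2}$\Rightarrow$\ref{iwf3}$\Rightarrow$\ref{iwf1}, I would prove \ref{iwf1}$\Rightarrow$\ref{iwf2} using condition~\ref{ikw5} of Proposition~\ref{pkillw}: killing weights yields a commutative square with $\beta\circ g=j\circ\alpha$, where $\beta\colon N\to w_{\ge m}N$ and $\alpha\colon M\to w_{\ge n+1}M$ are the weight-decomposition morphisms. Then for $x=H(\beta)(y)\in W^m(H)(N)$ one computes $H(g)(x)=H(\beta\circ g)(y)=H(\alpha)(H(j)(y))\in\operatorname{Im}(H(\alpha))=W^{n+1}(H)(M)$. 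The step \ref{iwf2}$\Rightarrow$\ref{iwf3} is trivial, and for \ref{iwf3}$\Rightarrow$\ref{iwf1} I would take $I=w_{\ge m}N$ and feed the element $\beta\in W^m(H_I)(N)$ (the image of $\id_I$ under $\cu(w_{\ge m}N,I)\to\cu(N,I)$) through the hypothesis; its image $\beta\circ g$ landing in $W^{n+1}(H_I)(M)$ furnishes exactly the square of condition~\ref{ikw5}, so $g$ kills weights.

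For the second cycle \ref{iwf1}$\Rightarrow$\ref{iwf4}$\Rightarrow$\ref{iwf5}$\Rightarrow$\ref{iwf0}$\Rightarrow$\ref{iwf1}, the step \ref{iwf1}$\Rightarrow$\ref{iwf4} uses \ref{iwf2} (already available): if $H$ is cohomological of weight range $[m,n]$, then Proposition~\ref{pwfil}(\ref{iwfil5}) gives $W^m(H)(N)=H(N)$ by surjectivity, while $H(w_{\ge n+1}M)=\ns$ forces $W^{n+1}(H)(M)=\ns$, so \ref{iwf2} collapses to $H(g)=0$. For \ref{iwf4}$\Rightarrow$\ref{iwf5}, Proposition~\ref{pwfil}(\ref{iwfil3}) shows $H_I$ has weight range $\ge m$ for $I\in\cu_{w\ge m}$, and Proposition~\ref{pwfil}(\ref{iwfil4}) promotes $\tau^{\ge -n}(H_I)$ to a cohomological functor of weight range $[m,n]$, to which \ref{iwf4} applies. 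The step \ref{iwf5}$\Rightarrow$\ref{iwf0} is trivial (take $I=I_0=w_{\ge m}N$).

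The crux, and where I expect the main obstacle, is \ref{iwf0}$\Rightarrow$\ref{iwf1}. I would set $H=\tau^{\ge -n}(H_{I_0})$ with $I_0=w_{\ge m}N$, so that $H(M)$ is a \emph{subobject} of $\cu(w_{\le n}M,I_0)$ and $H(N)$ a subobject of $\cu(w_{\le n}N,I_0)$. With $\beta\colon N\to w_{\ge m}N=I_0$ the weight-decomposition morphism, the composite $w_{\le n}N\to N\xrightarrow{\beta}I_0$ extends along $w_{\le n}N\to w_{\le n+1}N$ via $w_{\le n+1}N\to N$, hence defines an element $\bar\beta\in H(N)$. Unwinding the functoriality of the virtual $t$-truncation (Proposition~\ref{pwfil}(\ref{iwfil1})) against the morphism of weight decompositions induced by $g$, the element $H(g)(\bar\beta)$ is precisely the morphism $w_{\le n}M\to M\xrightarrow{g}N\xrightarrow{\beta}w_{\ge m}N$ inside $\cu(w_{\le n}M,I_0)$; so $H(g)=0$ says this composite vanishes, which is exactly condition~\ref{ikw1} of Proposition~\ref{pkillw}. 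The delicate point is verifying that the abstract functorial map $H(g)$ genuinely coincides with this concrete composite, which relies on the compatibility of the connecting morphisms of a weight Postnikov tower (Remark~\ref{rcompl}) and on $H(M)$ being a subobject rather than a quotient of $\cu(w_{\le n}M,I_0)$.

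Finally, Part~II is immediate from Part~I by taking $g=\id_M$ and $N=M$: the equivalences \ref{iwf1}$\Leftrightarrow$\ref{iwf4}$\Leftrightarrow$\ref{iwf5}$\Leftrightarrow$\ref{iwf0} specialize to \ref{iwf6}$\Leftrightarrow$\ref{iwf7}$\Leftrightarrow$\ref{iwf8}$\Leftrightarrow$\ref{iwf9}, since for $g=\id_M$ one has $H(\id_M)=\id_{H(M)}$, which vanishes if and only if $H(M)=\ns$.
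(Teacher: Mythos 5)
Your proposal is correct and follows essentially the same route as the paper: the same two cycles of implications (\ref{iwf1}$\Rightarrow$\ref{iwf2}$\Rightarrow$\ref{iwf3}$\Rightarrow$\ref{iwf1} and \ref{iwf1}$\Rightarrow$\ref{iwf4}$\Rightarrow$\ref{iwf5}$\Rightarrow$\ref{iwf0}$\Rightarrow$\ref{iwf1}), the same witness element (your $\bar\beta$ is the paper's element $r$ of $\tau^{\ge -n}(H_{I_0})(N)$ obtained by composing connecting morphisms), and the same deduction of Part~II by specializing to $g=\id_M$. The only (harmless) deviation is in \ref{iwf1}$\Rightarrow$\ref{iwf4}, where you route through condition~\ref{iwf2} and the vanishing of $H(w_{\ge n+1}M)$ instead of the paper's combination of the surjectivity/injectivity statements of Proposition~\ref{pwfil}(\ref{iwfil5}) with the vanishing of the composite $w_{\le n}M\to w_{\ge m}N$.
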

\begin{proof}

Certainly, condition  I.\ref{iwf2} implies condition  I.\ref{iwf3}. Next,  I.\ref{iwf4} implies condition  I.\ref{iwf5} by Proposition \ref{pwfil}(\ref{iwfil4}), and the latter condition certainly implies condition  I.\ref{iwf0} (see Proposition \ref{pwfil}(\ref{iwfil3})). 

 Now assume  that $g$ kills weights $m,\dots,n$. Then we have a commutative diagram
\begin{equation}\begin{CD} \label{ekwn}
 M @>{}>>w_{\ge n+1}M
\\
@VV{
g}V@VV{j}V\\
N @>{d}>>w_{\ge m}N 
\end{CD}\end{equation}
(it does not matter here whether we fix some choices of the rows or not). Applying $H$ to this diagram, we obtain condition  I.\ref{iwf2}.

Now fix some choice of the rows of (\ref{ekwn}) and take $I$ being (any choice of) $w_{\ge m}N$. Assume that $g$ fulfils condition  I.\ref{iwf3}; then
the morphism $d\circ g$  belongs to the image of $\cu(w_{\ge n+1}M,w_{\ge m}N)$  in $\cu(M,w_{\ge m}N)$. Thus $g$ kills weights $m,\dots,n$ (see Proposition \ref{pkillw}(\ref{ikw3})).

It remains to deduce condition  I.\ref{iwf4} from  I.\ref{iwf1}, and deduce the latter one from condition   I.\ref{iwf0}.

Assume  that $g$ kills weights $m,\dots,n$. If $H$ is a cohomological functor of $w$-range $[m,n]$ then the morphism $H(w_{\ge m}N)\to H(N)$ is surjective and the morphism  
$H(M)\to H(w_{\le n}M)$ is injective (for any choices of the corresponding weight decompositions; see Proposition \ref{pwfil}(\ref{iwfil5})).  Since the composed morphism $a:w_{\le n}M\to w_{\ge m}N$ is zero (see Proposition \ref{pkillw}(\ref{ikw7}; thus $H(a)=0$ also), we obtain  condition  I.\ref{iwf4}. 

Now assume that  condition   I.\ref{iwf0} is fulfilled. Consider the element $r$ of the group $ \tau^{\ge -n }(H_{I_0})(N)=\imm(\cu(w_{\le n+1}N,w_{\ge m}N)\to \cu(w_{\le n}N,w_{\ge m}N)) $ obtained by 
composing the corresponding connecting morphisms. Since $r$ vanishes in  $\tau^{\ge -n }(H_{I_0})(M)\subset \cu(w_{\le n}M,w_{\ge m}N)$, the composed morphism $a\in \cu(w_{\le n}M, w_{\ge m}N)$ is zero and we obtain condition  I.\ref{iwf1}.

II. Immediate from assertion I.

\end{proof}

\begin{rema}\label{rkwmot}

\begin{enumerate}
\item\label{irkwmot1}

 Certainly, if there exists a $t$-structure $t$ adjacent to $w$ (see  Definition \ref{dtstr}(II)) then the cohomology functor $H$ in parts I.\ref{iwf0} and II.\ref{iwf9} of our Theorem is isomorphic to $\cu(-,t^{\ge -n}(I_0))$ (see Remark \ref{rtst}(3)); note also that $t^{\ge -n}(I_0)\in \cu[m,n]$. We will apply this observation in \S\ref{stp} below. 

More generally, here one may consider a certain $\cu'\supset \cu$ endowed with a $t$-structure $t'$ that is {\it orthogonal} to $w$ (with respect to $\cu'(-,-)$, i.e.,
$\cu_{w\ge 0}\perp_{\cu'}\cu'^{t'\ge 1}$ and $\cu_{w\le 0}\perp_{\cu'}\cu'^{t'\le -1}$; see 
Definition 5.2.1 of \cite{bpure} or Definition 2.5.1(3) of \cite{bger});
 then the obvious analogue of this statement is valid by Proposition 2.5.4(1) of ibid.

\item\label{irkwmot2} We are (mostly) interested in "complicated" triangulated categories (yet see Remark \ref{rtst}(5)). So  for $g\in \cu(M,N)$ it can be quite difficult to check whether $g$ kills weights $m,\dots,n$ (or if 
$M\in \cu_{w\notin[m,n]}$) using (any of the versions of) the definition of these notions. Yet Theorem \ref{tkwhom} yields a way of checking that $g$  
{\bf does not} kill weights $m,\dots,n$ (resp.  $M\notin \cu_{w\notin[m,n]}$) by looking at a single  cohomology theory on $\cu$; certainly, one can choose an  "easily computable" one here.

A more complicated problem is to find a  "reasonable" detecting family of cohomology theories (see Remark \ref{rkwsd}(2)).  
Certainly, $\cu[m,n]$ yields a collection of this sort if a $t$-structure adjacent to $w$ exists (see part 1 of this remark); we will use this observation in \S\ref{sshtop} below.
Yet one can be interested in finding a smaller collection of  ("nice") detector theories that does not depend on the choice of $N$ (whereas 
 $\tau^{\ge -n }(H_{I_0})$ can be somewhat difficult to compute).

\item\label{irkwmot3} In several previous papers of the author (see also \cite{hebpo}) certain {\it Chow} weight structures 
for various motivic categories were constructed; their hearts were certain categories of Chow motives (and their coproducts). Note that in all of these examples there exists a "big" motivic category $\dm$ 
that is closed with respect to all coproducts; inside it there is its  subcategory $\dmc$ of compact objects (whose objects are usually called constructible or geometric motives). Next, inside $\dmc$ there is an (additive Karoubian) negative category $\bu$ of the corresponding Chow motives that strongly generates it. Thus we have a "geometric" Chow weight structure $\wchowc$ for $\dmc$ (whose heart is just $\bu$)  and the "big" Chow structure $\wchow$ for $\dm$ (see 
  \S2.3 of \cite{bkl} 
	or Remark \ref{rtst}(4); certainly the embedding $\dmc\to \dm$ is weight-exact).  Whereas one is usually interested in 
  $\wchowc$ only, it appears that the corresponding adjacent structure $\tchow$  (see the aforementioned remark once again) does not (usually) restrict to $\dmc$ (note also that $\hrt_{Chow}$ is isomorphic to the category of additive contravariant functors $\bu\to \ab$). Now, to detect whether a $\dmc$-morphism $g$ kills 
weights $m,\dots,n$ it suffices to compute $H_I(g)$ for $I$ running through 
$t^{\ge -n}(\dmc_{\wchow \ge m})\subset \dm[m,n]$.

\item\label{irkwmot4} Still we would certainly prefer to use some "classical" cohomology of motives instead. We try to describe the corresponding picture here.

First we recall that for motives over  a field the 
 (Chow-)weight filtrations 
for \'etale and singular cohomology (with rational coefficients) for (motives of) 
varieties differ from Deligne's ones only by a shift; see Remark 2.4.3 of \cite{bws} 
 and Proposition 4.3.1 of \cite{bkl}. Moreover, the weights of \'etale (co)homology conjecturally "detect weights" of $\q$-linear motives over an arbitrary base; see Proposition 3.3.1(4) of \cite{bmm}. 
 Yet this does not imply that \'etale (or singular) cohomology detects whether a morphism of motives kills certain weights; in particular, note that a non-zero morphism of Chow motives can certainly yield zero on cohomology. One can only hope that composing a "long" chain of motivic morphisms that kill certain 
 weights
in \'etale cohomology necessarily yields a morphism that kills weights in a certain range (in the sense of our definition). The situation is somewhat better for mixed Tate motives (see Remark \ref{rdwss}(2) below).

\item\label{irkwmot5} On the other hand, singular and \'etale cohomology of motives conjecturally detects whether a given motif is without weights $m,\dots,n$ (cf. Theorem 1.11 of \cite{wildn}); Wildeshaus has also established 
several non-conjectural cases of this statement unconditionally (see Theorem 3.4 of \cite{wildn} and Theorem 1.13(d) of \cite{wildshim}).

\item\label{irkwmot6} Applying categorical duality one certainly obtains that  
a morphism $g\in \cu(M,N)$ belongs to  $\mo_{\cancel{[m,n]}}\cu$ 
if and only if $H(g)=0$, where  $H:\cu\to \ab$ is the homological functor $O\mapsto \imm( \cu(w_{\le n}M, w_{\ge m-1}O)\to \cu(w_{\le n}M, w_{\ge m}O))$.

\item\label{irkwmot7} Certainly, part I of our theorem implies parts \ref{iprkw1} and \ref{iprkwds} of Theorem \ref{tprkw} (cf. also Remark \ref{rwild}(\ref{irwildeal})); so we obtain an alternative proof of these statements.

\end{enumerate}

\end{rema}

\subsection{Relation to the weight complex functor (and its conservativity)}\label{skwwc} 

Now we relate the properties studied in the previous subsection with the weight complex functor.

\begin{pr}\label{pwckill}
Let $g\in \cu(M,N)$ (for some $M,N\in \obj \cu$); $m\le n\in \z$.
Then the following statements are valid.

\begin{enumerate}
\item\label{iwckill1} $g$ kills weight $m$ if and only if $t(g)\backsim_{[-m,-m]} 0$ (in the notation of Remark \ref{rwc}(\ref{irwc3}); recall that this property does not depend on the choice of $t(f)$). 

\item\label{iwckill2} If 
$f_i$ for $n\ge i\ge m$ is a chain of composable $\cu$-morphism such that $t(f_i)\backsim_{[-i,-i]}0$ for all $i$, then $f_{n}\circ f_{n-1}\circ\dots \circ f_m$ kills weights  $m,\dots, n$. 

\item\label{iwckill3} $M$ is without weights $m,\dots, n$ if and only if  $t(\id_M)\backsim_{[-n,-m]}0$. 

\end{enumerate}
\end{pr}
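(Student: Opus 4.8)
The plan is to treat part (\ref{iwckill1}) as the crux and to obtain parts (\ref{iwckill2}) and (\ref{iwckill3}) formally from it. For (\ref{iwckill1}) I would fix $m$ and test the criterion of Proposition~\ref{pkillw}(\ref{ikw1}): $g$ kills weight $m$ if and only if for some choice the composite $\phi\colon w_{\le m}M\to M\stackrel{g}{\to}N\to w_{\ge m}N$ vanishes. The first step is to rewrite $\phi$ through the weight complex. I would choose weight Postnikov towers for $M$ and $N$ together with a compatible lift $\tilde g\colon w_{\le m}M\to w_{\le m}N$ of $g$ (Proposition~\ref{pbw}(\ref{icompl}), as in Definition~\ref{dwc}); let $p_M\colon w_{\le m}M\to M^{-m}[m]$ be the projection coming from the triangle (\ref{wdeck3}), and, applying the octahedral axiom to $w_{\le m-1}N\to w_{\le m}N\to N$ (using the $(m-1)$- and $m$-weight decompositions of $N$ and (\ref{wdeck3})), produce a distinguished triangle $N^{-m}[m]\stackrel{i_N}{\to}w_{\ge m}N\to w_{\ge m+1}N$. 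A short diagram chase then yields the key identity $\phi = i_N\circ(t(g)^{-m}[m])\circ p_M$, where $t(g)^{-m}\colon M^{-m}\to N^{-m}$ is the corresponding component of the weight complex.

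With this identity, the implication ``$t(g)\backsim_{[-m,-m]}0$ $\Rightarrow$ $g$ kills weight $m$'' is quick. By Remark~\ref{rwc}(\ref{irwc3}) the relation $t(g)\backsim_{[-m,-m]}0$ says precisely that the chosen representative satisfies $t(g)^{-m}=d_N^{-m-1}\circ h_1+h_2\circ d_M^{-m}$ for some $\hw$-morphisms $h_1,h_2$. I would then verify the two vanishings $d_M^{-m}[m]\circ p_M=0$ and $i_N\circ d_N^{-m-1}[m]=0$: the first because $p_M$ is immediately followed by the connecting map of its own triangle, which is the first factor of $d_M^{-m}$; the second because $d_N^{-m-1}$ factors through the map $w_{\le m}N[-m]\to N^{-m}$, which becomes null after the inclusion $i_N$ into $w_{\ge m}N$ (again by consecutivity in a triangle). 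Substituting the boundary expression for $t(g)^{-m}$ into the key identity then annihilates both summands, so $\phi=0$.

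The converse is the main obstacle, and is where I expect the real work. Assuming $i_N\circ(t(g)^{-m}[m])\circ p_M=0$, I would recover the boundary form of $t(g)^{-m}$ by two successive lifting steps governed by orthogonality. First, vanishing after precomposing with $p_M$ forces $i_N\circ t(g)^{-m}[m]$ to factor through the connecting map $\partial_M\colon M^{-m}[m]\to w_{\le m-1}M[1]$; since the target $w_{\ge m}N$ lies in $\cu_{w\ge m}$, this factorization passes through the top weight-$m$ piece $M^{-m+1}[m]$ of $w_{\le m-1}M[1]$ (Proposition~\ref{pbw}(\ref{ifact}),(\ref{iwd0})), and the resulting composite $M^{-m}[m]\to M^{-m+1}[m]$ is exactly $d_M^{-m}[m]$. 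Using $\cu_{w\le m}\perp\cu_{w\ge m+1}$ to lift the map $M^{-m+1}[m]\to w_{\ge m}N$ along $i_N$, I obtain $h_2$ with $i_N\circ(t(g)^{-m}-h_2\circ d_M^{-m})[m]=0$. A dual lifting for the leftover map $t(g)^{-m}-h_2\circ d_M^{-m}$, now factoring through the bottom piece $N^{-m-1}[m]$ of $w_{\ge m+1}N[-1]$ and identifying the relevant connecting morphism with $d_N^{-m-1}$, produces $h_1$ with $t(g)^{-m}-h_2\circ d_M^{-m}=d_N^{-m-1}\circ h_1$; this is exactly $t(g)\backsim_{[-m,-m]}0$. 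The delicate points are the two identifications of octahedral/Postnikov connecting maps with the weight-complex differentials $d_M^{-m}$ and $d_N^{-m-1}$ (including signs and the left-versus-right weight complex compatibility flagged in Remark~\ref{rwc}(\ref{irwc5})); these are the steps I would write out in full.

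Parts (\ref{iwckill2}) and (\ref{iwckill3}) then follow formally. For (\ref{iwckill2}), part (\ref{iwckill1}) shows each $f_i$ kills weight $i$, and gluing one weight at a time along the chain via Theorem~\ref{tprkw}(\ref{iprkwcomp}) (applied inductively) gives that the composite kills weights $m,\dots,n$. For (\ref{iwckill3}), Remark~\ref{rwc}(\ref{irwc3}) makes $t(\id_M)\backsim_{[-n,-m]}0$ equivalent to $t(\id_M)\backsim_{[-i,-i]}0$ for every $i\in[m,n]$, hence by part (\ref{iwckill1}) to $\id_M$ killing each single weight $i$; one direction is then Theorem~\ref{tprkw}(\ref{iprkw1}) (killing $[m,n]$ implies killing each $[i,i]$) and the other is part (\ref{iwckill2}) with all $f_i=\id_M$, yielding that $\id_M$ kills $m,\dots,n$, i.e. $M\in\cu_{w\notin[m,n]}$.
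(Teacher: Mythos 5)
Your proposal is correct, and its overall skeleton coincides with the paper's: part (\ref{iwckill1}) is the crux, part (\ref{iwckill2}) follows by induction from Theorem \ref{tprkw}(\ref{iprkwcomp}), and part (\ref{iwckill3}) is deduced from the first two via the splitting property of $\backsim_{[k,l]}$ stated in Remark \ref{rwc}(\ref{irwc3}) together with the composition $\id_M^{\circ(n-m+1)}$ (your forward direction of (\ref{iwckill3}) goes weight-by-weight through Theorem \ref{tprkw}(\ref{iprkw1}) and part (\ref{iwckill1}), whereas the paper invokes Proposition \ref{pkillw}(\ref{ikw9}) once to kill all components in the range simultaneously; both are fine). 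Where you genuinely diverge is the proof of part (\ref{iwckill1}). The paper disposes of it in three lines by citing conditions \ref{ikw9} and \ref{ikw9f} of Proposition \ref{pkillw}: in the diagram (\ref{edoublecone}) with $m=n$ the cones $\co(a)$ and $\co(b)$ are exactly $M^{-m}[m]$ and $N^{-m}[m]$, and the induced map on cones is a choice of $t(g)^{-m}[m]$, so "the cone map may be taken to be zero" is literally "some choice of $t(g)$ has vanishing $(-m)$-component"; Remarks \ref{rwc}(\ref{irwc3},\ref{irwc4}) then convert this into $t(g)\backsim_{[-m,-m]}0$. You instead re-derive this content by hand, from conditions \ref{ikw1}/\ref{ikw3}/\ref{ikw7} only: the factorization $\phi=i_N\circ(t(g)^{-m}[m])\circ p_M$ obtained from the octahedron, the two vanishings $d_M^{-m}[m]\circ p_M=0$ and $i_N\circ d_N^{-m-1}[m]=0$ for the "if" direction, and the two orthogonality liftings for the "only if" direction. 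I checked that this route closes: the identifications you flag as delicate do hold, since the composite $N^{-m-1}[m]\to w_{\ge m+1}N[-1]\to N^{-m}[m]$ equals $d_N^{-m-1}[m]$ up to sign by the standard octahedron compatibility (the comparison map of cones composed with the boundary of the cone-of-composite triangle equals the boundary of the first triangle), and the sign ambiguity is harmless because it can be absorbed into $h_1$ and $h_2$ in the boundary expression. The trade-off: the paper's argument is short precisely because Proposition \ref{pkillw}(\ref{ikw8}--\ref{ikw9f}) was engineered for this purpose, and its proof already contains the long-exact-sequence manipulations you repeat; your argument duplicates that work but is self-contained at this point and makes the geometric content explicit, namely that the composite $w_{\le m}M\to w_{\ge m}N$ is controlled exactly by the single weight-complex component $t(g)^{-m}$.
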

\begin{proof}

\begin{enumerate}
\item If $g$ kills weight $m$ then we can choose $t(g)$ such that the component $t(g)^{-m}$ is zero  (see Proposition \ref{pkillw}(\ref{ikw9})). Conversely, if $t(g)\backsim_{[-m,-m]} 0$ then we can assume $t(g)^{-m}=0$ (see Remark \ref{rwc}(\ref{irwc4})); hence  $g$ kills weight $m$ (see Proposition \ref{pkillw}(\ref{ikw9f})).

\item Immediate from the previous assertion combined with Theorem \ref{tprkw}(\ref{iprkwcomp}). 

\item If $M\in \cu_{w\notin[m,n]}$ 
  then  we can choose $t(\id_M)$ so that $t(\id_M)^i= 0$ for all $i$ between $-n$ and $-m$ (this is an easy consequence of Proposition \ref{pkillw}(\ref{ikw9})); hence 
$t(\id_M)\backsim_{[-n,-m]}0$. Conversely, if  $t(\id_M)\backsim_{[-n,-m]}0$, then $t(\id_M)\backsim_{[i,i]} 0$ for all $i$ between $-m$ and $-n$; thus applying the previous assertion to 
 the composition $id_{M}^{\circ n-m+1}$ we obtain that $M$ is without weights $m,\dots, n$ . 
\end{enumerate}

\end{proof}

\begin{rema}\label{rwildwc}
1. If $M$ possesses a decomposition 
  avoiding weights  $m,\dots, n$ then Proposition \ref{pbwcomp}(\ref{iwc3})  implies that a similar decomposition exists for $t(M)$ (i.e., we can assume that the weight complexes of $X$ and $Y$ in (\ref{ewild}) are concentrated in degrees $\ge 1-m$ and $\le -1-n$, respectively). Next, if $t(M)$ possesses a decomposition 
	 satisfying this condition then  $M\in \cu_{w\notin[m,n]}$ 
	 by part \ref{iwckill3} of our proposition. 
	Hence these three conditions are equivalent if $\cu$ is Karoubian; see Theorem \ref{tprkw}(\ref{iwildef2}).

2. Certainly, part \ref{iwckill2} of our proposition is a vast generalization of (the nilpotence statement in) Theorem 3.3.1(II). 

3. 
A rich collection of triangulated categories endowed with weight structures can be described using {\it twisted complexes} (in the sense defined in \cite{bk}) over a {\it negative differential graded category} $C$ (see \S6.1--2 of \cite{bws}). We recall that for $\cu=\tr(C)$ 
a $\cu$-morphism $h$ between two twisted complexes   
$(P^i,q_{ij}),(P'^i,q'_{ij})\in \obj\cu$ is given by a certain collection of arrows $h_{ij}\in
C^{i-j}(P^i,P'^j)$ (satisfying certain "closedness" conditions and considered up to "homotopies" of a certain sort);  we have   $h_{ij}=0$ if $i>j$ since $C$ is negative. Then one can easily check that $h$ kills weights $m,\dots,n$ if and only if it is homotopic to a morphism $h'$ such that $h'_{ij}=0$ (also) if $-n\le i\le j\le m$. 

Next, the composition of morphisms in  $ \tr(C)$ is given by the "obvious" (i.e., the "matrix-like") compositions of the corresponding collections of arrows (so, it does not take the differentials in $C$ into account). Thus  Theorem \ref{tprkw}(\ref{iprkwcomp})  (cf. also Remark \ref{rwild}(\ref{irwildeal})) in this case reduces to the corresponding trivial property of (lower triangular) matrices.

Lastly, note that $t(h)$ (in this case) can be described by the collection $h_{ii}$ (for $i\in \z$); this gives an illustration for part \ref{iwckill1} of Proposition \ref{pwckill}.

4. Combining part \ref{iwckill1} of our proposition with  Theorem \ref{tkwhom}(I.\ref{iwf0}) and applying them to the stupid weight structure for $K(\bu)$ one can re-prove Theorem 2.1 of \cite{barrabs}; the corresponding detector functor coincides with the one described in loc. cit.  (cf. Remark \ref{rtst}(6)).
\end{rema}

Now we are able to improve the ("partial") conservativity property of weight complexes given by Theorem 3.3.1(V) of \cite{bws}. We need some definitions; our choice of conventions is motivated by the fact  that we consider  cohomological complexes only (in this paper).

\begin{defi}\label{dwdegen}
1. We will call the elements of $\cap_{i\in \z}\cu_{w\le i}$ (resp. of  $\cap_{i\in \z}\cu_{w\ge i}$) {\it  right degenerate} 
(resp. {\it left degenerate}).

2. We will say that $w$ is {\it non-degenerate} if $\cap_{i\in \z}\cu_{w\le i}=\cap_{i\in \z}\cu_{w\ge i}=\ns$.

3. We will say that $M\in \obj \cu$ is  {\it $w$-degenerate} (or weight-degenerate) if  $t(M)\backsim 0$.

\end{defi}

Now we relate $w$-degenerate objects to  right and left degenerate ones.

\begin{theo}\label{tdegen}
Let $g:M\to M'$ be a $\cu$-morphism. Then the following statements are valid.

I.1. $t(g)$ is an isomorphism if and only if $\co(g)$ is a $w$-degenerate object.

2. Any extension of a left   degenerate  object of $\cu$ by a  right degenerate one is $w$-degenerate.

3. If $M$ is an extension of a  left degenerate  object by an element of $\cu_{w\le 0}$ (resp. is an extension of an element of $\cu_{w\ge 0}$ by a right degenerate  object) then  $t(M)\backsim^w 0$  (resp. $t(M)\backsim_w 0$; see Remark \ref{rwc}(\ref{irwc6})). 

II. If $\cu$ is Karoubian then the statements converse to assertions I.2 and I.3 are also valid. Being more precise,     $M$ is $w$-degenerate if and only if it is an extension of a left degenerate  object by a right  degenerate one; 
 $t(M)\backsim^w 0$  (resp. $t(M)\backsim_w 0$) if and only if  $M$ is an extension of a  left degenerate   object by an element of $\cu_{w\le 0}$ (resp. is an extension of an element of $\cu_{w\ge 0}$ by a  right degenerate   object).

III. If $\cu$ contains no non-trivial left degenerate  (resp. right degenerate) objects then its class of weight-degenerate objects coincides with the one of 
 right degenerate   (resp. left degenerate) ones. In particular, if $w$ is non-degenerate then $t(M)\neq 0$ for non-zero objects.
\end{theo}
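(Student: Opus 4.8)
The plan is to reduce the whole theorem to two inputs: the behaviour of weight complexes under cones (Proposition \ref{pbwcomp}(\ref{iwccone})) and the dictionary between the relations $\backsim_w0$, $\backsim^w0$, $\backsim0$ and killing weights (Proposition \ref{pwckill}). First I would record the basic computation that a right degenerate object $X\in\cap_i\cu_{w\le i}$ has a contractible weight complex. Indeed, a shifted form of Proposition \ref{pbwcomp}(\ref{iwc3}) shows that $X\in\cu_{w\le i}$ makes some choice of $t(X)$ homotopy equivalent to a complex concentrated in degrees $\ge -i$, whence $\id_{t(X)}\backsim_{[-\infty,-i-1]}0$; letting $i$ run over all integers and applying the interpolation property of Remark \ref{rwc}(\ref{irwc3}) gives $\id_{t(X)}\backsim_{[j,j]}0$ for every $j$, so $t(X)\backsim0$, i.e. $t(X)$ is contractible (Remark \ref{rwc}(\ref{irwc2})); the left degenerate case is dual. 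With this, part I.2 is immediate: for $X\to M\to Y$ with $X$ right and $Y$ left degenerate, Proposition \ref{pbwcomp}(\ref{iwccone}) realises a choice of $t(M)$ as $\co(t(Y)[-1]\to t(X))$, and a cone of a map between contractible complexes is contractible. For I.3 the same cone formula applies: if $X\in\cu_{w\le0}$ (so $t(X)\backsim^w0$ by Proposition \ref{pbwcomp}(\ref{iwc3})) and $Y$ is left degenerate (so $t(Y)[-1]$ is contractible), then the cone is homotopy equivalent to $t(X)$, giving $t(M)\backsim^w0$; the parenthetical case is dual.

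For I.1 I would again fix compatible choices so that $\co(t(g))$ is a weight complex of $\co(g)$ (Proposition \ref{pbwcomp}(\ref{iwccone})); then $\co(g)$ is $w$-degenerate exactly when $\co(t(g))$ is contractible. If that cone is contractible it is zero in $K(\hw)$, so the chosen $t(g)$ is an isomorphism there and hence in $K_w(\hw)$. The reverse implication --- that a morphism invertible in $K_w(\hw)$ has a weakly contractible cone --- is the one delicate point of part I; I would prove it by taking $s$ with $s\,t(g)\backsim\id$ and $t(g)\,s\backsim\id$ and assembling from these data an explicit weak contraction of $\co(t(g))$, using the contractibility criterion of Proposition 3.1.8(1) of \cite{bws}.

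The substantive content is the converse direction II, where I would argue by a uniqueness-and-limit procedure. Suppose $t(M)\backsim0$. By Proposition \ref{pwckill}(\ref{iwckill1}) this says $\id_M$ kills every weight, and Proposition \ref{pwckill}(\ref{iwckill3}) (combined with Remark \ref{rwc}(\ref{irwc3})) then gives $M\in\cu_{w\notin[m,n]}$ for all $m\le n$. Since $\cu$ is Karoubian, Theorem \ref{tprkw}(\ref{iwildef2}) furnishes, for each such range, a decomposition $X_{m,n}\to M\to Y_{m,n}$ avoiding weights $m,\dots,n$, so $X_{m,n}\in\cu_{w\le m-1}$ and $Y_{m,n}\in\cu_{w\ge n+1}$. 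The key point is that whenever $[m',n']\subseteq[m,n]$ the $[m,n]$-avoiding triangle is automatically an $[m',n']$-avoiding one, so by the uniqueness of Theorem \ref{tprkw}(\ref{iwildef1}) it is canonically isomorphic to the $[m',n']$-avoiding triangle. Fixing the reference range $[0,0]$ and letting $m\to-\infty$, $n\to+\infty$, this forces $X_{0,0}\in\cap_i\cu_{w\le i}$ (right degenerate) and $Y_{0,0}\in\cap_i\cu_{w\ge i}$ (left degenerate), exhibiting $M$ as the asserted extension. The $\backsim^w0$ case is identical using only the ranges $[1,n]$ (yielding $X\in\cu_{w\le0}$ and $Y$ left degenerate), and the $\backsim_w0$ case is dual.

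Finally III follows formally from I and II: if $\cu$ has no non-trivial left degenerate object, then the $Y$ produced by II vanishes, so every $w$-degenerate $M$ equals its right degenerate part $X$, while the reverse inclusion is I.2; the ``resp.'' statement is dual, and non-degeneracy gives both, so $t(M)\backsim0\Leftrightarrow M=0$. I expect the main obstacle to be twofold: the weak-homotopy cone characterisation required for the hard half of I.1, and --- for III in the stated generality (without assuming $\cu$ Karoubian) --- transporting the argument to the idempotent completion via \cite{bonspkar}, where one must verify that the absence of non-trivial degenerate objects survives passage to retracts.
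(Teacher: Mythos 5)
Your proposal is correct, and its core coincides with the paper's own argument. In particular, for part II (the substantive half) you use exactly the paper's mechanism: contractibility statements about $t(M)$ translate into $M\in\cu_{w\notin[m,n]}$ via Proposition \ref{pwckill}, Karoubianness produces avoiding decompositions by Theorem \ref{tprkw}(\ref{iwildef2}), and the uniqueness statement of Theorem \ref{tprkw}(\ref{iwildef1}) identifies all of them, forcing the components to become degenerate in the limit. The paper writes this out only for the one-sided case $t(M)\backsim^w 0$ (ranges $[1,n]$, making $Y_1$ left degenerate); your two-sided version with ranges $[m,n]$, $m\le 0\le n$, is what the ``extension of a left degenerate object by a right degenerate one'' claim actually requires, so you are if anything more explicit. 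Parts I.2--3 also match, though the paper obtains contractibility of weight complexes of degenerate objects more directly: for $N$ right (resp.\ left) degenerate one may take $w_{\le l}N=N$ (resp.\ $0$) for all $l$, so that the zero complex is a legitimate choice of $t(N)$ --- shorter than your interpolation argument, which is nevertheless valid. Two remarks on the obstacles you flag. First, for the hard half of I.1 you need not assemble an explicit weak contraction of $\co(t(g))$: if $s\,t(g)\backsim \id$ and $t(g)\,s\backsim \id$, then both composites are automorphisms in $K(\hw)$ by Proposition 3.1.8(1) of \cite{bws} (precisely the fact invoked in Remark \ref{rwc}(\ref{irwc6})), whence $t(g)$ is invertible already in $K(\hw)$ and its cone is contractible; the paper declares this step ``immediate'', so your caution is legitimate but easily discharged. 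Second, for III without Karoubianness the paper forward-references Corollary \ref{cwkar} (proved later via weight-Karoubian extensions), which expresses a degenerate $M$ as a $\cu$-retract of an extension of degenerate objects of $\cu$ itself; your alternative --- checking that the absence of non-trivial left (resp.\ right) degenerate objects survives passage to $\kar(\cu)$ --- also works, and by the same Thomason trick used in that corollary: a left degenerate $N'$ of $\kar(\cu)$ satisfies $N'\bigoplus N'[1]\in\obj\cu$ (Lemma 2.2 of \cite{thom}), this object is left degenerate in $\cu$ since the classes $\cu_{w\ge i}$ are Karoubi-closed in $\cu$, hence it vanishes, and hence $N'=0$.
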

\begin{proof}
I.1. Immediate from Proposition \ref{pbwcomp}(\ref{iwc3},\ref{iwccone}).

2. If $N$ is  left (resp. right) degenerate   then we can take $w_{\le l}N=0$ (resp. $w_{\le l}N=N$) for all $l\in \z$.  
Hence  one of the choices of $t(N)$ is $0$; 
thus the same is true for extensions in question according to the previous assertion. 

3. Immediate from assertion I.1 combined with  \ref{pbwcomp}(\ref{iwc3}).

II. We investigate 
when $t(M)\backsim^w 0$. 

 For any $n>0$ we have $\id_{t(M)}\backsim_{[-n,-1]}0$ (see  Remark \ref{rwc}(\ref{irwc6})). 
 
Since $\cu$ is Karoubian, for any $n>0$ there exists a distinguished triangle $X_n\to M \to Y_n$
with $X_n\in \cu_{w\le 0}$, $Y_n\in \cu_{w\ge n+1}$ (see 
Remark \ref{rwildwc}).  All of these triangles are isomorphic to the one for $n=1$ by the uniqueness statement in Theorem \ref{tprkw}
(\ref{iwildef1}). Hence $Y_1$ is left degenerate   
and we obtain a triangle of the sort desired.

III. Certainly, the "in particular" part of the assertions follows from the (combination of the two) remaining statements.
Both of the latter follow immediately from assertion II if $\cu$ if Karoubian. 
Lastly, in the general  case if $M$ is degenerate and $\cu$ contains no non-trivial left degenerate  (resp. right degenerate) objects then $M$ is a retract of a 
 right degenerate   (resp. left degenerate) object by Corollary \ref{cwkar} bellow. 
Hence $M$ is right degenerate   (resp. left degenerate) itself.

\end{proof}

\begin{rema}\label{rcwkar} 
1. So we get a precise answer to the question when $t(g)$ is an isomorphism in the Karoubian case (in the general case one should combine part I.1 of the theorem with 
Corollary \ref{cwkar} below). In particular, the weight complex functor is conservative if $w$ is non-degenerate;
this is a significant improvement of Theorem 3.3.1(V) of \cite{bws} (that states that the restrictions of $t$ to the subcategories of left and right bounded objects of $\cu$ are conservative under this condition).

2. In Corollary \ref{cwkar}  
several equivalent conditions for $t(M)\backsim^w 0$, $t(M)\backsim_w 0$, and $t(M)=0$ (for $\cu$ being not necessarily Karoubian) will be formulated.

3. One can easily check that the motif constructed in Lemma 2.4 of \cite{ayoubconj} is $\wchow$-degenerate; here one can use the fact that $t$ commutes with countable homotopy colimits in this case 
(see Proposition 2.5.1(5) of \cite{bpure}).  On the other hand,  none of the versions of $\dm$  contains non-zero left degenerate  objects (for the  corresponding $\wchow$; see Remark \ref{rkwmot}(\ref{irkwmot3}))  since $\dm$ is Hom-generated by the corresponding Chow motives (in the sense described in \S\ref{snotata}). Thus
there exist non-zero 
 $\wchow$-right degenerate  objects in $\dm$ (at least) when the base scheme 
 is 
a big enough field and the coefficient ring is non-torsion (one can certainly generalize this to motives over other base schemes); a minor modification of the argument from  (Lemma 2.4 of) \cite{ayoubconj}  yields the corresponding example for torsion coefficients also.

\end{rema}

\subsection{The case of the spherical weight structure on the stable homotopy category} 
\label{sshtop}

Now we apply our results to the 
 stable homotopy category $\shtop$ (whose detailed description can be found in \cite{marg}; certainly, it is a Karoubian triangulated category) and the {\it spherical} weight structure for it  is defined in \S4.6 of \cite{bws}.
The sphere spectrum (in $\shtop$) will be denoted by $S^0$. We recall that the category $\bu$ of finite coproducts of 
 $S^0$ satisfies the conditions formulated in Remark \ref{rtst}(4). The corresponding $t$-structure is the Postnikov $t$-structure $t_{Post}$ whose heart is isomorphic to $\ab$; the corresponding $t_{Post}$-homology functor is given by $\pi_0=\shtop(S^0,-)$. Moreover, $\shtop^{t_{Post}\ge -n}$ (for any $n\in\z$) is the class of $n-1$-connective spectra.
 Certainly, $\shtop[0,0]$ contains exactly the Eilenberg-Maclane spectra $HG$,
where $G$ runs through all abelian groups. We will write  $H^i(M,HG)$ for $\shtop(M,HG)$. We also note that  our notation $\shtop[m,n]$ (see Definition \ref{dtstr}(II)) in this case is essentially the one introduced in  \S3.2 of \cite{marg}.

To make the notation in the current paper compatible with the one used earlier, we will use the following (somewhat weird) notation for $H\z$-homology: (for $M\in \obj \shtop$) we will denote $\shtop(S^0,M\wedge H\z[i])$ by $\hsing_i(M)$ (so, it is concentrated in negative degrees if $M$ is a connective spectrum).

Now we recall the main results of \S4.6 of \cite{bws}.

\begin{pr}\label{pshtopo}
Let $M\in \obj \shtop$, $i\in \z$. Then the following statements are valid. 

 \begin{enumerate}
\item\label{it1} $\shtop$ is endowed with a certain {\it spherical} weight structure $\wsp$ such that $\shtop_{\wsp\ge 0}=\shtop^{t_{Post}\le 0}=(\cup_{i< 0}S^0[i])^{\perp}$. 

\item\label{it2} $\shtop_{\wsp= 0}$ consists of all small coproducts of (copies of) $S^0$, whereas $\hw^{sph}$ is equivalent to $\abfr$ (the category of free abelian groups). The comparison functor is given by $\shtop(S^0,-)$; it sends $S^0$ into $\z$. 

\item\label{it3} The weight complex functor $t$  is actually  an exact functor $\shtop\to K(\abfr)\cong D(\ab)$.

\item\label{it4} $H_i(t(M))\cong \hsing_i(M)$.

\item\label{it5} $H^i(M,HG)$ is  naturally isomorphic to the $i$th homology of the complex $H(\ab(X^{-*},G))$ for  any  abelian group $G$.

\item\label{it11} $\wsp$ can be restricted to the (triangulated) subcategory $\shtop_{fin}$ of finite spectra.
\end{enumerate}
\end{pr}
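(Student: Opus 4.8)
The plan is to prove the restriction statement (part \ref{it11}); the preceding parts \ref{it1}--\ref{it5} merely recall \S4.6 of \cite{bws}. To say that $\wsp$ \emph{restricts} to the full triangulated subcategory $\shtop_{fin}\subset\shtop$ means precisely that the classes $\shtop_{fin}\cap\shtop_{\wsp\le 0}$ and $\shtop_{fin}\cap\shtop_{\wsp\ge 0}$ define a weight structure on $\shtop_{fin}$. Axioms (i)--(iii) of Definition \ref{dwstr}(I) are inherited from $\shtop$ at once, since Karoubi-closedness, semi-invariance with respect to $[1]$, and orthogonality are all preserved upon intersecting with a full triangulated subcategory. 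Hence the only thing to establish is the weight decomposition axiom (iv): every finite spectrum $M$ must admit a distinguished triangle $X\to M\to Y$ with $X\in\shtop_{\wsp\le 0}\cap\shtop_{fin}$ and $Y\in\shtop_{\wsp\ge 1}\cap\shtop_{fin}$. The genuine subtlety is that the $\wsp$-weight decompositions furnished by the compact generation machinery of Remark \ref{rtst}(4) a priori live in the big category and need not have finite terms.

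First I would observe that $\shtop_{fin}$ is the Karoubi-closure of $\lan S^0\ra$ in $\shtop$ and that the subcategory $\bu_{fin}$ of finite coproducts of copies of $S^0$ is negative in $\shtop_{fin}$; this is inherited from $\shtop$, where $\shtop(S^0,S^0[i])=\pi_{-i}(S^0)=0$ for $i>0$. Then I would invoke the theorem on weight structures generated by a negative subcategory (see \cite{bws}, and \cite{bonspkar} for the treatment of idempotent completions): since $\shtop_{fin}$ is Karoubian, $\bu_{fin}$ generates a bounded weight structure $w_{fin}$ on $\shtop_{fin}$ whose heart is the Karoubi-closure of $\bu_{fin}$ (the finitely generated free abelian groups inside $\abfr\cong\hw^{sph}$). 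Concretely, $(w_{fin})_{\le 0}$ (resp.\ $(w_{fin})_{\ge 0}$) is the extension- and retract-closure in $\shtop_{fin}$ of $\{S^0[i]:i\le 0\}$ (resp.\ of $\{S^0[i]:i\ge 0\}$), and weight decompositions inside $\shtop_{fin}$ exist by the weight Postnikov tower construction of that theorem (equivalently, by the skeletal filtration of a finite CW spectrum, whose wedge-of-spheres subquotients lie in $\shtop_{\wsp=k}$ by part \ref{it2}).

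The key remaining step is to check that the embedding $\iota:\shtop_{fin}\hookrightarrow\shtop$ is weight-exact with respect to $w_{fin}$ and $\wsp$; this is the easy half of the class comparison. Since $S^0\in\shtop_{\wsp=0}$, semi-invariance gives $S^0[i]\in\shtop_{\wsp\le 0}$ for all $i\le 0$ and $S^0[i]\in\shtop_{\wsp\ge 0}$ for all $i\ge 0$; as $\shtop_{\wsp\le 0}$ and $\shtop_{\wsp\ge 0}$ are closed under extensions and under retracts, I would conclude $(w_{fin})_{\le 0}\subseteq\shtop_{\wsp\le 0}$ and $(w_{fin})_{\ge 0}\subseteq\shtop_{\wsp\ge 0}$, whence also $(w_{fin})_{\ge 1}\subseteq\shtop_{\wsp\ge 1}$. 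Consequently, for any $M\in\obj\shtop_{fin}$ a $w_{fin}$-weight decomposition $X\to M\to Y$ is simultaneously a $\wsp$-weight decomposition whose terms $X,Y$ are finite; this furnishes axiom (iv) and proves that $\wsp$ restricts. By the uniqueness of the weight structure generated by $\bu_{fin}$, the restricted structure then coincides with $w_{fin}$, which in passing identifies $\shtop_{fin}\cap\shtop_{\wsp\le 0}$ and $\shtop_{fin}\cap\shtop_{\wsp\ge 0}$ with the extension-retract closures above.

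The main obstacle I anticipate is not any single computation but the structural point just highlighted: guaranteeing that weight decompositions of a finite spectrum can be chosen finite. Everything hinges on having an independent, internal construction of these decompositions, namely the generated weight structure $w_{fin}$ (equivalently the cellular filtration of a finite spectrum). Once such finite decompositions are in hand, their membership in the big classes $\shtop_{\wsp\le 0}$ and $\shtop_{\wsp\ge 1}$ reduces to the extension- and retract-closedness recorded above, and no delicate homotopy-theoretic input is required beyond the finiteness of the cell structure (which also yields the boundedness of the restricted weight structure).
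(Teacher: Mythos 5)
Your argument for part \ref{it11} is correct, but note that the paper itself gives no argument for this proposition at all: its ``proof'' is a citation of \S4.6 of \cite{bws}, supplemented by Remark \ref{rmbws}, which only repairs the published proof of part \ref{it4} and (in Remark \ref{rmbws}(\ref{imb3})) justifies the identification of cellular towers with weight Postnikov towers in the big category $\shtop$. So what you have written is a reconstruction of the argument hidden behind that citation rather than a variant of a proof actually displayed in the paper. The route you take is the expected one and the key checks are sound: axioms (i)--(iii) of Definition \ref{dwstr}(I) pass to the full subcategory $\shtop_{fin}$ by intersecting classes; the only genuine content is the existence of weight decompositions with \emph{finite} terms, which you obtain from the bounded weight structure generated by the negative subcategory $\bu_{fin}$ of finite wedges of $S^0$ (negativity being $\pi_i(S^0)=0$ for $i<0$), legitimately applied since $\shtop_{fin}$ is Karoubian; and weight-exactness of the inclusion follows because $\shtop_{\wsp\le 0}$ and $\shtop_{\wsp\ge 0}$ are extension-closed and retract-closed (Proposition \ref{pbw}(\ref{iort}) together with axiom (i)). Your cellular-filtration aside is precisely the finite analogue of the mechanism that Remark \ref{rmbws}(\ref{imb3}) establishes for arbitrary spectra, so your proof meshes well with the rest of the section. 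One point you should make explicit rather than assert: the generation theorem needs $\bu_{fin}$ to densely generate $\shtop_{fin}$ and the latter to be Karoubian. Both hold because the compact objects of $\shtop$ are exactly the finite spectra (so $\shtop_{fin}$ is thick, hence Karoubian), and by Thomason's classification \cite{thom} the subcategory $\lan S^0\ra_{\shtop}$ is already all of $\shtop_{fin}$, since $[S^0]$ generates $K_0(\shtop_{fin})\cong\z$; with that inserted, your argument is complete and makes the paper self-contained on this point.
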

\begin{proof}
These statements were proved in \S4.6 of \cite{bws} (yet see 
 the remark below).
\end{proof}

\begin{rema}\label{rmbws}
\begin{enumerate}
\item\label{imb1}
The proof of our assertion \ref{it4} in (the published version of) \cite{bws} contains a substantial gap: the spectral sequence argument used there only works if $M$ is bounded below (by Theorem 2.3.2(II(iii)) of ibid.). Note still that any $M\in \obj \shtop$ can be presented as the {\it countable homotopy colimit} of its $t_{Post}$-truncations (from above; cf.  the proof of Theorem 4.5.2(I.2) of ibid.). Since  both the left and the right hand side of the assertion yield homological functors from $SH$ into $\ab$ 
that commute with all small coproducts (the latter property is easy and well-known for $H\z$-homology, and for  $H_i(t(M))$ it follows from Proposition 2.5.1(6) of \cite{bpure}); they also respect (countable) homotopy colimits and we obtain the result.

\item\label{imb2}
Note that any object $N$ in $K_w(\abfr)\cong D(\ab)$ functorially splits as the coproduct 
of $H_i(N)[-i]$ (for $i\in \z$). Hence we can also present $N$ (in $K(\abfr)$) as 
\begin{equation}\label{epres}
\coprod_{i\in \z}(A^{i-1}\stackrel {f^i}\to B^i)[-i],\end{equation}
 where $f^i$ are certain embeddings of free abelian groups (and we put $A^j$ and $B^j$ in degree $j$ for $j\in \z$).  
Furthermore, the homology of $t(M)$  is functorially isomorphic to  $\hsing_*(M)$ (for $M\in \obj \shtop$, by Proposition \ref{pshtopo}(\ref{it4})).
Moreover, for any $g\in \shtop(M,N)$ we have  a well-defined class of $t(g)$ in 
$\ab(\hsing_i(M),\hsing_{i-1}(N))$ (for $M,N\in \obj \shtop$ and any $i\in \z$).

\item\label{imb3}
 For $X\in \obj \shtop$ and all $n\in \z$ we take $X^{(n)}$ being (any choice of) $w^{sph}_{\le n}X$. We connect $X^{(n)}\to X^{(n+1)}$ by the unique morphisms $i^n$ "compatible with $\id_X$" (see Remark \ref{rcompl}(1)); then $\co(i^n)$ is a coproduct of $S^0[n+1]$ (see Proposition \ref{pbw}(\ref{iwd0})), i.e., of  $n+1$-dimensional spheres in $\shtop$. Next, one can easily check that $X$ is the {\it minimal weak colimit} of $X^{(n)}$ (see Proposition 3.3 of \cite{marg}). Furthermore,  part  \ref{it4} of our proposition implies that the inverse limit of the $H\z$-homology of $X^{(n)}$ (with respect to $i^n$) vanishes. Hence $X^{(n)}$ give a {\it cellular tower} for $X$ (in the sense of the beginning of  \S6.3 of \cite{marg}). Conversely, if $X^{(n)}$ is (a term of) a certain cellular tower for $X$ (i.e., an {\it $n$-skeleton} of $X$ in the terms of loc. cit.) then
$X^{(n)}\in \shtop_{w^{sph}_{\le n}}$ (by Proposition \ref{pshtopn}(\ref{it7}) below) and $\co(X^{(n)}\to X)\in \shtop_{w^{sph}_{\ge n+1}}$ (since this cone is the minimal weak colimit of $\co(X^{(n)}\to X^{(l)})\in \shtop_{w^{sph}_{\ge n+1}}$ for $l\ge n$). Hence $\shtop_{\wsp\le n}$ consists exactly of {\it$n$-skeleta} (of certain spectra; cf. also Definition 6.7 of \cite{christ}) and  all possible cellular towers of $X$ come  from some choices of $w^{sph}_{\le n}X$.
 This statement was made in \S4.6 of \cite{bws}; thus we have justified it completely in the current paper. As a consequence we obtain that the  $\wsp$-{\it weight spectral sequences} (for (co)homological functors defined on $\shtop$; see \S2.3--2.4 of ibid.)  
 are actually Atiyah-Hirzebruch ones  (and we can compute them using arbitrary cellular towers; we certainly also have a similar statement for $\wsp$-filtrations).

\item\label{imb4}
Since the category $\bu$ (as well as its strong generator $S^0$)  Hom-generates $\shtop$, part \ref{it1} of the proposition yields that $\shtop$ contains no non-trivial  left degenerate  objects (i.e., that $\cap_{i\in \z}\shtop_{\wsp\ge i}=\ns$). Now we will use this observation to obtain a (less trivial) description of all degenerate objects in $\shtop$.
\end{enumerate}
\end{rema}

We use the results of the current paper to prove some new 
properties $\shtop$ (none of them were formulated in \cite{bws}; yet assertion \ref{it7} of our proposition is equivalent to Proposition 6.8 of \cite{christ} by Remark \ref{rmbws}(\ref{imb3})). 

\begin{pr}\label{pshtopn}

Let $M,N\in \obj \shtop$, $g\in \shtop(M,N)$, $m\le n\in \z$.
Then the following statements are valid.

 \begin{enumerate}
\item\label{it8} $g$ kills $w^{sph}$-weight $n$ if and only if $\hsing_{-n}(g)=0$ and 
the class of $g$ in $\ab(\hsing_{-n}(M),\hsing_{-n-1}(N))$ (see Remark \ref{rmbws}(\ref{imb2})) vanishes.

\item\label{it10} $g\in \mo_{\cancel{[m,n]}}\shtop$ 
 if and only if $H(g)=0$ for any $H$ representable by an element of $\shtop[m,n]$. Moreover, if $g$ is an $\shtop_{fin}$-morphism then it suffices to consider elements of $\shtop[m,n]$ with finitely generated $H\z$-homology here only.

\item\label{it9} $M$ is without weights $m,\dots,n$ if and only if $\hsing_i(M)=0$ for $-n\le i\le -m$ and  $\hsing_{-m+1}(M)$ is a free abelian group.

\item\label{it12} For any $M\in \obj \shtop$ there exists a distinguished triangle  $ P\stackrel{g}{\to} M\stackrel{h}{\to} I_0$ such that  $I_0\in SH[m,n]$ and $P$ is without weights  $m,\dots,n$.
Moreover,  an $\shtop$-morphism $j$  whose target is $M$ kills  weights $m,\dots, n$ if and only if it factors through $g$ (from this triangle); any morphism from $M$ into an element of  $\shtop[m,n]$ factors through (this) $h$.

\item\label{it6} The class of weight-degenerate objects of $\shtop$ is the one of  {\it acyclic spectra} (i.e., of those with vanishing $H\z$-homology); it coincides with
the class of right degenerate  spectra.

\item\label{it7} $M\in \shtop_{\wsp\le n}$ if and only if  $\hsing_i(M)=0$ for all $i\le -n$ and $\hsing_{-n+1}(M)$ is a free abelian group.

\item\label{it0}
$\hsing_i(M)=0$ for all $i> -n$ if and only if  $M$ is an extension of an object of  $ \shtop_{\wsp\ge n}=\shtop^{t_{Post}\ge -n}$ by an acyclic spectrum. Moreover,
the presentation of $M$ as an extension of this form is $\shtop$-functorial in $M$. 

 Furthermore, these two assumptions on $M$ are equivalent  to the vanishing of $H^i(M,H(\q/\z))$ for all $i<n$.


\end{enumerate}
\end{pr}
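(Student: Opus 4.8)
The plan is to transport the general results of \S\ref{snews} across the exact weight complex functor $t\colon\shtop\to K(\abfr)\cong D(\ab)$ (Proposition \ref{pshtopo}(\ref{it3})), using the dictionaries $H_i(t(M))\cong\hsing_i(M)$ (Proposition \ref{pshtopo}(\ref{it4})) and $H^i(M,HG)\cong H^i(\ab(t(M)^{-*},G))$ (Proposition \ref{pshtopo}(\ref{it5})), together with the canonical splitting $t(M)\cong\coprod_i\hsing_i(M)[-i]$ of objects of $D(\ab)$ (Remark \ref{rmbws}(\ref{imb2})). The recurring technical engine will be the structure theory of bounded-below complexes over the hereditary ring $\z$: up to homotopy every complex of free abelian groups splits as a sum of contractible ``disks'' $\z\xrightarrow{=}\z$, free ``spheres'' $\z$ in a single degree, and ``torsion pieces'' $\z\xrightarrow{\times c}\z$ (with $c\neq\pm1$) in two adjacent degrees. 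From this I read off the key lemma: for a complex $A$ of free abelian groups one has $\id_A\backsim_{[i,i]}0$ (in the sense of Remark \ref{rwc}(\ref{irwc3})) if and only if $H_i(A)=0$ and $H_{i+1}(A)$ is free, since a torsion piece in degrees $(i,i+1)$ is the only obstruction touching degree $i$ and it forces torsion into $H_{i+1}$.

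With this lemma parts (\ref{it8}), (\ref{it9}) and (\ref{it7}) become computations. For (\ref{it9}) I would combine Proposition \ref{pwckill}(\ref{iwckill3}) with Remark \ref{rwc}(\ref{irwc3}): $M$ is without weights $m,\dots,n$ iff $t(\id_M)\backsim_{[-n,-m]}0$ iff $\id_{t(M)}\backsim_{[i,i]}0$ for every $-n\le i\le -m$; applying the lemma on this range yields $\hsing_i(M)=0$ throughout together with freeness of $\hsing_{i+1}(M)$, the only genuinely new constraint being freeness of $\hsing_{-m+1}(M)$. Part (\ref{it7}) is the analogous membership statement for $\shtop_{\wsp\le n}$: its forward direction is Proposition \ref{pbwcomp}(\ref{iwc3}), while the converse follows from Theorem \ref{tdegen}(II) (using that $\shtop$ is Karoubian) once we note, via Remark \ref{rmbws}(\ref{imb4}), that $\shtop$ has no nonzero left degenerate objects, so ``$t(M)\backsim^w0$'' collapses to ``$M\in\shtop_{\wsp\le n}$'' and the free-complex lemma again produces the vanishing-plus-freeness description. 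For (\ref{it8}) the same analysis of a general $t(g)$ shows that $t(g)\backsim_{[-n,-n]}0$ is detected exactly by the pair consisting of $\hsing_{-n}(g)$ and the secondary class of Remark \ref{rmbws}(\ref{imb2}).

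Parts (\ref{it10}) and (\ref{it12}) I would deduce directly from Remark \ref{rkwmot}(\ref{ichrist},\ref{ihomorth}): since the Postnikov $t$-structure is adjacent to $\wsp$, the pair $(\shtop[m,n],\mo_{\cancel{[m,n]}}\shtop)$ is an injective class and a complete Hom-orthogonal pair, which is precisely the representability characterization of (\ref{it10}) and the weakly universal decomposition $P\xrightarrow{g}M\xrightarrow{h}I_0$ of (\ref{it12}), the factorization properties coming from the Hom-orthogonality. The one genuinely extra point is the refinement of (\ref{it10}) to finitely generated $H\z$-homology for an $\shtop_{fin}$-morphism $g$: here I would use compactness of $M$ and $N$ to write an arbitrary detecting $I\in\shtop[m,n]$ as a filtered colimit of objects with finitely generated homology and push the vanishing $H_I(g)=0$ through. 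This colimit/compactness bookkeeping is the main obstacle in this block.

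Finally (\ref{it6}) and (\ref{it0}) rest on Theorem \ref{tdegen}. Since $\shtop$ has no nonzero left degenerate objects, Theorem \ref{tdegen}(III) identifies weight-degenerate with right degenerate objects, and $t(M)\backsim0$ means $t(M)$ is contractible, i.e.\ $\hsing_*(M)=0$, giving (\ref{it6}). For the first equivalence in (\ref{it0}) I would apply Theorem \ref{tdegen}(II) to an appropriate shift of $M$, so that $\cu_{w\ge0}$ becomes $\shtop_{\wsp\ge n}$ and the right degenerate object becomes an acyclic one via (\ref{it6}), reading ``$\hsing_i(M)=0$ for $i>-n$'' as the free-complex condition $t(M)\backsim_w0$ (which, unlike $\backsim^w0$, carries no freeness constraint). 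The last equivalence, through $H^i(M,H(\q/\z))$, is where the real (if short) work lies: by Proposition \ref{pshtopo}(\ref{it5}) this group is the cohomology of $\ab(t(M)^{-*},\q/\z)$, and since $\q/\z$ is an injective cogenerator the functor $\ab(-,\q/\z)$ is exact and detects vanishing, so $H^i(M,H(\q/\z))\cong\ab(\hsing_{j}(M),\q/\z)$ for the index $j$ dual to $i$ and hence vanishes in the stated range precisely when the corresponding $\hsing$-groups do. I expect the index bookkeeping (the ``weird numbering'' warned about in \S\ref{snotata}) together with this universal-coefficient identification to be the two points demanding the most care.
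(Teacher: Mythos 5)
Your overall route is the paper's own, almost clause for clause: parts (\ref{it9}), (\ref{it7}) and (\ref{it8}) are reduced, exactly as in the paper, to the weight complex criteria of Proposition \ref{pwckill}(\ref{iwckill1},\ref{iwckill3}) plus the splitting of Remark \ref{rmbws}(\ref{imb2}); parts (\ref{it10}) and (\ref{it12}) are quoted from Theorem \ref{tkwhom}(I) and Remark \ref{rkwmot}(\ref{ichrist},\ref{ihomorth}); parts (\ref{it6}) and (\ref{it0}) from Theorem \ref{tdegen} together with the absence of non-zero left degenerate spectra, and the $\q/\z$-statement from injectivity of $\q/\z$ plus Proposition \ref{pshtopo}(\ref{it5}). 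The one place you genuinely diverge is the finitely generated refinement in (\ref{it10}): you approximate an arbitrary $I\in\shtop[m,n]$ by truncations of finite spectra and push $H_I(g)=0$ through using compactness of the source and target of $g$; this is workable, but the paper argues more directly by choosing $w_{\ge m}N$ itself finite (possible for finite $N$) and taking the single detecting object $t^{\ge -n}(w_{\ge m}N)$, whose $\hsing$-homology is finitely generated, thereby avoiding all homotopy colimit bookkeeping.

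There are, however, two genuine gaps. First, your ``technical engine'' is false as stated: a complex of free abelian groups does \emph{not} in general split, even up to homotopy, into disks, spheres and pieces $\z\stackrel{\times c}{\to}\z$ --- that would force every homology group to be a direct sum of cyclic groups, whereas spectra with $\hsing$-homology $\q$, $\z[1/2]$ or $\q/\z$ certainly exist. What is true (and is all that is needed) is the coarser splitting of Remark \ref{rmbws}(\ref{imb2}) that you also cite: one two-term summand $A\hookrightarrow B$ (a free presentation) per homology group. Your key lemma ($\id_A\backsim_{[i,i]}0$ if and only if $H_i(A)=0$ and $H_{i+1}(A)$ is free) is correct and follows from this coarser splitting by a direct computation of homotopies on a two-term complex; but note the obstruction is \emph{non-freeness} of $H_{i+1}$, not torsion (take $H_{i+1}=\z[1/2]$). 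Second, and more seriously, part (\ref{it8}) does not follow from the identity-map lemma ``by the same analysis''. Relative to degree $-n$, a general $t(g)$ has \emph{three} relevant pieces: the homology map $\hsing_{-n}(g)$, the secondary class with source $\hsing_{-n+1}(M)$ and target $\hsing_{-n}(N)$, and the diagonal piece $\hsing_{-n+1}(g)$, which contributes the extra requirement that $\hsing_{-n+1}(g)$ lift through a free cover of $\hsing_{-n+1}(N)$ (equivalently, factor through a free abelian group). For $g=\id_M$ this third condition is exactly the freeness statement in your lemma, but for general $g$ it is independent of the other two: for $M=N=S^0/2$ (the mod~$2$ Moore spectrum), $g=\id$ and $n=1$, both of your two conditions hold vacuously (the groups $\hsing_{-1}$ and $\hsing_{-2}$ vanish), yet $\id$ does not kill weight $1$ --- otherwise $S^0/2$ would be a retract of an object of $\shtop_{\wsp\le 0}$ and $\hsing_0(S^0/2)=\z/2$ would be free, contradicting part (\ref{it9}). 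So your two-condition criterion must be supplemented by this lifting condition, and the index bookkeeping you yourself flag as delicate does need to be carried out (a similar test with $M=S^0$ and $n=0$ shows that the displayed indices in part (\ref{it7}) also require a shift).
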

\begin{proof}
\begin{enumerate}
\item By Proposition \ref{pwckill}(\ref{iwckill1}), we should check whether $t(g)\backsim_{[-n,-n]} 0$. Thus the assertion is an easy consequence of  Remark \ref{rmbws}(\ref{imb2}).

\item The first part of the assertion is immediate from Theorem \ref{tkwhom}(I) (see also Remark \ref{rkwmot}(\ref{irkwmot1})).
We obtain the second part by noting that any finite spectrum possesses an ($m$-)$\wsp$-decomposition whose components are finite, whereas the $\hsing$-homology groups of finite spectra are finitely generated.

\item According to Proposition \ref{pwckill}(\ref{iwckill3}), we should check whether $t(\id_M)\backsim_{[-n,-m]}0$. For this purpose is suffices to apply Remark \ref{rmbws}(\ref{imb2}) again.

\item See Proposition \ref{ptp}(1) and Remark \ref{rtp}(1) below.

\item Since $\shtop$ is Karoubian and contains no non-trivial  left degenerate objects (see Remark \ref{rmbws}(\ref{imb4})), all of its weight-degenerate objects are right degenerate  (by Theorem  \ref{tdegen}(III)).  Next, since $K_w(\hw^{sph})\cong D(\ab)$, weight-degenerate spectra are exactly the acyclic ones (see  Proposition \ref{pshtopo}(\ref{it4})).

\item The proof is similar to that of assertion \ref{it9}; one should  apply  Theorem  \ref{tdegen}(II) (instead of  Proposition \ref{pwckill}(\ref{iwckill3})) and recall once again that $\shtop$ contains no non-trivial  left degenerate objects.

\item Once again, to prove the first part of the assertion we should combine Theorem  \ref{tdegen}(II) with  Remark \ref{rmbws}(\ref{imb2}). To prove the functoriality statement in question one should apply Proposition 1.7 of \cite{wild} (see Remark \ref{rwild}(\ref{irwild1}).
).

Lastly, to prove the 
"furthermore" part of the assertion it suffices to note that $\q/\z$ is an injective cogenerator of $\ab$ and apply Proposition \ref{pshtopo}(\ref{it5}).
\end{enumerate}
\end{proof}

\begin{rema}\label{rdwss}

\begin{enumerate}
\item\label{icsh} Non-zero acyclic objects do exist; see Theorem 16.17 of \cite{marg}. 

Note also that part \ref{it0} of our theorem gives a "projection" of the category of spectra satisfying the condition $\hsing_i(M)=0$ for all $i> -n$ (this is certainly the same thing as the vanishing of singular homology in degrees $<n$ if one uses the "standard" numeration convention) onto that of $n-1$-connective spectra; this projection functor does not change singular homology. Thus our result may be said to be converse to the stable Hurewicz theorem.

\item So, we obtain a "reasonable cohomological description" of 
$\mo_{\cancel{[m,n]}}\shtop$.  Moreover, if certain composable morphisms $g_n$ satisfy the (equivalent) conditions of part \ref{it8} of our proposition
for all $l\ge n\ge m$ (where $l\ge m\in \z$) 
 then  the morphism $h=g_{l}\circ g_{l-1}\circ\dots \circ g_m$ kills $\wsp$-weights  $m,\dots, l$. Next one can 
consider the corresponding version of (\ref{eccompl}) for any choices of 
cellular filtrations for the source and the target of $h$ (see
Remark \ref{rmbws}(\ref{imb3})).


\item As it often happens with (author's) results related to weight structures, we did not use the ("full") definition of $\shtop$ for proving its properties described above.
It suffices to have a category $\bu$ as in Remark \ref{rtst}(4) such the corresponding category $\hrt\cong \adfu(\bu^{op},\ab)$ is of projective dimension $1$ (in particular, this is the case if $\bu$ consists of finite coproducts of a single object $S$ and the ring $\cu(S,S)$ is  hereditary). 
Then we have $K_w(\hw)\cong D(\hrt)$ (see Remark 3.3.4 of \cite{bws}), and one can easily prove the natural analogues of all the results of this section (though  the corresponding "homology" and "cohomology" of weight complexes does not have any "topological" significance in general). 

\item In particular, inside the category $\dm$ of  motives over any perfect field 
(see \S4.2 of \cite{degmod})  one can take for $\cu$ its {\it localizing subcategory $DTM$  generated by} the Tate motives $\z(i)$ for $i\in\z$ (i.e., by its smallest triangulated subcategory containing all $\z(i)$ and closed with respect to small coproducts). Note also that this category possesses two important weight structures: the heart of the first (''Chow") one is generated by $\z(i)[2i]$ (i.e. consists of retracts of coproducts of families of $\z(i)[2i]$; actually, coproducts are not necessary here)
and this weight structure is "compatible" with Chow weight structure for the whole $\dm$, whereas  the second ("Gersten") heart is 
  generated by $\z(i)[i]$. 

One can also consider Artin-Tate motives here (cf. \cite{wildat}); instead of  motives with integral coefficients one can take $R$-linear ones  for $R$ being any Dedekind domain 
 (cf. \cite{vbook}).

\end{enumerate}
\end{rema}


\subsection{On the relation to torsion pairs and injective classes}\label{stp}

We recall some definitions from \cite{aiya}
 and \cite{christ}.

\begin{defi}\label{dhop}

1. A couple $s$ of classes $\lo,\ro\subset\obj \cu$ 
will be said to be a {\it torsion pair} (for $\cu$) if $\lo^{\perp}=\ro$,  $\lo={}^{\perp}\ro$, and 
for any $M\in\obj \cu$ there
exists a distinguished triangle
\begin{equation}\label{swd}
L_sM\stackrel{a_M}{\to} M\stackrel{n_M}{\to} R_sM
{\to} L_sM[1]\end{equation} 
such that $L_sM\in \lo $ and $ R_sM\in \ro$. We will call any triangle of this form an {\it $s$-decomposition} of $M$. 

2. We will say that a couple $(\io,\jo)$ for  $\io \subset \obj \cu$ and $\jo\subset \mo(\cu)$  is a {\it injective class} whenever the following conditions are fulfilled:

(i) For a $\cu$-morphism $g$ we have $g\in \jo$ if and only if $H_I(g)=0$ for all 
  $I\in \io$.
	
	(ii) If $M\in \obj\cu$  then the functor $H_M=\cu(-,M)$ annihilates all elements of $\jo$ if and only if $M\in \io$.

(iii) For any $M\in\obj \cu$ there
exists a distinguished triangle
\begin{equation}\label{iwd}
 JM\stackrel{j_M}{\to} M
\to IM\to 
 JM[1]\end{equation} 
such that $j_M\in \jo$ and $IM\in \io$.

\end{defi}

	\begin{rema}\label{ricl}
	1. The author has taken the term "torsion pair" from Definition 1.4 of \cite{aiya}, and used it in \cite{bpure}. Note however  in \cite[Definition 3.2]{postov} torsion pairs were called complete Hom-orthogonal pairs; the latter term appears to be somewhat less ambiguous. 
	
2. Proposition 2.6 of \cite{christ} easily implies that injective classes are categorically dual to {\it projective classes} in triangulated categories (as defined in \S2.3 of ibid.).	
	
\end{rema}

Now we relate the main notions of this paper to the ones that we have just  recalled. 

\begin{pr}\label{ptp}
Assume that $w$ is endowed with a weight structure $w$ and an adjacent $t$-structure $t$.

1. Then the couple $s=(\cu_{w\notin{[m,n]}},\cu[m,n])$ is a torsion pair.

2.  The couple $(\cu[m,n],\mo_{\cancel{[m,n]}}\cu)$ is an  injective class.


\end{pr}
\begin{proof}
1. According to Proposition 1.2.4(9) of \cite{bpure} it suffices to verify that the classes $\cu_{w\notin{[m,n]}} $ and $\cu[m,n]$ are Karoubi-closed in $\cu$, 
$\cu_{w\notin{[m,n]}} \perp \cu[m,n])$, and for any $M\in \obj \cu$ there exists an $s$-decomposition (\ref{swd}).

The class $\cu[m,n]$ is Karoubi-closed in $\cu$ since it equals the intersection of the $\cu$-Karoubi closed classes $\cu^{t\ge -n}$ and $\cu^{t\le -m}$. Moreover, $\cu_{w\notin{[m,n]}} $  is Karoubi-closed in $\cu$  according to Remark \ref{rwild}(\ref{irwild2}).

Next, $\cu_{w\notin{[m,n]}} \perp \cu[m,n]$ according to Remark \ref{rkwmot}(\ref{irkwmot1}).

It remains to verify the existence of an $s$-decomposition for an arbitrary $M\in \obj \cu$.

We fix some $w_{\ge m}M$, denote   $ t^{\ge -n}(w_{\ge m}M)$ by $RM$, and 
 complete the corresponding composed morphism $h\in \cu(M,RM)$ to a triangle $LM\to M\to RM\to LM[1]$. Then $LM$ is an extension of $t^{\le -n-1}(w_{\ge m}M)$ by $w_{\le m-1}M$ (by the octahedron axiom of triangulated categories). Since $t^{\le -n-1}(w_{\ge m}M)\in \cu_{w\ge n+1}$  (by the definition of adjacent structures), $LM$ is without weights $m,\dots, n$ (by Theorem \ref{tprkw}(\ref{iwildef1})). Lastly, since  $w_{\ge m}M\in \cu^{t\le -m}$, we have $RM\in \cu[m,n]$. 

2. Since injective classes are categorically dual to projective classes (see Remark \ref{ricl}(2)), by Proposition 1.2.4(8) of \cite{bpure} (that we apply in the dual form)  it suffices to verify condition (i) in Definition \ref{dhop}(2) for the couple $(\cu[m,n],\mo_{\cancel{[m,n]}}\cu)$. The latter follows from Theorem \ref{tkwhom}(I) 
 according to Remark \ref{rkwmot}(\ref{irkwmot1}).

\end{proof}

\begin{rema}\label{rtp}
1. Several nice properties of torsion pairs were proved in \S1.2 of ibid.; so we can apply them to our $s$.
In particular, 
Proposition 1.2.4(7) of ibid. says that for any torsion pair $(\lo,\ro)$ the class $\lo$ can be completed to a projective class (see Remark \ref{ricl}(2)), and gives a description of the corresponding morphism class. Hence  $\cu_{w\notin{[m,n]}}$ can be completed to 
 a projective class of morphisms; one may say that this class is "complementary" to  $(\cu[m,n],\mo_{\cancel{[m,n]}}\cu)$. This projective class can also be described as the {\it product} of the projective class corresponding to $\cu^{t\le -n-1}=\cu_{w\ge n+1}$ with the one corresponding to  $\cu_{w\le m-1}$ (see Proposition 3.3 of \cite{christ}).

\cite[Proposition 1.2.4(7)]{bpure} also implies that for any $s$-decomposition (\ref{swd})  any morphism into $M$ from an element of  $\cu_{w\notin{[m,n]}} $  factors through  the morphism $a_M$. Moreover, applying loc. cit. to the opposite category we obtain that any morphism from $M$ into an element of  $\cu[m,n]$ factors through $n_M$.

Furthermore, one can easily prove that a $\cu$-morphism $j$  whose target is $M$ kills  weights $m,\dots, n$ if and only if it factors through $a_M$.

 Possibly the author will study the relation  of (certain)  torsion pairs to injective classes of morphisms (including  $\mo_{\cancel{[m,n]}}\cu$) further in future. 

2. The results of ibid. yield rather vast families of examples of adjacent $w$ and $t$. One of them is provided by  Proposition 3.1.7(II) of ibid.; it implies (see Remark 3.1.8(1) of ibid.) that for any bounded weight structure $w$ on the bounded derived category $D^b(X)$ of coherent sheaves on a smooth proper variety $X$ over a field there exists an adjacent $t$. 

Now we recall some results for the case where $\cu$ is closed with respect to small coproducts. If $\cu$ also satisfies the {\it Brown representability} condition\footnote{I.e., all homological functors $\cu^{op}\to \ab$ that respect products are $\cu$-representable. Recall that this is a rather "reasonable" assumption; in particular (as proved in \cite{neebook}) it is fulfilled whenever $\cu$ is {\it perfectly generated} or $\cu^{op}$ is compactly generated. } then to any {\it smashing} $w$ on $\cu$ (i.e., we assume that $\cu_{w\ge 0}$ is closed with respect to $\cu$-coproducts) there exists an adjacent $t$ according to Theorem 3.1.2 of ibid.
Recall also that any {\it perfect} set $\cp$ of objects of $\cu$ {\it generates} a smashing weight structure; see Theorem 4.3.1, Definition 3.3.1(2), and Remark 2.1.5(1) of ibid. In particular, one can take $\cp$ being any set of compact objects. 


3. Another related notion is the natural $t$-structure analogue of killing weights; so, for $t$ being a $t$-structure for $\cu$ and $g\in\cu(M,N)$ one may ask whether the corresponding composed morphism $t^{\le n}M\to N$ factors through $t^{\le m-1}M$ (for $m\le n\in \z$). 
Note that this setting  is closely related to ghost morphisms as studied in \S7 of \cite{christ}.


4. Assume that $\cu$ is closed with respect to small products.\footnote{Actually, it appears that it suffices to assume the existence of countable products only; yet this distinction does not seem to be important since any $\cu$ satisfying the Brown representability condition is also closed with respect to products according to Proposition 8.4.6  of \cite{neebook}.} Combining Proposition 3.1 of \cite{christ} with part 2 of our proposition one obtains that for any $n\ge 0$ the couple $(\io_n, \cap_{m\in \z} \mo_{\cancel{[m,m+n]}}\cu)$ is an injective class, where $\io_n$ is the set of retracts of all products of elements of the class $\cup_{m\in \z}\cu[m,m+n]$.

\end{rema}

\section{On  generalizations to non-Karoubian categories and applications} 
\label{ssupl}

This section is mainly dedicated to the extension of our main results to non-Karoubian triangulated categories and to their applications.

In \S\ref{spkar} we discuss certain extensions of the  results of 
 \S\ref{snews} to the case where $\cu$ is not (necessarily) Karoubian (they are mostly "generalizations up to retracts"). 

In \S\ref{snkarex} we construct certain counterexamples to demonstrate that the modifications made in \S\ref{spkar} to "adjust" the results of \S\ref{snews}
to the non-Karoubian case 
cannot be avoided. 

\subsection{On weight-Karoubian extensions and generalizations of our results to non-Karoubian categories}\label{spkar}

We recall the central definitions of \cite{bonspkar}.

\begin{defi}\label{dpkar}
1. We will call a triangulated category $\cu'$ an {\it  idempotent extension} of $\cu$ if  it contains $\cu$ and there exists a fully faithful exact functor  $\cu'\to \kar(\cu)$\footnote{Recall that (according to Theorem 1.5 of \cite{bashli}) the category  $\kar(\cu)$ can be naturally endowed with the structure of a triangulated category so that the  natural embedding functor $\cu\to \kar(\cu)$ is exact. Hence
$\cu'$ is an idempotent extension of $\cu$ if  and only if any object of $\cu'$ is a retract of some object of $\cu$ and $\cu$ is dense (see \S\ref{snotata})  in $\cu'$.}.

2. We will say that a weight structure $w$  {\it extends} onto an  idempotent extension $\cu'$ of $\cu$ 
 whenever there  exists a weight structure 
 $w'$ for $\cu'$ such that the embedding $\cu\to \cu'$ is weight-exact.  In this case we will call $w'$ an {\it extension} of $w$. 

3. We will say that a triangulated category $\cu'$ endowed with a weight structure $w'$ is {\it weight-Karoubian} if $\hw'$ is Karoubian. 

4. We will call  a weight-Karoubian 
category $(\cu',w')$ a   {\it weight-Karoubian extension} of $(\cu,w)$ if $\cu'$ is an  idempotent extension of $\cu$ and $w'$ is the extension of $w$ onto it.
\end{defi}

Now we recall those results of ibid. that are relevant for the current paper.

\begin{pr}\label{ppkar}
1. Let $\cu'$ be an  idempotent extension of $\cu$ such that  $w$ for extends to a weight structure $w'$ on it. Then  $\cu'_{w\ge 0}$ (resp.  $\cu'_{w'\le 0}$, resp.  $\cu'_{w'= 0}$)  is the Karoubi-closure of  $\cu_{w\ge 0}$ (resp.  $\cu_{w\le 0}$, resp.  $\cu_{w= 0}$) in $\cu'$.

2. Any $(\cu,w)$ possesses a weight-Karoubian extension.

\end{pr}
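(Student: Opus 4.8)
The plan is to prove the two assertions separately, deriving the heart computation in part 2 from part 1 once the extension has been built.

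For part 1 I would first record the cheap inclusion: weight-exactness of $\cu\to\cu'$ gives $\cu_{w\ge 0}\subseteq\cu'_{w'\ge 0}$, and $\cu'_{w'\ge 0}$ is Karoubi-closed by Definition \ref{dwstr}(I.i), so it contains $\kar_{\cu'}(\cu_{w\ge 0})$. The substance is the reverse inclusion. Given $M\in\cu'_{w'\ge 0}$, since $\cu'$ is an idempotent extension of $\cu$ I write $M$ as a retract of some $N\in\obj\cu$ via $i\in\cu'(M,N)$, $p\in\cu'(N,M)$ with $p\circ i=\id_M$. Choosing a weight decomposition $X\xrightarrow{x}N\xrightarrow{y}Y\to X[1]$ of $N$ inside $\cu$ (with $X\in\cu_{w\le -1}$, $Y\in\cu_{w\ge 0}$), orthogonality for $w'$ yields $\cu'(X,M)=\ns$ (as $X\in\cu'_{w'\le -1}$ and $M\in\cu'_{w'\ge 0}$); hence $p\circ x=0$, so $p$ factors as $p=p'\circ y$. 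Then $\id_M=p'\circ(y\circ i)$ factors through $Y\in\cu_{w\ge 0}$, exhibiting $M$ as a retract of $Y$, i.e. $M\in\kar_{\cu'}(\cu_{w\ge 0})$. The statement for $\cu'_{w'\le 0}$ is exactly dual. For $\cu'_{w'=0}$ the inclusion $\kar_{\cu'}(\cu_{w=0})\subseteq\cu'_{w'=0}$ is immediate, and conversely any $M\in\cu'_{w'=0}$ is, by the two cases just proved, simultaneously a retract of some $Y\in\cu_{w\ge 0}$, say $M\xrightarrow{u}Y\xrightarrow{v}M$ with $v\circ u=\id_M$, and of some $X''\in\cu_{w\le 0}$, say $M\xrightarrow{s}X''\xrightarrow{r}M$ with $r\circ s=\id_M$. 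Applying Proposition \ref{pbw}(\ref{ifacth}) to $u\circ r\in\cu(X'',Y)$ factors it through some $Z\in\cu_{w=0}$; substituting into $\id_M=(v\circ u)\circ(r\circ s)=v\circ(u\circ r)\circ s$ exhibits $\id_M$ as factoring through $Z$, so $M\in\kar_{\cu'}(\cu_{w=0})$.

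For part 2 I would take $\cu'=\kar(\cu)$, which is triangulated with $\cu\to\kar(\cu)$ exact by Theorem 1.5 of \cite{bashli} and is tautologically an idempotent extension of $\cu$. Guided by part 1, I define $\cu'_{w'\le 0}=\kar_{\cu'}(\cu_{w\le 0})$ and $\cu'_{w'\ge 0}=\kar_{\cu'}(\cu_{w\ge 0})$. Karoubi-closedness holds by construction; semi-invariance under $[\pm 1]$ descends from $w$ because $[1]$ is an equivalence commuting with Karoubi-closure; and orthogonality $\cu'_{w'\le 0}\perp\cu'_{w'\ge 1}$ follows from $\cu_{w\le 0}\perp\cu_{w\ge 1}$ since the full embedding identifies $\cu'$-morphisms between objects of $\cu$ with $\cu$-morphisms, and vanishing of Hom-groups is inherited by retracts. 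Granting the weight-decomposition axiom, part 1 identifies the heart as $\cu'_{w'=0}=\kar_{\cu'}(\cu_{w=0})=\kar(\hw)$, the idempotent completion of $\hw$, which is idempotent complete; hence $(\cu',w')$ is weight-Karoubian.

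The main obstacle is precisely the existence of weight decompositions in $\kar(\cu)$, which is the one axiom that does not pass formally to retracts. For $M\in\obj\kar(\cu)$ realized as a retract of $N\in\obj\cu$ with associated idempotent $e=i\circ p\in\cu(N,N)$, I would lift $e$ to an endomorphism of a chosen weight decomposition of $N$ using Proposition \ref{pbw}(\ref{icompl}), and then split the resulting data in $\kar(\cu)$ so as to produce a triangle $X_M\to M\to Y_M$ with $X_M\in\cu'_{w'\le 0}$ and $Y_M\in\cu'_{w'\ge 1}$, in the spirit of Proposition \ref{pbw}(\ref{icompidemp}) (which, however, assumes $\cu$ Karoubian and treats only $g=\id$). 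The genuine difficulty, and the technical heart of \cite{bonspkar}, is that in the case $m=l$ the lifted endomorphism is not canonical and need not be idempotent on the nose, so it must first be corrected to an honest idempotent before it can be split; this is exactly where the non-Karoubian setting demands real work rather than the formal retract-manipulations that suffice for everything else.
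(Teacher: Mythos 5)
Your proof of part 1 is correct and complete: factoring the retraction $p$ through $Y$ via the orthogonality axiom (and dually for $\cu'_{w'\le 0}$), and then identifying the heart by applying Proposition \ref{pbw}(\ref{ifacth}) to the composite $X''\to M\to Y$, is exactly the right argument. Note that for this proposition the paper itself gives no argument at all — both parts are citations of Theorem 2.2.2 of \cite{bonspkar} — so for part 1 you have actually supplied more than the text does, along the same lines as the cited source.

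Part 2, however, contains a genuine gap, which you flag yourself: the existence of weight decompositions in your chosen $\cu'=\kar(\cu)$ is "granted", and that axiom is the entire content of the statement (Karoubi-closedness, semi-invariance and orthogonality are formal, as you correctly observe). The obstruction is not bookkeeping one may defer. When you lift an idempotent $e\in\en(N)$, $N\in \obj\cu$, to an endomorphism $(e_X,e,e_Y)$ of a weight decomposition $X\to N\to Y$ via Proposition \ref{pbw}(\ref{icompl}), the component $e_X$ is well defined only up to the image of $\cu(X,Y[-1])\to \cu(X,X)$; since $Y[-1]\in \cu_{w\ge 0}$, orthogonality gives no vanishing for this group, the corresponding ideal of $\en(X)$ has no nilpotence properties, and so no naive idempotent-correction procedure terminates. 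This is precisely why the construction you are implicitly trying to reproduce does not take $\cu'=\kar(\cu)$: as recorded in Remark \ref{rwkar} of the paper, the weight-Karoubian extension produced in \cite{bonspkar} is $\wkar(\cu)=\lan \obj \kar(\cu^-)\cup \obj \kar(\cu^+) \ra_{\kar(\cu)}$, in general a \emph{proper} subcategory of $\kar(\cu)$ (and the conditional phrasing of part 1 of the proposition — "such that $w$ extends" — signals that extendability to an arbitrary idempotent extension is not automatic). The point of Definition \ref{dpkar}(3) is that only the heart $\hw'$ must be Karoubian; since $\obj \hw$ consists of $w$-bounded objects, it suffices to adjoin splittings of idempotents acting on $w$-bounded above and $w$-bounded below objects, and it is for such objects (not for arbitrary ones) that the lifted endomorphisms can actually be corrected, using weight complexes and Schn\"urer-type results on retracts of bounded-above complexes (cf. Remark \ref{rwc}(\ref{irwc6}) and \cite{schnur}). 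So a repair of your part 2 should replace $\kar(\cu)$ by this smaller extension and prove the weight-decomposition axiom there; with $\cu'=\kar(\cu)$ the decisive axiom remains unproved, and it is not established by the tools you invoke.
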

\begin{proof}
1. This is Theorem 2.2.2(I.1) of ibid.

2. The statement is given part III.1 of loc. cit.
\end{proof}

The following observations is crucial for this section.

\begin{pr}\label{pwkar}
 The 
conclusions of Proposition \ref{pbw}(\ref{icompidemp}), Theorem \ref{tprkw}(\ref{iwildef2}), and Theorem  \ref{tdegen}(II)
remain valid if we assume $\cu$ to be weight-Karoubian (only).
\end{pr}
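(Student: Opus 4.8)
\emph{The plan} is to notice that in all three cited results the Karoubianness of $\cu$ is used at a single point: to split, by means of an idempotent $h$, a weight truncation $w_{\le m}M$ so that one summand lies in $\cu_{w=m}$ (this is Proposition \ref{pbw}(\ref{icompidemp})). Indeed, Theorem \ref{tprkw}(\ref{iwildef2}) produces a decomposition avoiding weights $m,\dots,n$ precisely by building an idempotent on $w_{\le n}O$ and feeding it into (a shift of) Proposition \ref{pbw}(\ref{icompidemp}), while Theorem \ref{tdegen}(II) invokes Karoubianness only through Remark \ref{rwildwc}, hence again through Theorem \ref{tprkw}(\ref{iwildef2}). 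Consequently it suffices to re-establish Proposition \ref{pbw}(\ref{icompidemp}) under the sole assumption that $\hw$ (equivalently every $\cu_{w=i}$) is Karoubian; the arguments for the other two statements then transcribe verbatim.

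So the entire task reduces to showing that $e:=\id_{w_{\le m}M}-h$ splits in $\cu$ with its image lying in $\cu_{w=m}$. First I would use the commutativity of the left-hand square of (\ref{ecompl}) with $g=\id_M$: it gives $c\circ e=0$ for the connecting morphism $c:w_{\le m}M\to M$, and the long exact sequence attached to the triangle $w_{\le m}M\stackrel{c}{\to}M\to w_{\ge m+1}M$ then shows that $e$ factors through $Z:=w_{\ge m+1}M[-1]\in\cu_{w\ge m}$. Fixing an $m$-weight decomposition of $Z$, Proposition \ref{pbw}(\ref{iwd0}) gives $w_{\le m}Z\in\cu_{w=m}$, and the orthogonality $\cu_{w\le m}\perp\cu_{w\ge m+1}$ forces the factoring map $w_{\le m}M\to Z$ to land in $w_{\le m}Z$. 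Thus $e$ factors through an object $W\in\cu_{w=m}$: write $e=v\circ u$ with $u:w_{\le m}M\to W$ and $v:W\to w_{\le m}M$.

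The concluding step is a formal splitting argument that only needs $\hw$ Karoubian. Put $q:=u\circ v\in\cu(W,W)$; from $e^2=e$ one gets $q^3=q^2$, so $p:=q^2$ is idempotent on $W$. Since $W[-m]\in\obj\hw$ and $\hw$ is Karoubian, $p$ splits as $p=i\circ r$ with $r\circ i=\id$, and its image $W_1$ — a retract of $W$ — lies in $\cu_{w=m}$. Setting $u':=r\circ q\circ u$ and $v':=v\circ q\circ i$ one checks directly that $u'\circ v'=\id_{W_1}$ and $v'\circ u'=e$, so $e$ splits with image $W_1$. As a retract is a direct summand in a triangulated category (see \S\ref{snotata}), we obtain $w_{\le m}M\cong M_0\oplus M_1$ with $M_0:=W_1\in\cu_{w=m}$ and $h$ projecting onto $M_1$; the decomposition of the upper row of (\ref{ecompl}) into the stated direct sum of triangles is then finished exactly as in the Karoubian proof. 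The hard part will be the two-stage factoring of $e$ through the heart (through $Z$ and then through $w_{\le m}Z$) together with the routine but fiddly verification of $u'\circ v'=\id_{W_1}$ and $v'\circ u'=e$; once these are in place, Theorem \ref{tprkw}(\ref{iwildef2}) and Theorem \ref{tdegen}(II) require no new ideas.
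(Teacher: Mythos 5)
Your proof is correct, and while your reduction is exactly the paper's (the paper's proof likewise begins by observing that it suffices to generalize Proposition \ref{pbw}(\ref{icompidemp}), since Theorem \ref{tprkw}(\ref{iwildef2}) uses Karoubianness only through that proposition and Theorem \ref{tdegen}(II) only through Remark \ref{rwildwc}), your proof of the key lemma itself runs along a genuinely different route. The paper works in the idempotent completion $\kar(\cu)$: it splits $h$ formally there (using that $\kar(\cu)$ is triangulated, via \cite{bonspkar} and \cite{bashli}), notes that the complementary summand $M_0$ is then a retract of $w_{\ge m+1}M[-1]$ and hence orthogonal to $\cu_{w\le m-1}$, and applies $\kar(\cu)(-,M_0)$ to the triangle $w_{\le m-1}M\to w_{\le m}M\to M_m$, where $M_m=w_{\ge m}(w_{\le m}M)\in\cu_{w=m}$ by Proposition \ref{pbw}(\ref{iwd0}); this exhibits $M_0$ as a retract of $M_m$, hence as an actual object of $\cu_{w=m}$ since $\hw$ is Karoubian. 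You, by contrast, never leave $\cu$: you factor the complementary idempotent $e=\id-h$ through the heart object $W=w_{\le m}(w_{\ge m+1}M[-1])\in\cu_{w=m}$ (your two-step factorization, which is in effect the dual of Proposition \ref{pbw}(\ref{ifact}) combined with Proposition \ref{pbw}(\ref{iwd0})), and then transfer the splitting from $W$ back to $w_{\le m}M$ by the classical algebraic trick ($q=uv$, $q^3=q^2$, $p=q^2$ idempotent, $u'=rqu$, $v'=vqi$); the identities $v'u'=e$ and $u'v'=\id_{W_1}$ do check out. What each approach buys: the paper's argument is shorter given that the machinery of idempotent extensions is already set up in \S\ref{spkar}, and it isolates the summand $M_0$ directly; yours is entirely elementary and self-contained, needing no statement whatsoever about $\kar(\cu)$ being triangulated, at the price of the routine but fiddly idempotent computations. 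Both proofs ultimately rest on the same two inputs — orthogonality (to place the relevant datum in $\cu_{w=m}$) and Karoubianness of $\hw$ (to split there) — but the mechanism by which the splitting is produced inside $\cu$ is different: formal splitting followed by descent in the paper, versus factorization through the heart followed by splitting transfer in your argument.
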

\begin{proof}

It suffices to verify that the first of the  statements mentioned can be generalized this way, since then the proofs of the other two facts (given above) would extend to the weight-Karoubian case automatically.

The idea is to construct the retracts mentioned in  Proposition \ref{pbw}(\ref{icompidemp}) inside $\kar(\cu)$, and  prove then that they are actually isomorphic to objects of $\cu$.
So, we consider the 
$\kar(\cu)$-decomposition  $w_{\le m} M\cong M_1\bigoplus M_0$ corresponding to $h$. Since $\hw$ is Karoubian, it suffices to verify that $M_0$ is a retract of $M_m=w_{\ge m}(w_{\le m} M)\in \cu_{w=m}$ (see part \ref{ifact} of the proposition).  Since $M_0$ is a retract of $M_{w\ge m+1}[-1]\in \cu_{w\ge m}$, we have $\cu_{w\le m-1}\perp M_0$.
Thus 
if we apply the functor 
$\kar(\cu)(-,M_0)$ to the distinguished triangle 
$ w_{\le m-1}M (=w_{\le m-1}(w_{\le m}M))\to w_{\le m}M\to M_0$  then the resulting long exact sequence  yields that the projection $w_{\le m} M\to M_0$ factors through $M_m$. Certainly, $\id_{M_0}$ possesses this property also.
\end{proof}

\begin{rema}\label{rwkar}

 In  \cite{bonspkar} much more information on  idempotent extensions of $\cu$ such that $w$ extends to them is contained. In particular, the  (essentially) minimal weight-Karoubian extension of $\cu$ was described as follows: $\wkar(\cu)=\lan \obj \kar(\cu^-)\cup \obj \kar(\cu^+) \ra_{\kar(\cu)}$. Since it is minimal,
applying our proposition to it gives
the maximal possible amount of information on the corresponding $\cu$.
\end{rema}

Now we use Proposition \ref{pwkar} for deducing a certain version of Theorem  \ref{tdegen}(II) that would be valid for a not (necessarily) Karoubian $\cu$.

\begin{coro}\label{cwkar}
Let $M\in \obj \cu$. 

I. 
The following conditions are equivalent.
\begin{enumerate}
\item\label{icwk1} $M$ is weight-degenerate (resp. $t(M)\backsim^w 0$). 

\item\label{icwk2}  $M$ can be presented as an extension of a left $w$-degenerate  object of $\cu$ by a  right degenerate   one
(resp. by an element of $\cu'_{w'\le 0}$) in some weight-Karoubian extension $\cu'$ of $\cu$.

\item\label{icwk3} Such a decomposition of $M$ exists in any weight-Karoubian extension of $\cu$.

\item\label{icwk4} $M$ is a $\cu$-retract of an extension of a left degenerate   object of $\cu$ by a  right degenerate   one
(resp. by  an element of $\cu_{w\le 0}$).

\item\label{icwk5} The object $M\bigoplus M[-1]$ is an extension of this sort. 

\end{enumerate}

II. The following conditions are equivalent also.

\begin{enumerate}
\item\label{icwk6} $t(M)\backsim_w 0$. 

\item\label{icwk6r} $t(M)$ is a retract of a complex  concentrated in non-positive  degrees (in $K(\hw)$). 

\item\label{icwk7}  $M$ can be presented as an extension of an element of $\cu'_{w'\ge 0}$ by a  right degenerate  object 
  in some weight-Karoubian extension $\cu'$ of $\cu$.

\item\label{icwk8} Such a decomposition of $M$ exists in any weight-Karoubian extension of $\cu$.

\item\label{icwk9} $M$ is a $\cu$-retract of an extension 
of an element of $\cu_{w\ge 0}$ by a  right degenerate  object.

\item\label{icwk0} The object $M\bigoplus M[1]$ is an extension 
of 
this sort.
\end{enumerate}

\end{coro}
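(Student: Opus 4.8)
The plan is to reduce the whole statement to the weight-Karoubian situation. First I would fix a weight-Karoubian extension $\cu'$ of $\cu$ (it exists by Proposition \ref{ppkar}(2)); inside $\cu'$ the conclusion of Theorem \ref{tdegen}(II) is available by Proposition \ref{pwkar}. Two ``transfer'' observations will be used throughout. (a) A weight Postnikov tower of $M\in\obj\cu$ chosen in $\cu$ is also one in $\cu'$ and $\hw\subset\hw'$, so $t(M)$ is literally the same object of $K(\hw)\subset K(\hw')$ computed in either category; moreover by the last part of Remark \ref{rwc}(\ref{irwc3}) each of $t(M)\backsim 0$, $t(M)\backsim^w 0$, $t(M)\backsim_w 0$ holds in $K(\hw)$ if and only if it holds in $K(\hw')$. (b) For $N\in\obj\cu$ one has $N\in\cu'_{w'\le i}$ (resp. $N\in\cu'_{w'\ge i}$) if and only if $N\in\cu_{w\le i}$ (resp. $N\in\cu_{w\ge i}$): indeed $\cu'_{w'\le i}$ is the Karoubi-closure of $\cu_{w\le i}$ by Proposition \ref{ppkar}(1), and a $\cu$-object that is a $\cu'$-retract of a $\cu_{w\le i}$-object is a $\cu$-retract of it by fullness, hence lies in $\cu_{w\le i}$. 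In particular a $\cu$-object is left (resp. right) $w$-degenerate exactly when it is left (resp. right) $w'$-degenerate.

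Next I would dispatch the easy implications. The passages \ref{icwk5}$\Rightarrow$\ref{icwk4} and \ref{icwk0}$\Rightarrow$\ref{icwk9} are trivial, $M$ being a retract of $M\bigoplus M[\mp 1]$. For \ref{icwk4}$\Rightarrow$\ref{icwk1} (and \ref{icwk9}$\Rightarrow$\ref{icwk6}) I would note that the extension of which $M$ is a retract is already, in $\cu$, weight-degenerate (resp. satisfies $t(-)\backsim^w 0$, $t(-)\backsim_w 0$) by Theorem \ref{tdegen}(I.2) and (I.3), which need no Karoubianness; applying $t$ as a functor to $K_w(\hw)$ (Proposition \ref{pbwcomp}(\ref{iwcfu})), $t(M)$ becomes a retract of a complex that is $\backsim 0$ (resp. $\backsim^w 0$, $\backsim_w 0$), and each of these relations is inherited by retracts, for $\backsim^w$ and $\backsim_w$ because $\id$ being $\backsim_{[-\infty,-1]}$- resp. $\backsim_{[1,+\infty]}$-trivial is preserved under pre- and post-composition. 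The equivalences among the conditions living in $\cu'$ then follow at once from (a), (b) and Theorem \ref{tdegen}(II) applied in $\cu'$: \ref{icwk1}$\Leftrightarrow$\ref{icwk2}$\Leftrightarrow$\ref{icwk3} and \ref{icwk6}$\Leftrightarrow$\ref{icwk7}$\Leftrightarrow$\ref{icwk8}. For Part II I would also record \ref{icwk6}$\Leftrightarrow$\ref{icwk6r}: this is exactly the content of the converse established in Remark \ref{rwc}(\ref{irwc6}), identifying $t(M)\backsim_w 0$ with $t(M)$ being a $K(\hw)$-retract of a complex concentrated in non-positive degrees.

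It remains to close the cycle with \ref{icwk1}$\Rightarrow$\ref{icwk5} (and, identically up to shifts and signs of weights, \ref{icwk6}$\Rightarrow$\ref{icwk0}); this is the heart of the proof. Starting from \ref{icwk1}, $M$ is $w'$-degenerate in $\cu'$, so Theorem \ref{tdegen}(II) yields a $\cu'$-triangle $X\to M\to Y$ with $X$ right $w'$-degenerate (resp. $X\in\cu'_{w'\le 0}$) and $Y$ left $w'$-degenerate. Summing this triangle with its $[-1]$-shift gives a $\cu'$-triangle $X\bigoplus X[-1]\to M\bigoplus M[-1]\to Y\bigoplus Y[-1]$. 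The decisive use of density is that for every $Z\in\obj\cu'$ the object $Z\bigoplus Z[-1]$ is isomorphic to a genuine $\cu$-object: writing $Z$ as the image of an idempotent $e\in\cu'(W,W)=\cu(W,W)$ (fullness) for some $W\in\obj\cu$, one computes $Z\bigoplus Z[-1]\cong\co(\id_W-e)[-1]$, the shift of a cone of a $\cu$-endomorphism. Hence all three vertices of the doubled triangle are isomorphic to $\cu$-objects, and since $\cu$ is a full triangulated subcategory of $\cu'$ a triangle with $\cu$-vertices that is distinguished in $\cu'$ is distinguished in $\cu$. Finally $X\bigoplus X[-1]$ stays right $w'$-degenerate (resp. in $\cu'_{w'\le 0}$) and $Y\bigoplus Y[-1]$ stays left $w'$-degenerate; by (b) their $\cu$-representatives are right $w$-degenerate (resp. in $\cu_{w\le 0}$) and left $w$-degenerate objects of $\cu$, which is precisely \ref{icwk5}.

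The hard part will be this last step: manufacturing, out of an abstract decomposition that lives only in the idempotent extension $\cu'$, an honest distinguished triangle in $\cu$. The doubling $M\rightsquigarrow M\bigoplus M[\mp 1]$ is not a convenience but a necessity, since a single $\cu'$-retract of a $\cu$-object need not be a $\cu$-object whereas $Z\bigoplus Z[\mp 1]$ always is, and one must verify two things at once: that the doubled triangle is genuinely distinguished after transporting its outer vertices to $\cu$ along the isomorphisms above, and that doubling preserves both the degeneracy and the weight bounds ($\cu'_{w'\le 0}$, resp. $\cu'_{w'\ge 0}$) of those vertices. Both verifications rest on the transfer observations (a) and (b), so the conceptual weight of the argument is concentrated in setting those up correctly.
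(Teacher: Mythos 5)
Your proposal is correct and follows essentially the same route as the paper: reduction to a weight-Karoubian extension via Propositions \ref{ppkar} and \ref{pwkar}, transfer of the relations $\backsim_w 0$, $\backsim^w 0$ between $K(\hw)$ and $K(\hw')$ via Proposition \ref{pbwcomp}(\ref{iwcfunct}) and Remark \ref{rwc}(\ref{irwc3},\ref{irwc6}), Theorem \ref{tdegen}(II) applied inside $\cu'$, and, for the key step, the same Thomason-style idempotent-cone trick (your $Z\bigoplus Z[-1]\cong \co(\id_W-e)[-1]$ is exactly the paper's citation of Lemma 2.2 of \cite{thom}) to realize the doubled decomposition as a distinguished triangle of $\cu$. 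Your write-up merely makes explicit two points the paper leaves implicit (the membership transfer $\cu'_{w'\le i}\cap \obj\cu=\cu_{w\le i}$, and that a $\cu'$-distinguished triangle with vertices in $\cu$ is distinguished in $\cu$), which is a presentational rather than a mathematical difference.
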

\begin{proof} We will only prove assertion II; the proof of assertion I is similar.

Certainly, condition \ref{icwk0} of the assertion implies condition \ref{icwk9}. 
\ref{icwk7} follows from \ref{icwk8} since a weight-Karoubian extension $(\cu',w')$ of $\cu$ exists (see Proposition \ref{ppkar}(2)). 

Next we note that (for any  weight-Karoubian extension $\cu'$ of $\cu$ and a fixed $M$)
$t(M)\backsim_w 0$ in $K(\hw)$ if and only if this is true in $K(\hw')$ (see Proposition \ref{pbwcomp}(\ref{iwcfunct}) and 
Remark \ref{rwc}(\ref{irwc3},\ref{irwc6})). Hence  condition \ref{icwk9} implies condition \ref{icwk6}.
Moreover, \ref{icwk6}is equivalent to \ref{icwk6r} by Remark \ref{rwc}(\ref{irwc6}).

Next we fix some $(\cu',w')$  and recall that (the conclusion of)
Theorem  \ref{tdegen}(II) 
can be applied to $\cu'$ according to 
 Proposition \ref{pwkar}(1).
 Hence  condition \ref{icwk6}  implies condition \ref{icwk8}. 

It remains to deduce condition \ref{icwk0} from condition \ref{icwk7}. 
For any $N'\in \obj \cu'$ being the "formal image" of an idempotent $p\in \cu(N,N)$ (for some $N\in \obj \cu$) we have $\co(p)\cong N'\bigoplus N'[1]\in \obj \cu$ (cf. Lemma 2.2 of \cite{thom}). Hence the direct sum of the "decomposition" of $N$ given by condition \ref{icwk7} with its shift by $[1]$ yields condition  \ref{icwk0}.
\end{proof}

\begin{rema}\label{rwkarloc}
Now we describe some consequences of our results that are used in \cite{binters}.

Firstly, if $\cu$ 
is left non-degenerate then part I of our corollary certainly implies that any its $w$-degenerate object is right degenerate. 

Now assume in addition that there is a weight-exact functor $F:\du\to \cu$, where $\du$ is a triangulated category endowed with a weight structure $v$. 
Consider two choices $(M^i_1)$  and  $(M^i_2)$ of $v$-weight complexes of an object $M$ of $\du$. If $F$ kills all $\du$-morphisms from $M^i_1$  and  $M^i_2$ for all $i\in \z$ then we certainly have 
$t_{w}(F(\id_M))=0$. Hence $F(M)$ is a right degenerate object of $\cu$.

We will apply this statement for "computing intersections" of triangulated $\du_1,\du_2\subset \du$. We assume that $v$ restricts to 
  $\du_1$ and $\du_2$ (so, this yields the corresponding choices of $t_{v}(M)$ for $M\in \obj \du_1\cap \obj \du_2$). Thus if we assume in addition that a weight-exact $\du\to\cu$ annihilates all $\du$-morphisms from  $\hu_1$ into $\hu_2$ (where $\hu_i$ are the hearts of the weight structures for $\du_i$) then we will obtain that $F(M)$ is right degenerate in $\cu$ for any $M\in \obj \du_1\cap \obj \du_2$. In particular, if $M$ is also $v$-bounded below then $F(M)=0$. We will use this statement for $F$ being the Verdier localization functor of $\du$ by its Karoubi-closed subcategory $\du_3$; so we obtain that $M$ essentially (i.e., up to an isomorphism) belongs to $ \obj \du_3$ (if $M$ is  $v$-bounded below). 

\end{rema}

\subsection{Some counterexamples in the non-Karoubian case}\label{snkarex}

Our examples will be rather simple; their main "ingredient" is $K(\lvect)$ (the homotopy category of 
 complexes of finite dimensional $L$-vector spaces; here $L$ is an arbitrary fixed field).

\subsubsection{An "indecomposable" weight-degenerate object }\label{sindwd}
 
Now we demonstrate that Theorem  \ref{tdegen}(II) does not extend to arbitrary (i.e., to not necessarily weight-Karoubian) triangulated categories.

Our example will be the full subcategory $\cu$ of $(K^b(\lvect))^3$ consisting of objects whose "total Euler characteristic"  is even (i.e., the sum of dimensions of all cohomology of all the three components of $M=(M_1,M_2,M_3)$ should be even). We define $w$ for $\cu$ as follows: $\cu_{w\le 0}$ consists of those $(M_1,M_2,M_3)$ such that $M_1\cong 0$ and $M_2$ is acyclic in negative degrees; 
$(M_1,M_2,M_3)\in \cu_{w\ge 0}$ if $M_1\cong 0$ and  $M_2$ is acyclic in positive degrees. This is easily seen to be a weight structure (in particular, a weight decomposition of $(M_1,M_2,M_3)$ is $(0,M',M_3)\to  (M_1,M_2,M_3)\to (M_1,M'',0)$, where $M'\to M_2\to M''$ is a stupid weight decomposition of $M_2$ in 
$K^b(\lvect)$ with the corresponding parity of the Euler characteristics). Next, one can easily see that the object $M=(L,0,L)$ (here we put the $L$'s in degree $0$ though the degrees make no difference) is weight-degenerate (since it is weight-degenerate in the obvious extension of $w$ onto its weight-Karoubian extension $\cu'=(K^b(\lvect))^3$; see Proposition \ref{pbwcomp}(\ref{iwcfunct})). Yet $M$ certainly cannot be presented as an extension of a left degenerate  object (i.e., of an object whose last two components are zero) by a 
an element of $\cu_{w\le 0}$ (since the corresponding "decomposition" in $\cu'$ is unique and its "components" have odd "total Euler characteristics"). So, we obtain that first two statements in Theorem  \ref{tdegen}(II) do not extend to $\cu$; for the same reasons, the third statement in loc. cit. does not extend to $\cu$ either (and the same $M$ does not possess the corresponding "decomposition"). 

Looking at the proof Theorem  \ref{tdegen}(II) one immediately obtains the existence of $n>0$ such that 
there does not exist a triangle $X_n\to M \to Y_n$
with $X_n\in \cu_{w\le -n}$, $Y_n\in \cu_{w\ge n}$. Moreover, one can easily check directly that a triangle of this sort does not exist for $n=1$ already.

\subsubsection{A bounded object that  is  without weight $0$ but does not possess a decomposition avoiding this weight}\label{ssskwild}


So, the example above yields that Theorem \ref{tprkw}(\ref{iwildef2}) does not extend to arbitrary $(\cu,w)$ (i.e., that our definition of objects without weights $m,\dots,n$ is not equivalent to Definition 1.10 of \cite{wild} in general). Yet the weight structure is degenerate in this example. Now we give a bounded 
example of the non-equivalence of definitions.
Denote by $\bu$  the category of even-dimensional vector spaces over 
$L$; take $\cu=K^b(\bu)$, $M=L^2\stackrel{\begin{pmatrix}
 1 & 0  \\
 0 & 0 
\end{pmatrix}}{\to} L^2\stackrel{\begin{pmatrix}
 0 & 0  \\
 0 & 1 
\end{pmatrix}}{\to} L^2$; we put these vector spaces in degrees $-1,0$, and $1$, respectively. 
Certainly, the composition $(L^2\to L^2\to 0)\to M\to (0\to L^2\to L^2)$ is zero; so, $M$ is without weight $0$ (see Proposition \ref{pkillw}(\ref{ikw1})).
 Yet $M$ does not possess a  decomposition 
avoiding weight $0$ since the $L$-Euler characteristics of the corresponding
$X$ and $Y$ cannot be odd.

Certainly, this example also yields that 
 decompositions avoiding weights  $m,\dots, n$ do not "lift" from a (weight-)Karoubian $\cu'$ (in our case $\cu'=K^b(\lvect)$; the corresponding weight structure is the stupid one) to $\cu$ (cf. Remark \ref{rwild}(\ref{irwild6})). 

\end{document}